\newtheorem{theo}{Theorem}
\newtheorem{lemm}{Lemma}
\theoremstyle{definition}
\newtheorem{defn}{Definition}
\newtheorem{exmp}{Example}
\newtheorem{asmp}{Assumption}
\newtheorem{rmrk}{Remark}
\newtheorem{fact}{Fact}
\DeclareMathOperator*{\minimize}{minimize}
\DeclareMathOperator*{\argmin}{argmin}
\begin{document}

\begin{frontmatter}

\title{A Convex-Nonconvex Framework for Enhancing\\Minimization Induced Penalties}

\author[1]{Hiroki Kuroda}
\ead{kuroda@vos.nagaokaut.ac.jp}
\affiliation[1]{organization={Department of Information and Management Systems Engineering, Nagaoka University of Technology},
            addressline={1603-1 Kamitomioka}, 
            city={Nagaoka},
            postcode={940-2188}, 
            state={Niigata},
            country={Japan}}

\begin{abstract}
This paper presents a novel framework for nonconvex enhancement of minimization induced (MI) penalties while preserving the overall convexity of associated regularization models. MI penalties enable the adaptation to certain signal structures via minimization, but often underestimate significant components owing to convexity. To overcome this shortcoming, we design a generalized Moreau enhanced minimization induced (GME-MI) penalty by subtracting from the MI penalty its generalized Moreau envelope. While the proposed GME-MI penalty is nonconvex in general, we derive an overall convexity condition for the GME-MI regularized least-squares model. Moreover, we present a proximal splitting algorithm with guaranteed convergence to a globally optimal solution of the GME-MI model under the overall convexity condition. Numerical examples illustrate the effectiveness of the proposed framework.
\end{abstract}

\begin{keyword}
Regularization, optimization, minimization induced penalty, convex-nonconvex strategy, Moreau enhancement, proximal splitting algorithm.
\end{keyword}

\end{frontmatter}
\biboptions{sort&compress}

\section{Introduction}
\label{sect:Introduction}

Regularization is a key technique for general linear inverse problems in signal processing, machine learning, and data science.
In particular, $\ell_1$ regularization is widely used for sparse estimation \cite{Tibshirani:LASSO,Chen:BP,Baraniuk:CS,Candes:CS,Bruckstein:Sparse}.
While computationally tractable,
$\ell_1$ regularization often underestimates large-magnitude components
since the value of the $\ell_1$ penalty increases with magnitude, owing to convexity.
To mitigate this underestimation tendency,
nonconvex penalties, e.g., \cite{Fan:SCAD,Zhang:MCP} among many others,
have been proposed as better approximations of the $\ell_0$ pseudo-norm
that counts the number of nonzero components.
However, nonconvex penalties usually introduce undesirable local minima,
and their over-reliance on initial values of optimization algorithms
is problematic.

Recently, to resolve the aforementioned trade-off, the so-called \emph{convex-nonconvex strategy} \cite{Blake:CPNC,Nikolova:CPNC,Selesnick:MSCO,Mollenhoff:CPNC,Bayram:CPNC,Malek:CPNC,Selesnick:GMC,Yin:CPNC_PCP,Lanza:GMC,Abe:LiGME,AlShabili:SRS,Yukawa:LiMES,Kitahara:LiGME_MRI,Yata:cLiGME},
which uses a nonconvex penalty that preserves convexity of the overall regularization model,
has garnered significant attention.
After the pioneering works of \cite{Blake:CPNC,Nikolova:CPNC} and
recent case studies by \cite{Selesnick:MSCO,Mollenhoff:CPNC,Bayram:CPNC,Malek:CPNC,Selesnick:GMC,Yin:CPNC_PCP},
several general frameworks \cite{Lanza:GMC,Abe:LiGME,AlShabili:SRS,Yukawa:LiMES} have been developed to
remedy the underestimation tendencies of convex \emph{prox-friendly} penalties (i.e., whose proximity operators can be computed with low complexity), including the $\ell_1$ penalty.
More precisely, these frameworks construct nonconvex penalties by subtracting the \emph{generalized Moreau envelopes} of the convex prox-friendly penalties from them,
and provide proximal splitting algorithms to compute optimal solutions of
the associated regularized least-squares models under overall convexity conditions.
Extensions to handle additional convex constraints have also been studied in \cite{Kitahara:LiGME_MRI,Yata:cLiGME}.

As another important direction for improving regularization,
advanced convex penalties have been designed via minimization to incorporate certain signal structures flexibly.
We refer to this class of penalties as \emph{minimization induced (MI)} penalties.
For example, 
to resolve the problem of unknown block partition in block-sparse estimation, which is encountered
in various applications
\cite{Yu:Audio,Cevher:Video,Gao:Video,Hur:Communications,Kuroda_FPC,Kitahara:PAWR_GRSS,Kuroda:PSDEstimation},
the latent optimally partitioned (LOP)-$\ell_2/\ell_1$ penalty \cite{Kuroda:BlockSparse}
has been designed to minimize
the mixed $\ell_2/\ell_1$ norm \cite{Yuan:l12} over the set of candidate block partitions.
Another prominent example is
the total generalized variation (TGV) penalty \cite{Bredies:TGV},
which is defined by minimization for computing and penalizing
the magnitudes of discontinuous jumps of high-order derivatives.
The TGV penalty is a sound high-order extension of the total variation penalty \cite{Rudin:TV}
and is widely used in image processing applications
\cite{Knoll:TGV,Valkonen:TGV,Ferstl:TGV,Ono:VTGV,Langkammer:TGV,Bredies:TGV_Manifold,Bredies:TGV_Review}.
MI penalties are capable of automatically adapting to certain signal structures;
however, analogous to the $\ell_1$ penalty, they often underestimate the magnitudes of significant components.
In particular, the LOP-$\ell_2/\ell_1$ penalty and the TGV penalty
tend to underestimate blocks of large-magnitude components
and large jumps of derivatives, respectively.

Unfortunately, existing convex-nonconvex frameworks
cannot be directly used to remedy the underestimation tendencies of MI penalties.
Even the most general frameworks \cite{Abe:LiGME,Yukawa:LiMES,Yata:cLiGME}
rely on the assumption that
the convex penalty to be enhanced
can be decomposed into a sum of linearly-involved prox-friendly functions.
This assumption does not hold for MI penalties
because the involved minimization makes their proximity operators difficult to compute.
Thus, a major question arises: Can we remedy the underestimation tendencies of MI penalties
while keeping computation of optimal solutions of associated regularization models tractable?

To address this question,
this paper presents a novel convex-nonconvex framework to enhance MI penalties.
The main contributions of this paper are summarized as follows.
\begin{enumerate}
\item After introducing a class of MI penalties defined by minimization of \emph{seed functions}, we construct 
\emph{generalized Moreau enhanced
minimization induced (GME-MI)} penalties by subtracting from MI penalties their generalized Moreau envelopes.
This procedure yields novel nonconvex enhancements of the LOP-$\ell_2/\ell_1$ penalty and the TGV penalty.
\item By applying the proposed GME-MI penalty to regularized least-squares estimation
for a general linear inverse problem,
we design the GME-MI model and present its overall convexity condition, where
a mild technical assumption is imposed on the seed function.
We prove that this assumption automatically holds for the seed functions used to define the LOP-$\ell_2/\ell_1$ penalty and the TGV penalty.
\item We present a proximal splitting algorithm which is guaranteed to converge to a globally optimal solution of the proposed GME-MI model under the overall convexity condition.
We derive the proposed algorithm by 
carefully designing an averaged nonexpansive operator that
characterizes the solution set of the GME-MI model.
The proposed algorithm is composed only of simple operations:
matrix-vector multiplication, the proximity operators of component functions of the seed function,
and the projection onto an additional convex constraint set.
All the assumed conditions are shown to be valid in the scenarios for enhancement of the LOP-$\ell_2/\ell_1$ penalty and the TGV penalty.
\end{enumerate}

\subsection{Comparison With Related Work}
Besides the aforementioned models,
various nonconvexly regularized models
and their optimization algorithms have been proposed in many studies, e.g., \cite{Candes:ReweightL1,Gasso:DC,Ge:Schatten,Chouzenoux:MM,Soubies:CEL0,Woodworth:NC,Quan:NC,Zhang:NC}.
However, even if the nonconvex penalties are combined with convex data-fidelity functions, these studies do not attempt to maintain convexity of the overall regularization models.
As a result, theoretical results shown in, e.g., \cite{Gasso:DC,Ge:Schatten,Chouzenoux:MM,Soubies:CEL0,Woodworth:NC,Quan:NC,Zhang:NC},
without the overall convexity, only guarantee convergence to certain critical points, which might be undesirable local minima, of their models. In contrast, this paper establishes the convergence of the proposed algorithm to a global minimizer of the newly designed GME-MI model
under the overall convexity condition.

Next, we clarify the novelty of this paper compared to the existing convex-nonconvex frameworks
that guarantee the overall convexity of their models.
While the pioneering studies \cite{Blake:CPNC,Nikolova:CPNC} as well as 
recent studies \cite{Selesnick:MSCO,Chen:CPNC,Mollenhoff:CPNC,Parekh:CPNC_Nuclear,Bayram:CPNC,Malek:CPNC,Lanza:CPNC_MM,Selesnick:CPNC_TV,Suzuki:GME} rely on the the existence of strongly convex term,
the seminal study \cite{Selesnick:GMC} presents a systematic way to
maintain convexity of the model defined as the sum of the GME-$\ell_1$ penalty and a general quadratic data-fidelity function.
This approach has been extended in \cite{Yin:CPNC_PCP} to improve stable principal component pursuit.
After these case studies,
the general frameworks \cite{Lanza:GMC,Abe:LiGME,AlShabili:SRS,Yukawa:LiMES}
have established the systematic methods to design their GME-type penalties from convex prox-friendly penalties,
while guaranteeing the overall convexity of their GME-type regularized least-squares models.
Since these studies do not consider enhancement of MI penalties,
this paper designs the new
systematic construction of GME-MI penalties\footnote{GME-MI penalties are substantially different from the (implicit) penalties, e.g., \cite{Cohen:REDPRO,Belkouchi:LearnPen} and references therein, induced by supervised learning, since GME-MI penalties do not need any dataset of original signals.} as nonconvex enhancement of MI penalties,
and presents the overall convexity condition for the GME-MI regularized least-squares model (Theorem \ref{theo:ExpressCostFunc_SumOfConvex}).

The proposed optimization algorithm for the GME-MI model is also novel to the literature. 
It should be noted that, even if the overall convexity is guaranteed,
it remains a major challenge to establish an algorithm with guaranteed convergence to a global minimizer
of the GME-MI model due to its nonsmoothness and involved nonconvex term.
More precisely, while proximal splitting algorithms have become very popular for nonsmooth convex optimization problems,
the standard algorithms (see, e.g., \cite{Combettes:ProxSplit,Condat:ProxSplit}) are not applicable to the GME-MI model since
they are designed to minimize the sum of convex functions.
It is also challenging to extend the special proximal splitting algorithms
\cite{Selesnick:GMC,Yin:CPNC_PCP,Lanza:GMC,Abe:LiGME,Kitahara:LiGME_MRI,AlShabili:SRS,Yata:cLiGME,Yukawa:LiMES}
for their GME models to the GME-MI model
because the existing frameworks heavily rely on the assumption that the convex penalty to be enhanced is prox-friendly. MI penalties are usually defined via difficult convex minimization (see Example \ref{exmp:MI_penalties}),
and thus their proximity operators are difficult to compute.
We address this challenge by nontrivial reformulation of the solution set of the GME-MI model
to the fixed point set of the proposed averaged nonexpansive operator (Theorem \ref{theo:Convergence_OptimizationAlg}).
This reformulation enables us to establish the convergence of the proposed algorithm
based on the fixed point iteration of the proposed operator
to a global minimizer of the GME-MI model.
The proposed operator is carefully designed so that it requires only the proximity operators of component functions of the seed function
(that is used to define the MI penalty),
which can be computed with low computational complexity, e.g.,
in the scenarios of enhancement of the LOP-$\ell_2/\ell_1$ and TGV penalties (see Example \ref{exmp:representation_MIpenalty_forOptimization} and \ref{appendix:ComputeProxOp}).

\subsection{Paper Organization}
The rest of this paper is organized as follows.
Mathematical preliminaries are presented in Section \ref{sect:Preliminaries}.
In Section \ref{sect:design_GME_MI_penalty_model},
we design the GME-MI model and
derive its overall convexity condition.
In Section \ref{sect:OptimizationAlgorithm}, we present the proposed algorithm for computing a globally optimal solution of the GME-MI model.
Numerical examples for the enhancement scenarios of the LOP-$\ell_2/\ell_1$ and TGV penalties
are presented in Section \ref{sect:Experiment}.
Finally, the paper is concluded in Section \ref{sect:Conclusion}.

A preliminary short version of this paper was presented at a conference \cite{Kuroda:GMEMI_conf}.

\section{Preliminaries}
\label{sect:Preliminaries}
\subsection{Notations}
Let $\mathbb{N}$, $\mathbb{R}$, $\mathbb{R}_{+}$, and $\mathbb{R}_{++}$
denote the sets of nonnegative integers,
real numbers, nonnegative real numbers, and positive real numbers, respectively.
For $p \geq 1$,
the $\ell_p$ norm of $\bm{x} \in \mathbb{R}^{n}$ is defined by
$\|\bm{x}\|_p \coloneqq (\sum_{i=1}^{n}|x_i|^{p})^{1/p}$.
The cardinality of a set
$S$ is denoted by $|S|$.
For $\bm{x}\in\mathbb{R}^n$ and $\mathcal{I} \subset \{1,\ldots,n\}$,
the subvector of 
$\bm{x}$ indexed by $\mathcal{I}$ is denoted by
$\bm{x}_{\mathcal{I}} \coloneqq  (x_i)_{i \in \mathcal{I}} \in \mathbb{R}^{|\mathcal{I}|}$.
Let $\mathcal{G} = (\mathcal{I}_k)_{k=1}^{m}$ be a partition of $\{1,\ldots,n\}$, i.e.,
$\bigcup_{k=1}^{m}\mathcal{I}_k = \{1,\ldots,n\}$,
$\mathcal{I}_{k} \neq \varnothing \quad (k = 1,\ldots,m)$,
and $\mathcal{I}_{k_{1}} \cap \mathcal{I}_{k_{2}} = \varnothing \quad (k_{1} \neq k_{2})$.
Then,
we define the mixed $\ell_2/\ell_1$ norm of $\bm{x} \in \mathbb{R}^{n}$ with 
$\mathcal{G}$
by $\|\bm{x}\|_{2,1}^{\mathcal{G}} \coloneqq  \sum_{k=1}^{m}\sqrt{|\mathcal{I}_k|}\|\bm{x}_{\mathcal{I}_k}\|_2$.
For $\bm{L}\in\mathbb{R}^{m \times n}$, its transpose is denoted by $\bm{L}^{\top}$.
For $\bm{L}\in\mathbb{R}^{m \times n}$ and $S \subset \mathbb{R}^{n}$, 
$\bm{L}(S) \coloneqq  \{\bm{L}\bm{x} \mid \bm{x} \in S\}$.
The range space of $\bm{L}\in\mathbb{R}^{m \times n}$ is denoted by
$\mathrm{ran}(\bm{L})\coloneqq \bm{L}(\mathbb{R}^{n})$.
We denote the identity matrix by $\bm{I}$
and the zero matrix by $\bm{O}$.

Throughout this paper, a Euclidean space $\mathbb{R}^n$
is equipped with the standard inner product $\langle\cdot,\cdot \rangle$ and the associated norm $\|\cdot\|$. For $S \subset \mathbb{R}^{n}$,
$\mathrm{ri}(S)$
denotes the relative interior of $S$ (see, e.g., \cite[Fact 6.14(i)]{BC:ConvexAnalysis}).
We define the operator norm of $\bm{L} \in \mathbb{R}^{m \times n}$ by
$\|\bm{L}\|_{\mathrm{op}} \coloneqq  \max \{\|\bm{L}\bm{x}\| \mid \bm{x} \in \mathbb{R}^{n}, \|\bm{x}\| \leq 1\}$.
The positive definiteness and positive semidefiniteness of
a symmetric matrix $\bm{L}\in\mathbb{R}^{n \times n}$
are expressed by $\bm{L} \succ \bm{O}$ and $\bm{L} \succeq \bm{O}$, respectively.
Unless otherwise stated, we regard the product space
$\mathcal{X} \coloneqq  \bigtimes_{k=1}^{m}\mathbb{R}^{n_k}$
as a Euclidean space with the standard inner product and the associated norm.
In part of this paper, with a positive definite matrix $\bm{P}\in\mathbb{R}^{\mathrm{dim}\mathcal{X}\times\mathrm{dim}\mathcal{X}}$,
we use another inner product $\langle\bm{x},\bm{y} \rangle_{\bm{P}} \coloneqq  \langle\bm{x},\bm{P}\bm{y} \rangle$
for $\bm{x}, \bm{y} \in \mathcal{X}$,
and denote its associated norm by $\|\cdot \|_{\bm{P}}$.

\subsection{Tools in Convex Analysis and Optimization}
A function $f\colon\mathbb{R}^{n}\rightarrow \mathbb{R} \cup \{\infty\}$
is said to be proper if its effective domain
$\mathrm{dom}(f) \coloneqq  \{\bm{x} \in \mathbb{R}^{n} \mid f(\bm{x}) < \infty \}$ is nonempty,
lower semicontinuous if its lower level set
$\{\bm{x} \in \mathbb{R}^{n} \mid f(\bm{x}) \leq a \}$ is closed for every $a \in \mathbb{R}$,
and convex if
$f(\alpha \bm{x} + (1-\alpha)\bm{y}) \leq \alpha f(\bm{x}) + (1-\alpha)f(\bm{y})$
for every $\bm{x},\bm{y} \in \mathbb{R}^{n}$ and $\alpha \in (0,1)$.
The set of all proper lower semicontinuous convex functions
from $\mathbb{R}^{n}$ to $\mathbb{R}\cup\{\infty\}$ is denoted by $\Gamma_0(\mathbb{R}^{n})$.
A function $f\colon\mathbb{R}^{n}\rightarrow \mathbb{R}\cup\{\infty\}$
is said to be coercive if $\lim_{\|\bm{x}\|\rightarrow\infty} f(\bm{x}) = \infty$.

The subdifferential of $f \in \Gamma_0(\mathbb{R}^{n})$ at $\bm{x}\in\mathbb{R}^{n}$ is defined by
\begin{align*}
\partial f(\bm{x}) \coloneqq  \{\bm{u} \in \mathbb{R}^{n} \mid (\forall \bm{y} \in \mathbb{R}^{n})\quad \langle\bm{y}-\bm{x},\bm{u} \rangle + f(\bm{x}) \leq f(\bm{y}) \}.
\end{align*}
We have the relation $\bm{x}^{\star} \in  \argmin_{\bm{x}\in\mathbb{R}^{n}} f(\bm{x}) \Leftrightarrow \bm{0} \in \partial f(\bm{x}^{\star})$.
If $f$ is differentiable at $\bm{x}\in\mathbb{R}^{n}$ with the gradient $\nabla f(\bm{x}) \in \mathbb{R}^{n}$, then $\partial f(\bm{x}) = \{\nabla f(\bm{x})\}$.

\begin{fact}[Subdifferential calculus {\cite[Theorem 16.47]{BC:ConvexAnalysis}}]\hfill
\label{fact:SubdifferentialCalculus}
\begin{enumerate}
\item[a)] (Sum and chain rule).
Let $f \in \Gamma_0(\mathbb{R}^{n})$, $g \in \Gamma_0(\mathbb{R}^{m})$, and $\bm{L} \in \mathbb{R}^{m \times n}$.
Assume $\bm{0} \in \mathrm{ri}[\mathrm{dom}(g) - \bm{L}(\mathrm{dom}(f))]$.
Then, for every $\bm{x} \in \mathbb{R}^{n}$,
\begin{align*}
\partial (f + g\circ \bm{L})(\bm{x}) = \partial f(\bm{x}) + \bm{L}^{\top}(\partial g(\bm{L}\bm{x})).
\end{align*}
\item[b)] (Sum rule).
Let $f \in \Gamma_0(\mathbb{R}^{n})$ and $g \in \Gamma_0(\mathbb{R}^{n})$.
Assume $\mathrm{dom}(g) = \mathbb{R}^{n}$.
Then, for every $\bm{x} \in \mathbb{R}^{n}$,
\begin{align*}
\partial (f+g)(\bm{x}) = \partial f(\bm{x}) + \partial g(\bm{x}).
\end{align*}
\item[c)] (Chain rule).
Let $g \in \Gamma_0(\mathbb{R}^{m})$ and $\bm{L} \in \mathbb{R}^{m \times n}$.
Assume $\bm{0} \in \mathrm{ri}\left[\mathrm{dom}(g) - \mathrm{ran}(\bm{L})\right]$.
Then, for every $\bm{x} \in \mathbb{R}^{n}$,
\begin{align*}
\partial (g\circ \bm{L})(\bm{x}) = \bm{L}^{\top}(\partial g(\bm{L}\bm{x})).
\end{align*}
\end{enumerate}
\end{fact}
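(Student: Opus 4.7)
Since this result is quoted from \cite[Theorem 16.47]{BC:ConvexAnalysis}, the plan for a self-contained proof would be to establish part (a) and derive parts (b) and (c) by specialization. I would split the proof of (a) into the forward (easy) and reverse (hard) inclusions, concentrating the main effort in the reverse inclusion where the relative-interior qualification is genuinely needed.

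For the forward inclusion $\partial f(\bm{x}) + \bm{L}^{\mathsf{T}}(\partial g(\bm{L}\bm{x})) \subseteq \partial (f + g\circ \bm{L})(\bm{x})$, my plan is simply to add the two defining subgradient inequalities. Given $\bm{u}\in\partial f(\bm{x})$ and $\bm{v}\in\partial g(\bm{L}\bm{x})$, for every $\bm{y}\in\mathbb{R}^{n}$ one has $f(\bm{y})\geq f(\bm{x})+\langle \bm{y}-\bm{x},\bm{u}\rangle$ and $g(\bm{L}\bm{y})\geq g(\bm{L}\bm{x})+\langle \bm{L}(\bm{y}-\bm{x}),\bm{v}\rangle$, and the adjoint identity $\langle \bm{L}\bm{z},\bm{v}\rangle=\langle \bm{z},\bm{L}^{\mathsf{T}}\bm{v}\rangle$ lets me sum these into the subgradient inequality for $\bm{u}+\bm{L}^{\mathsf{T}}\bm{v}$. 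No qualification condition is required in this direction, and coercivity or lower semicontinuity play no role.

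The reverse inclusion is the main obstacle. My plan is to route it through Fenchel--Rockafellar duality: if $\bm{w}\in\partial(f+g\circ\bm{L})(\bm{x})$, then $\bm{x}$ minimizes $\bm{y}\mapsto f(\bm{y})+g(\bm{L}\bm{y})-\langle\bm{y},\bm{w}\rangle$, and the dual problem in terms of $f^{*}$ and $g^{*}$ should exhibit a dual multiplier $\bm{v}^{\star}$ that simultaneously certifies $\bm{v}^{\star}\in\partial g(\bm{L}\bm{x})$ and $\bm{w}-\bm{L}^{\mathsf{T}}\bm{v}^{\star}\in\partial f(\bm{x})$, yielding the required decomposition. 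The delicate point is proving strong duality with attainment of the dual; this is precisely where the assumption $\bm{0}\in\mathrm{ri}[\mathrm{dom}(g)-\bm{L}(\mathrm{dom}(f))]$ enters, through a Moreau--Rockafellar-style relative-interior separation argument that prevents the separating hyperplane from degenerately containing both perturbation sets and thus rules out a duality gap. I expect essentially all of the technical difficulty of the proof to reside in this separation step.

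Finally, I would deduce (b) and (c) by specializing (a). For (b), choosing $\bm{L}=\bm{I}$ together with $\mathrm{dom}(g)=\mathbb{R}^{n}$ makes $\mathrm{dom}(g)-\mathrm{dom}(f)=\mathbb{R}^{n}$, so the relative-interior qualification holds automatically and (a) collapses to the additive rule. For (c), taking $f\equiv 0$ gives $\mathrm{dom}(f)=\mathbb{R}^{n}$ and $\partial f\equiv\{\bm{0}\}$, so $\bm{L}(\mathrm{dom}(f))=\mathrm{ran}(\bm{L})$, exactly matching the qualification in the statement, and (a) reduces to the claimed chain rule.
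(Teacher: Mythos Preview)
The paper does not prove this statement at all: it is recorded as a \emph{Fact} with a direct citation to \cite[Theorem 16.47]{BC:ConvexAnalysis}, and no argument is given in the paper. Your proposal therefore goes well beyond what the paper does. As a self-contained plan it is sound: the forward inclusion in (a) is indeed immediate from the subgradient inequalities, the reverse inclusion via Fenchel--Rockafellar duality with the relative-interior qualification is the standard route, and the specializations $\bm{L}=\bm{I}$ for (b) and $f\equiv 0$ for (c) correctly recover the stated hypotheses.
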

\noindent
The conjugate of $f \in \Gamma_0(\mathbb{R}^{n})$ is defined by
\begin{align}
\label{eq:def:conjugate_convexFunc}
f^{*}\colon\mathbb{R}^{n}\rightarrow\mathbb{R}\cup\{\infty\}\colon\bm{u} \mapsto \sup_{\bm{x} \in \mathbb{R}^{n}}\left[\langle \bm{x},\bm{u} \rangle-f(\bm{x})\right],
\end{align}
and $f^{*} \in \Gamma_0(\mathbb{R}^{n})$ \cite[Corollary  13.38]{BC:ConvexAnalysis}.
For any $f \in \Gamma_0(\mathbb{R}^{n})$,
$(\partial f)^{-1} = \partial f^{*}$ \cite[Theorem 16.29]{BC:ConvexAnalysis}, i.e.,
\begin{align}
\label{eq:relation_subdifferential_conjugate}
\bm{u} \in \partial f(\bm{x}) \Leftrightarrow \bm{x} \in \partial f^{*}(\bm{u}).
\end{align}
The proximity operator of $f \in \Gamma_0(\mathbb{R}^{n})$ is defined by
\begin{align*}
\mathrm{prox}_{f}\colon\mathbb{R}^{n}\rightarrow\mathbb{R}^{n}\colon\bm{x} \mapsto \argmin_{\bm{y} \in \mathbb{R}^{n}}
\left[f(\bm{y}) + \frac{1}{2}\|\bm{x}-\bm{y}\|^2\right].
\end{align*}
Let $\mathrm{Id}$ denote the identity operator.
For any $f \in \Gamma_0(\mathbb{R}^{n})$, $\mathrm{prox}_{f} = (\mathrm{Id}+\partial f)^{-1}$
\cite[Proposition 16.44]{BC:ConvexAnalysis}, i.e.,
\begin{align}
\label{eq:relationProxSubdifferential}
\bm{p} = \mathrm{prox}_{f}(\bm{x}) \Leftrightarrow \bm{x} \in (\mathrm{Id}+\partial f)(\bm{p}).
\end{align}
We say that $f \in \Gamma_0(\mathbb{R}^{n})$ is prox-friendly if
$\mathrm{prox}_{\gamma f}$ is available as a computational tool for any $\gamma \in \mathbb{R}_{++}$.
Note that, if $f \in \Gamma_0(\mathbb{R}^{n})$ is prox-friendly,
then $f^{*}$ is also prox-friendly because the following relation holds
for any $\gamma \in \mathbb{R}_{++}$ and $\bm{x} \in \mathbb{R}^{n}$ \cite[Theorem 14.3(ii)]{BC:ConvexAnalysis}:
\begin{align*}
\mathrm{prox}_{\gamma f^{*}}(\bm{x}) = \bm{x} - \gamma \mathrm{prox}_{\frac{1}{\gamma}f}
\left(\bm{x}/\gamma\right).
\end{align*}
The indicator function of a nonempty closed convex set $C \subset \mathbb{R}^{n}$ is defined by
\begin{align*}
\iota_{C}\colon\mathbb{R}^{n}\rightarrow\mathbb{R}\cup \{\infty \}
\colon\bm{x}\mapsto
\begin{cases}
0, & \text{\rm if }\bm{x} \in C;\\
\infty, &\text{\rm otherwise},
\end{cases}
\end{align*}
and $\iota_{C} \in \Gamma_0(\mathbb{R}^{n})$.
For any $\gamma \in \mathbb{R}_{++}$,
the proximity operator of $\gamma\iota_{C}$ reduces to the projection onto $C$, i.e.,
for every $\bm{x} \in \mathbb{R}^{n}$,
\begin{align*}
\mathrm{prox}_{\gamma\iota_{C}}(\bm{x}) = P_{C}(\bm{x}) \coloneqq 
\argmin_{\bm{y} \in C}\|\bm{x}-\bm{y}\|.
\end{align*}

\subsection{Fixed Point Theory of Nonexpansive Operators}
Let $(\mathcal{H},\langle\cdot,\cdot\rangle_{\mathcal{H}},\|\cdot\|_{\mathcal{H}})$ be a finite-dimensional real Hilbert space.
The fixed point set of an operator $T\colon\mathcal{H}\rightarrow\mathcal{H}$
is denoted by $\mathrm{Fix}(T) \coloneqq  \{\bm{x}\in\mathcal{H}\mid T(\bm{x}) = \bm{x}\}$.
An operator $T\colon\mathcal{H}\rightarrow\mathcal{H}$ is called nonexpansive if
\begin{align*}
(\forall \bm{x}\in\mathcal{H})(\forall \bm{y}\in\mathcal{H}) \quad \|T(\bm{x})-T(\bm{y}) \|_{\mathcal{H}} \leq \|\bm{x}-\bm{y}\|_{\mathcal{H}}.
\end{align*}
A nonexpansive operator
$T\colon\mathcal{H}\rightarrow\mathcal{H}$ is $\alpha$-averaged if there exist $\alpha \in (0,1)$
and a nonexpansive operator $R\colon\mathcal{H}\rightarrow\mathcal{H}$ such that
$T = (1-\alpha)\mathrm{Id} + \alpha R$, where $\mathrm{Id}$ is the identity operator.
\begin{fact}[Composition of averaged operators
{\cite[Proposition 4.44]{BC:ConvexAnalysis}}]
\label{fact:CompositionAveragedOp}
Let $T_{i}\colon\mathcal{H}\rightarrow\mathcal{H}$ be $\alpha_{i}$-averaged nonexpansive for each $i = 1, 2$.
Then, $T_{1}\circ T_{2}$ is $\alpha$-averaged nonexpansive with
$\alpha = (\alpha_{1}+\alpha_{2}-2\alpha_{1}\alpha_{2})/(1-\alpha_{1}\alpha_{2}) \in (0,1)$.
\end{fact}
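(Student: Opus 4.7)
The plan is to use the standard quadratic characterization of $\alpha$-averagedness and to chain the resulting inequalities for $T_2$ and $T_1$ along the trajectory $\bm{x} \mapsto T_2\bm{x} \mapsto T_1 T_2 \bm{x}$. First, starting from the definition, I would rederive the equivalent fact that $T\colon\mathcal{H}\to\mathcal{H}$ is $\alpha$-averaged nonexpansive if and only if for all $\bm{x},\bm{y}\in\mathcal{H}$,
\[
\|T\bm{x} - T\bm{y}\|_{\mathcal{H}}^2 + \frac{1-\alpha}{\alpha}\|(\mathrm{Id}-T)\bm{x} - (\mathrm{Id}-T)\bm{y}\|_{\mathcal{H}}^2 \leq \|\bm{x}-\bm{y}\|_{\mathcal{H}}^2,
\]
which follows by writing $T = (1-\alpha)\mathrm{Id} + \alpha R$ with $R$ nonexpansive and expanding $\|R\bm{x}-R\bm{y}\|_{\mathcal{H}}^2 \leq \|\bm{x}-\bm{y}\|_{\mathcal{H}}^2$ after substituting $R = \mathrm{Id} + \alpha^{-1}(T-\mathrm{Id})$.

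Next, I would apply this quadratic bound to $T_2$ at $(\bm{x},\bm{y})$ and to $T_1$ at $(T_2\bm{x}, T_2\bm{y})$. Setting $\beta_i \coloneqq (1-\alpha_i)/\alpha_i$, $\bm{u} \coloneqq (\mathrm{Id}-T_2)\bm{x} - (\mathrm{Id}-T_2)\bm{y}$, and $\bm{v} \coloneqq (\mathrm{Id}-T_1)(T_2\bm{x}) - (\mathrm{Id}-T_1)(T_2\bm{y})$, substitution of one inequality into the other gives
\[
\|T_1 T_2 \bm{x} - T_1 T_2 \bm{y}\|_{\mathcal{H}}^2 \leq \|\bm{x}-\bm{y}\|_{\mathcal{H}}^2 - \beta_2 \|\bm{u}\|_{\mathcal{H}}^2 - \beta_1 \|\bm{v}\|_{\mathcal{H}}^2.
\]
Since the telescoping identity $(\mathrm{Id} - T_1 T_2)\bm{x} - (\mathrm{Id} - T_1 T_2)\bm{y} = \bm{u} + \bm{v}$ holds, proving the $\alpha$-averagedness of $T_1 \circ T_2$ via the same characterization reduces to identifying the largest $\beta = (1-\alpha)/\alpha$ for which $\beta_2\|\bm{u}\|_{\mathcal{H}}^2 + \beta_1\|\bm{v}\|_{\mathcal{H}}^2 \geq \beta\|\bm{u}+\bm{v}\|_{\mathcal{H}}^2$ holds for all $\bm{u},\bm{v}\in\mathcal{H}$.

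To optimize $\beta$, I would expand this inequality as the quadratic form $(\beta_2-\beta)\|\bm{u}\|_{\mathcal{H}}^2 - 2\beta\langle\bm{u},\bm{v}\rangle_{\mathcal{H}} + (\beta_1-\beta)\|\bm{v}\|_{\mathcal{H}}^2 \geq 0$ and, bounding the cross term via Cauchy--Schwarz in its worst direction, reduce matters to positive semidefiniteness of the $2\times 2$ matrix with diagonal $(\beta_2-\beta,\beta_1-\beta)$ and off-diagonal $-\beta$. The diagonal and determinant conditions give $\beta \leq \beta_1\beta_2/(\beta_1+\beta_2)$; taking equality and converting via $\alpha = 1/(1+\beta)$ together with $\beta_i = (1-\alpha_i)/\alpha_i$ yields exactly $\alpha = (\alpha_1+\alpha_2-2\alpha_1\alpha_2)/(1-\alpha_1\alpha_2)$, and $\alpha \in (0,1)$ for $\alpha_1,\alpha_2\in(0,1)$ follows by a short direct check.

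The main obstacle will be this last step: the two successive averagedness bounds do not combine by a naive Cauchy--Schwarz applied to $\bm{u}+\bm{v}$, which would yield a suboptimal constant; the sharp $\alpha$ must instead be extracted from the discriminant of the induced quadratic form in $(\|\bm{u}\|_{\mathcal{H}},\|\bm{v}\|_{\mathcal{H}})$. This is precisely the mechanism that makes $1-\alpha_1\alpha_2$, rather than $\alpha_1+\alpha_2$, govern the denominator of the final formula.
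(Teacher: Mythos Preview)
Your argument is correct and is essentially the standard proof of this result (the one given in the cited reference \cite[Proposition~4.44]{BC:ConvexAnalysis}, building on the quadratic characterization of averagedness in \cite[Proposition~4.35]{BC:ConvexAnalysis}). Note, however, that the paper itself does not prove this statement: it is recorded as a \emph{Fact} with a direct citation to Bauschke--Combettes, and is then used as a black box in the proof of Theorem~\ref{theo:Convergence_OptimizationAlg}. So there is no ``paper's own proof'' to compare against---you have simply supplied the textbook argument that the paper defers to the literature.
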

\begin{fact}[Krasnosel'ski\u{\i}-Mann Iteration {\cite[Theorem 5.15]{BC:ConvexAnalysis}}]
\label{fact:KM_Iteration}
Let $T\colon\mathcal{H}\rightarrow\mathcal{H}$ be a nonexpansive operator with $\mathrm{Fix}(T) \neq \varnothing$.
For any $(\mu_k)_{k\in\mathbb{N}} \subset [0,1]$ satisfying $\sum_{k\in\mathbb{N}}\mu_k(1-\mu_k) = \infty$
and an initial point $\bm{x}_0 \in \mathcal{H}$, the sequence $(\bm{x}_k)_{k\in\mathbb{N}}$ generated by
\begin{align*}
(\forall k \in \mathbb{N})\quad \bm{x}_{k+1} = (1-\mu_k)\bm{x}_{k} + \mu_k T(\bm{x}_{k})
\end{align*}
converges\footnote{Since this paper focuses on finite-dimensional real Hilbert spaces, weak convergence and strong convergence are equivalent.} to a point in $\mathrm{Fix}(T)$. In particular, if $T$ is $\alpha$-averaged nonexpansive,
the sequence $(\bm{x}_k)_{k\in\mathbb{N}}$ generated by
\begin{align*}
(\forall k \in \mathbb{N})\quad \bm{x}_{k+1} = T(\bm{x}_{k})
\end{align*}
converges to a point in $\mathrm{Fix}(T)$.
\end{fact}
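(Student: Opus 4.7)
\emph{Plan.} The classical strategy is to establish Fejér monotonicity of $(\bm{x}_k)$ with respect to $\mathrm{Fix}(T)$, deduce that the residuals $\|T(\bm{x}_k)-\bm{x}_k\|_{\mathcal{H}}$ vanish along a subsequence, then use continuity of $T$ (the finite-dimensional surrogate of the demiclosedness principle for $\mathrm{Id}-T$) to exhibit a cluster point in $\mathrm{Fix}(T)$, and finally combine Fejér monotonicity with that cluster point to force the whole sequence to converge.

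\emph{Key identity and one-step estimate.} The engine of the proof is the convex-combination identity
\begin{equation*}
\|(1-\mu)\bm{a}+\mu\bm{b}\|_{\mathcal{H}}^{2}=(1-\mu)\|\bm{a}\|_{\mathcal{H}}^{2}+\mu\|\bm{b}\|_{\mathcal{H}}^{2}-\mu(1-\mu)\|\bm{a}-\bm{b}\|_{\mathcal{H}}^{2},
\end{equation*}
valid for all $\bm{a},\bm{b}\in\mathcal{H}$ and $\mu\in[0,1]$. Fixing any $\bm{z}\in\mathrm{Fix}(T)$ and applying this with $\bm{a}=\bm{x}_{k}-\bm{z}$, $\bm{b}=T(\bm{x}_{k})-\bm{z}$, $\mu=\mu_{k}$, together with nonexpansiveness $\|T(\bm{x}_{k})-\bm{z}\|_{\mathcal{H}}\leq\|\bm{x}_{k}-\bm{z}\|_{\mathcal{H}}$, yields
\begin{equation*}
\|\bm{x}_{k+1}-\bm{z}\|_{\mathcal{H}}^{2}\leq\|\bm{x}_{k}-\bm{z}\|_{\mathcal{H}}^{2}-\mu_{k}(1-\mu_{k})\|T(\bm{x}_{k})-\bm{x}_{k}\|_{\mathcal{H}}^{2}.
\end{equation*}
This one inequality simultaneously gives Fejér monotonicity (hence boundedness of $(\bm{x}_{k})$) and, after telescoping, the summability $\sum_{k}\mu_{k}(1-\mu_{k})\|T(\bm{x}_{k})-\bm{x}_{k}\|_{\mathcal{H}}^{2}\leq\|\bm{x}_{0}-\bm{z}\|_{\mathcal{H}}^{2}<\infty$.

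\emph{Extracting a fixed point and promoting to full convergence.} Since $\sum_{k}\mu_{k}(1-\mu_{k})=\infty$, the summability above forces $\liminf_{k}\|T(\bm{x}_{k})-\bm{x}_{k}\|_{\mathcal{H}}=0$. Along a subsequence on which this liminf is attained, boundedness in the finite-dimensional space $\mathcal{H}$ allows a further subsequence $\bm{x}_{k_{j}}\to\bm{x}^{\star}$. Continuity of the nonexpansive operator $T$ then gives $T(\bm{x}^{\star})=\bm{x}^{\star}$, i.e., $\bm{x}^{\star}\in\mathrm{Fix}(T)$. Re-applying the Fejér inequality with $\bm{z}=\bm{x}^{\star}$ shows that $\|\bm{x}_{k}-\bm{x}^{\star}\|_{\mathcal{H}}$ is nonincreasing; combined with $\bm{x}_{k_{j}}\to\bm{x}^{\star}$ this forces the entire sequence $\bm{x}_{k}\to\bm{x}^{\star}$.

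\emph{The averaged case and the main obstacle.} For the $\alpha$-averaged claim I would write $T=(1-\alpha)\mathrm{Id}+\alpha R$ with $R$ nonexpansive and note $\mathrm{Fix}(R)=\mathrm{Fix}(T)$, so iterating $T$ is exactly the Krasnosel'ski\u{\i}--Mann iteration for $R$ with constant parameter $\mu_{k}\equiv\alpha\in(0,1)$, for which $\sum_{k}\alpha(1-\alpha)=\infty$ trivially, reducing to the first part. The only genuinely subtle step is upgrading ``residuals vanish along a subsequence'' to ``a cluster point lies in $\mathrm{Fix}(T)$''; in the present finite-dimensional setting this is disposed of by plain continuity of $T$, whereas in general Hilbert spaces it is the demiclosedness principle for $\mathrm{Id}-T$ that does the work and constitutes the real heart of the theorem.
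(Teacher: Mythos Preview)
Your proof is correct and follows the standard Fej\'er-monotonicity argument. Note, however, that the paper does not supply its own proof of this statement: it is recorded as a \emph{Fact} with a citation to \cite[Theorem~5.15]{BC:ConvexAnalysis} and is used as a black box in the convergence analysis of Algorithm~\ref{alg:Optimization_GME_MI}. Your argument is essentially the textbook proof from that reference, specialized to finite dimensions (where compactness and continuity replace the demiclosedness principle), so there is nothing to compare against and no discrepancy to flag.

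One minor refinement you may wish to incorporate: the residual sequence $\|T(\bm{x}_k)-\bm{x}_k\|_{\mathcal{H}}$ is in fact nonincreasing (by the triangle inequality and nonexpansiveness, since $\bm{x}_{k+1}-\bm{x}_k=\mu_k(T(\bm{x}_k)-\bm{x}_k)$ and $T(\bm{x}_k)-\bm{x}_{k+1}=(1-\mu_k)(T(\bm{x}_k)-\bm{x}_k)$), so your $\liminf$ is actually a full limit. This is not needed for the argument to go through as you wrote it, but it slightly streamlines the passage to a cluster point.
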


A set-valued operator $A\colon\mathcal{H}\rightarrow2^{\mathcal{H}}$
is called monotone if
\begin{align*}
(\forall \bm{x}\in\mathcal{H})(\forall \bm{y}\in\mathcal{H})(\forall \bm{u}\in A(\bm{x}))(\forall \bm{v}\in A(\bm{y}))\quad
\langle\bm{x}-\bm{y},\bm{u}-\bm{v}\rangle_{\mathcal{H}} \geq 0.
\end{align*}
A monotone operator $A\colon\mathcal{H}\rightarrow2^{\mathcal{H}}$ is maximally monotone if
there exists no monotone operator whose graph strictly contains the graph of $A$.
A set-valued operator $A\colon\mathcal{H}\rightarrow2^{\mathcal{H}}$ is maximally monotone
if and only if its resolvent $(\mathrm{Id}+A)^{-1}\colon\mathcal{H}\rightarrow2^{\mathcal{H}}$ is single-valued and 
$1/2$-averaged nonexpansive
\cite[Proposition 23.10]{BC:ConvexAnalysis}.
An important example of a maximally monotone operator
is the subdifferential $\partial f$ of $f \in \Gamma_0(\mathcal{H})$
\cite[Theorem 20.25]{BC:ConvexAnalysis}.
The proximity operator $\mathrm{prox}_{f}$
is the resolvent of $\partial f$ (see \eqref{eq:relationProxSubdifferential}),
and thus is $1/2$-averaged nonexpansive.

\section{Design of GME-MI Model}
\label{sect:design_GME_MI_penalty_model}
We consider a general linear inverse problem of estimating an original signal $\bm{x}_{\text{\rm org}} \in \mathcal{C}$
from the observed vector $\bm{y} \coloneqq  \bm{A}\bm{x}_{\text{\rm org}} +  \bm{\varepsilon}\in \mathbb{R}^{d}$,
where $\mathcal{C} \subset \mathbb{R}^{n}$ is a known nonempty closed convex constraint set,
$\bm{A}\in \mathbb{R}^{d \times n}$ is a known measurement matrix, and
$\bm{\varepsilon} \in \mathbb{R}^{d}$ is an unknown noise vector.

The proposed GME-MI model is designed as follows.
In Section \ref{sect:class_MI_penalty}, we introduce a class
of MI penalties, which covers the important existing examples: the LOP-$\ell_2/\ell_1$ and TGV penalties.
Then, in Section \ref{sect:design_GMEMI_penalty}, to remedy the underestimation tendencies of MI penalties,
we design the GME-MI
penalties as differences between MI penalties and their generalized Moreau envelopes.
Finally, in Section \ref{sect:design_GMEMI_Regularization_Model},
we design the GME-MI model by applying the GME-MI penalty to regularized least-squares estimation,
and derive its overall convexity condition.

\subsection{Class of MI Penalties}
\label{sect:class_MI_penalty}
Before defining the MI penalty function for $\bm{u} = \bm{L}\bm{x} \in \mathbb{R}^{m}$
with a known matrix $\bm{L}\in\mathbb{R}^{m \times n}$,
we introduce a \emph{seed function} $\varphi \in\Gamma_0(\mathbb{R}^{m}\times\mathbb{R}^{l})$
satisfying the following mild condition.
\begin{asmp}\label{Assumption:phi_Coresive_Proper_BoundedBelow}
Assume that $\varphi$ is bounded below, and $\varphi(\bm{u},\cdot)$ is proper and coercive
for every $\bm{u} \in \mathbb{R}^{m}$.
\end{asmp}
\begin{lemm}\label{lemm:nonempty_partialmin_phi}
For every $\bm{u} \in \mathbb{R}^{m}$, $ \argmin_{\bm{\sigma}\in\mathbb{R}^{l}}\varphi(\bm{u},\bm{\sigma}) \neq \varnothing$.
\end{lemm}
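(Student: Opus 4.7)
The plan is to reduce the claim to the standard Weierstrass-type existence theorem: a proper, lower semicontinuous, coercive function on a finite-dimensional Euclidean space attains its infimum.

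First I would fix an arbitrary $\bm{u} \in \mathbb{R}^{m}$ and define the section $\psi \colon \mathbb{R}^{l} \to \mathbb{R}\cup\{\infty\}$ by $\psi(\bm{\sigma}) \coloneqq \varphi(\bm{u},\bm{\sigma})$. The goal becomes showing that $\argmin_{\bm{\sigma}\in\mathbb{R}^{l}} \psi(\bm{\sigma}) \neq \varnothing$. Assumption~\ref{Assumption:phi_CoresiveAndProper} immediately supplies properness and coercivity of $\psi$, so the only remaining ingredient is lower semicontinuity.

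For lower semicontinuity, I would verify that each lower level set $\{\bm{\sigma}\in\mathbb{R}^{l} \mid \psi(\bm{\sigma}) \leq a\}$ is closed. Since $\varphi \in \Gamma_0(\mathbb{R}^{m}\times\mathbb{R}^{l})$, its level set $\{(\bm{u}',\bm{\sigma})\in\mathbb{R}^{m}\times\mathbb{R}^{l} \mid \varphi(\bm{u}',\bm{\sigma})\leq a\}$ is closed in the product topology, and intersecting with the closed affine slice $\{\bm{u}\}\times\mathbb{R}^{l}$ and projecting onto the second factor yields the lower level set of $\psi$, which is therefore closed. Equivalently, lower semicontinuity of $\varphi$ on the product space transfers to lower semicontinuity of every partial function. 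Convexity of $\psi$ also transfers from $\varphi$, although this is not strictly needed for the existence of a minimizer.

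Finally, since $\psi$ is proper, lower semicontinuous, and coercive on the finite-dimensional Euclidean space $\mathbb{R}^{l}$, any sublevel set at a height strictly greater than $\inf \psi$ is nonempty, closed, and bounded, hence compact. Taking a minimizing sequence inside such a compact sublevel set and extracting a convergent subsequence, the lower semicontinuity of $\psi$ guarantees that the limit attains the infimum. This yields $\argmin_{\bm{\sigma}\in\mathbb{R}^{l}} \varphi(\bm{u},\bm{\sigma}) \neq \varnothing$, as desired. There is no real obstacle here; the lemma is essentially a bookkeeping statement that checks Assumption~\ref{Assumption:phi_CoresiveAndProper} meshes with the ambient regularity $\varphi \in \Gamma_0(\mathbb{R}^{m}\times\mathbb{R}^{l})$ to invoke Weierstrass' theorem.
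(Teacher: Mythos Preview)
Your proposal is correct and follows essentially the same approach as the paper: both argue that the section $\varphi(\bm{u},\cdot)$ is proper, lower semicontinuous (inherited from $\varphi \in \Gamma_0(\mathbb{R}^{m}\times\mathbb{R}^{l})$), and coercive, then invoke the Weierstrass-type existence result, which the paper cites as \cite[Proposition 11.15]{BC:ConvexAnalysis} rather than spelling out. One cosmetic point: you reuse the symbol $\psi$ for the section, which clashes with the paper's notation for the MI penalty, so pick a different letter when writing this up.
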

\begin{proof}
The properness of $\varphi(\bm{u},\cdot)$
and $\varphi \in \Gamma_0(\mathbb{R}^{m}\times\mathbb{R}^{l})$
imply $\varphi(\bm{u},\cdot) \in \Gamma_0(\mathbb{R}^{l})$, and thus its coercivity yields
the claim by \cite[Proposition 11.15]{BC:ConvexAnalysis}.
\end{proof}

\begin{defn}
\label{defn:MI_Penalty}
Given a seed function $\varphi \in\Gamma_0(\mathbb{R}^{m}\times\mathbb{R}^{l})$
satisfying Assumption \ref{Assumption:phi_Coresive_Proper_BoundedBelow}, we define the MI penalty by
\begin{align}
\label{eq:def:MI_penalty}
\psi\colon\mathbb{R}^{m}\rightarrow\mathbb{R}\colon\bm{u} \mapsto \min_{\bm{\sigma}\in\mathbb{R}^{l}}\varphi(\bm{u},\bm{\sigma}).
\end{align}
\end{defn}
\begin{lemm}\label{lemm:psi_lsc_convex_fullDomain}
$\mathrm{dom}(\psi) = \mathbb{R}^{m}$,
$\psi \in \Gamma_0(\mathbb{R}^{m})$,
and $\psi$ is bounded below.
\end{lemm}
\begin{proof}
Lemma \ref{lemm:nonempty_partialmin_phi} implies
$\min_{\bm{\sigma}\in\mathbb{R}^{l}}\varphi(\bm{u},\bm{\sigma}) \in \mathbb{R}$ for every $\bm{u} \in \mathbb{R}^{m}$,
and thus we have $\mathrm{dom}(\psi) = \mathbb{R}^{m}$.
Since $\psi$ is defined by the partial minimization of $\varphi$,
the convexity of $\psi$ follows from 
the convexity of $\varphi$ by \cite[Proposition 8.35]{BC:ConvexAnalysis}.
The convexity of $\psi$ and $\mathrm{dom}(\psi) = \mathbb{R}^{m}$ imply that
$\psi$ is continuous by \cite[Corollary 8.40]{BC:ConvexAnalysis}.
Thus, we have $\psi \in \Gamma_0(\mathbb{R}^{m})$.
Since $\varphi$ is bounded below by Assumption \ref{Assumption:phi_Coresive_Proper_BoundedBelow},
so is $\psi$.
\end{proof}

The introduced class of MI penalties covers the important existing penalties given in Example \ref{exmp:MI_penalties} below,
where Assumption \ref{Assumption:phi_Coresive_Proper_BoundedBelow} is automatically satisfied
(see \ref{proof:validity_phi_CoresiveAndProper}).

\begin{exmp}[Instances of MI penalties]\hfill
\label{exmp:MI_penalties}
\begin{enumerate}
\item[a)] (LOP-$\ell_2/\ell_1$)
Consider the problem of estimating $\bm{x}_{\text{\rm org}}$ that
is block-sparse in the range space of $\bm{W}  \in \mathbb{R}^{m\times n}$
with an unknown block partition, which arises in many applications \cite{Yu:Audio,Cevher:Video,Gao:Video,Hur:Communications,Kuroda_FPC,Kitahara:PAWR_GRSS,Kuroda:PSDEstimation}.
To leverage block-sparsity without the knowledge of concrete block partition,
the LOP-$\ell_2/\ell_1$ penalty \cite{Kuroda:BlockSparse}
is designed as a tight convex approximation of the combinatorial penalty that minimizes the mixed $\ell_2/\ell_1$ norm
of $\bm{W}\bm{x}$ over the set of possible block partitions.
By setting $\bm{L} = \bm{W}$,
the LOP-$\ell_2/\ell_1$ penalty
is reproduced by \eqref{eq:def:MI_penalty} for $\varphi_{\alpha}^{\text{\rm LOP}}\in\Gamma_0(\mathbb{R}^{m}\times\mathbb{R}^{m})$ defined by\footnote{While extension to complex-valued signals is straightforward, we focus on the real-valued signals to simplify the presentation.}
\begin{align}
\label{eq:def:seed_function_LOPl2l1}
\varphi_{\alpha}^{\text{\rm LOP}}(\bm{u},\bm{\sigma}) \coloneqq  
\sum_{i=1}^{m}h(u_i,\sigma_i)
+\iota_{B_1^{\alpha}}(\bm{D}\bm{\sigma}),
\end{align}
where $h \in\Gamma_0(\mathbb{R}\times\mathbb{R})$ is the \emph{perspective} \cite{Combettes:Perspective} of
$(1/2)u^2+1/2$, $\iota_{B_1^{\alpha}}\in\Gamma_0(\mathbb{R}^{p})$ is the indicator function of the $\ell_1$ norm ball,
$\bm{D}\in\mathbb{R}^{p \times m}$ is the first-order
discrete difference operator, i.e.,
\begin{align}
\label{eq:def:perspective_quadratic_func}
h(u,\sigma) &\coloneqq  \begin{dcases}
\frac{u^2}{2\sigma} + \frac{\sigma}{2}, &\text{\rm if }\sigma > 0;\\
0, &\mbox{{\rm if} } u=0 \text{ {\rm and} } \sigma = 0;\\
\infty, &\text{\rm otherwise},
\end{dcases}\\
\label{eq:def:indicator_func_l1_ball}
\iota_{B_1^{\alpha}}(\bm{\xi}) &\coloneqq 
\begin{cases}
0, &\text{\rm if }\|\bm{\xi}\|_1 \leq \alpha;\\
\infty, &\text{\rm otherwise},
\end{cases}\\
\nonumber
\bm{D}\bm{\sigma} &\coloneqq  (\sigma_{i_{1}}-\sigma_{i_{2}})_{(i_{1},i_{2})\in\mathcal{I}},
\end{align}
$\mathcal{I}$ is the set of indices of neighboring pairs, $p = |\mathcal{I}|$,
and $\alpha \in \mathbb{R}_{+}$ is a tuning parameter related to the number of blocks.
See \cite[Section II]{Kuroda:BlockSparse}
for how block partition is optimized with the latent vector $\bm{\sigma}$
using the minimization \eqref{eq:def:MI_penalty} for $\varphi = \varphi_{\alpha}^{\text{\rm LOP}}$.

\item[b)] (TGV) Consider the problem of estimating piecewise smooth $\bm{x}_{\text{\rm org}}$.
The TGV penalty \cite{Bredies:TGV} is introduced to
leverage piecewise smoothness by penalizing the magnitudes of discontinuous jumps of high-order derivatives.
Since second-order TGV is commonly used in many applications
\cite{Knoll:TGV,Valkonen:TGV,Ferstl:TGV,Ono:VTGV,Langkammer:TGV,Bredies:TGV_Manifold,Bredies:TGV_Review}, we focus on it to simplify the presentation.
The second-order TGV penalty is defined for $\bm{x} \in \mathbb{R}^n$ by
\begin{align*}
\mathrm{TGV}_{\alpha}(\bm{x})\coloneqq  
\min_{\bm{\sigma}\in\mathbb{R}^{m}}
\left[\alpha\|\bm{D}\bm{x}-\bm{\sigma}\|_{2,1}^{\mathcal{G}_{1}}
+(1-\alpha) \|\tilde{\bm{D}}\bm{\sigma}\|_{2,1}^{\mathcal{G}_{2}}\right],
\end{align*}
where $\bm{D}\in\mathbb{R}^{m \times n}$
is the first-order discrete difference operator,
$\tilde{\bm{D}}\in\mathbb{R}^{p \times m}$ is the \emph{symmetrized version} \cite{Bredies:TGV,Bredies:TGV_Manifold} of $\bm{D}$, $\mathcal{G}_{1}$ and $\mathcal{G}_{2}$ are partitions of $\{1,\ldots,m\}$ and $\{1,\ldots,p\}$, respectively, 
and $\alpha \in (0,1)$ is a tuning parameter.
Examples of $\mathcal{G}_{1}$ and $\mathcal{G}_{2}$ are provided below.
\begin{enumerate}
\item[i)] (Anisotropic TGV) We set $\mathcal{G}_{1} = (\{i\})_{i = 1}^{m}$ and $\mathcal{G}_{2} = (\{i\})_{i = 1}^{p}$,
i.e., $\|\cdot\|_{2,1}^{\mathcal{G}_{1}} = \|\cdot\|_{1}$ and $\|\cdot\|_{2,1}^{\mathcal{G}_{2}} = \|\cdot\|_{1}$.
\item[ii)] (Isotropic TGV)
For multi-dimensional signals such as images,
the isotropic version is often preferred \cite{Bredies:TGV_Review}, where the groups in $\mathcal{G}_{1}$ and  $\mathcal{G}_{2}$ are
designed by collecting the directional differences.
\end{enumerate}
By setting $\bm{L} = \bm{D}$,
the TGV penalty is reproduced by \eqref{eq:def:MI_penalty} for $\varphi_{\alpha}^{\text{\rm TGV}} \in \Gamma_0(\mathbb{R}^{m}\times\mathbb{R}^{m})$ defined by
\begin{align}
\label{eq:def:seed_function_TGV}
\varphi_{\alpha}^{\text{\rm TGV}}&(\bm{u},\bm{\sigma})
\coloneqq  \alpha\|\bm{u}-\bm{\sigma}\|_{2,1}^{\mathcal{G}_{1}}
+(1-\alpha) \|\tilde{\bm{D}}\bm{\sigma}\|_{2,1}^{\mathcal{G}_{2}}.
\end{align}
See \cite[Section 3.2]{Bredies:TGV} for how 
the jumps of the zeroth derivative (i.e.,~the signal itself)
and the first derivative of $\bm{x}$
are computed and penalized
using the minimization \eqref{eq:def:MI_penalty} for $\varphi = \varphi_{\alpha}^{\text{\rm TGV}}$,
where $\bm{u} = \bm{L}\bm{x}$ with $\bm{L} = \bm{D}$.
\end{enumerate}
\end{exmp}

As illustrated by Example \ref{exmp:MI_penalties},
the MI penalty is able to automatically incorporate a certain signal structure (e.g., blocks and derivatives)
by the minimization involved in its definition; however,
it often underestimates the magnitudes of significant components
because the value of the MI penalty usually increases with magnitude.
In particular, the LOP-$\ell_2/\ell_1$ penalty tends to underestimate blocks of large-magnitude components
because its value increases as the magnitude of components increases,
owing to its coercivity \cite[Theorem 1]{Kuroda:BlockSparse}.
Similarly, the TGV penalty tends to underestimate large jumps of derivatives
because it is a semi-norm \cite{Bredies:TGV}, i.e.,
its value increases as the magnitude of discontinuous jumps increases (see Fig.~\ref{fig:GME_TGV_penalty_value}(a) for an example).

\begin{figure}[t]
  \centering
    \includegraphics[width=0.9\columnwidth]{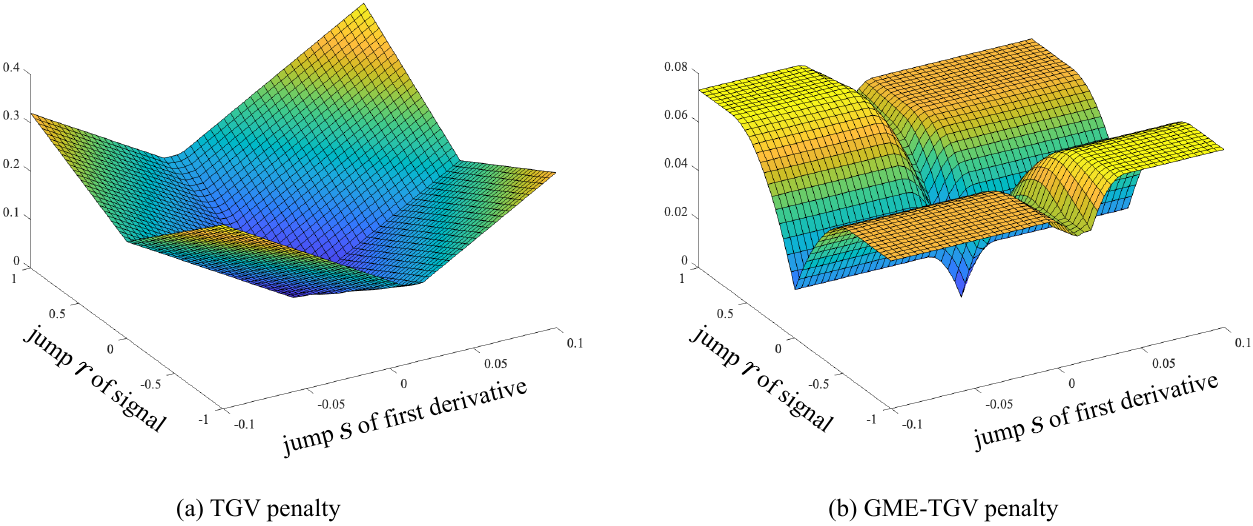}
  \caption{Values of TGV and GME-TGV penalties for $\bm{u}=\bm{D}\bm{x}$
   are plotted with respect to discontinuous jumps of a piecewise linear signal $\bm{x} \in \mathbb{R}^{50}$ defined by $x_i = 0$ $(i=1,2,\ldots,25)$ and $x_i = s(i-26)+r$ $(i=26,27,\ldots,50)$, where the parameters are set to be $\alpha = 0.2$ and $\bm{B} = \bm{I}$.}
  \label{fig:GME_TGV_penalty_value}
\end{figure}

\subsection{Design of GME-MI Penalty}
\label{sect:design_GMEMI_penalty}
To remedy the underestimation tendency of the MI penalty $\psi$,
we design the GME-MI penalty by subtracting from $\psi$ its generalized Moreau envelope.
Our design is inspired by
recent studies
\cite{Selesnick:GMC,Yin:CPNC_PCP,AlShabili:SRS,Lanza:GMC,Abe:LiGME,Yukawa:LiMES,Kitahara:LiGME_MRI,Yata:cLiGME}
on non-MI penalties (i.e., those defined without minimization) that have demonstrated that subtraction of the generalized Moreau envelopes from the respective penalties
mitigates the increase of the penalty values, and thus
remedies underestimation tendencies.
\begin{defn}
\label{defn:GMEMI_Penalty}
Based on the MI penalty $\psi$ in Definition \ref{defn:MI_Penalty},
we define the GME-MI penalty by
\begin{align}
\label{eq:def:GMEMI_penalty}
\Psi_{\bm{B}}\colon\mathbb{R}^{m}\rightarrow\mathbb{R}\cup\{\infty\}\colon
\bm{u} \mapsto  \psi(\bm{u}) - \inf_{\bm{v} \in \mathbb{R}^{m}}\left[\psi(\bm{v}) + \frac{1}{2}\|\bm{B}(\bm{u}-\bm{v})\|^2\right],
\end{align}
where $\bm{B}\in \mathbb{R}^{q \times m}$ can be tuned to adjust the shape of $\Psi_{\bm{B}}$.
\end{defn}
\begin{lemm}\label{lemma:domain_GME_MI_Penalty}
Under Assumption \ref{Assumption:phi_Coresive_Proper_BoundedBelow},
$\mathrm{dom}(\Psi_{\bm{B}}) = \mathbb{R}^{m}$.
\end{lemm}
\begin{proof}
The claim holds since $\psi(\bm{u}) \in \mathbb{R}$ and
$\psi + (1/2)\|\bm{B}(\bm{u}-\cdot)\|^2$
is bounded below for every $\bm{u} \in \mathbb{R}^{m}$
by Lemma \ref{lemm:psi_lsc_convex_fullDomain}.
\end{proof}
Applying the construction \eqref{eq:def:GMEMI_penalty}
to the MI penalties considered in Example \ref{exmp:MI_penalties}, we derive novel 
nonconvex enhancements of the LOP-$\ell_2/\ell_1$ and TGV penalties.
\begin{exmp}[Instances of GME-MI penalties]
\label{exmp:GME_MI_penalties}
\end{exmp}
\begin{enumerate}
\item[a)] (GME-LOP-$\ell_2/\ell_1$)
Applying \eqref{eq:def:GMEMI_penalty} to the LOP-$\ell_2/\ell_1$ penalty 
$\psi_{\alpha}^{\text{\rm LOP}}$ defined by \eqref{eq:def:MI_penalty}
for the seed function $\varphi_{\alpha}^{\text{\rm LOP}}$ in \eqref{eq:def:seed_function_LOPl2l1},
we derive a novel GME-LOP-$\ell_2/\ell_1$ penalty $\Psi_{\bm{B}, \alpha}^{\text{\rm LOP}}$.
While it is difficult to express $\Psi_{\bm{B}, \alpha}^{\text{\rm LOP}}$ in a closed form in general,
to illustrate how the underestimation is remedied, we provide an approximate relation with
$\bm{B} = (1/\sqrt{\gamma})\bm{I}$
and $\gamma \in \mathbb{R}_{++}$ as follows. Since $\psi_{\alpha}^{\text{\rm LOP}}$ is an approximation of the mixed $\ell_2/\ell_1$ norm 
with the optimized block partition \cite{Kuroda:BlockSparse}, we expect
$\psi_{\alpha}^{\text{\rm LOP}}(\bm{u}) \approx\sum_{k=1}^{K^{\star}_{\alpha}}\sqrt{|\mathcal{B}_k^{\star}|}\|\bm{u}_{\mathcal{B}_k^{\star}} \|_2$,
where $(\mathcal{B}_k^{\star})_{k=1}^{K^{\star}_{\alpha}}$ denotes the optimized block partition.
By further assuming a similar approximation $\psi_{\alpha}^{\text{\rm LOP}}(\bm{v}) \approx\sum_{k=1}^{K^{\star}_{\alpha}}\sqrt{|\mathcal{B}_k^{\star}|}\|\bm{v}_{\mathcal{B}_k^{\star}} \|_2$ in
\eqref{eq:def:GMEMI_penalty},
the \emph{block minimax concave penalty} \cite{Zhang:MCP,Huang:Review} using the optimized block partition is approximately reproduced as a special instance of the GME-LOP-$\ell_2/\ell_1$ penalty, i.e.,
\begin{align*}
\Psi_{\frac{1}{\sqrt{\gamma}}\bm{I}, \alpha}^{\text{\rm LOP}}(\bm{u}) &\approx \sum_{k=1}^{K^{\star}_{\alpha}}
\rho_{\gamma, |\mathcal{B}_k^{\star}|}\left(\|\bm{u}_{\mathcal{B}_k^{\star}}\|_2\right),\\
\rho_{\gamma, b}\colon \mathbb{R}_{+}\rightarrow\mathbb{R}_{+}&\colon t \mapsto
\begin{dcases}
\sqrt{b} t-\frac{t^2}{2\gamma}, &\text{\rm if }t \leq \gamma \sqrt{b};\\
\frac{\gamma b}{2}, &\text{\rm otherwise}.
\end{dcases}
\end{align*}
Based on this example, as the value of the GME-LOP-$\ell_2/\ell_1$ penalty
does not increase once the magnitudes of nonzero blocks exceed certain thresholds,
the underestimation of blocks of large-magnitude components is considered to have been resolved (see also numerical examples in Section \ref{sect:Experiment_blocksparse}).
Note that, since $\psi_{0}^{\text{\rm LOP}}(\bm{u}) = \sqrt{m}\|\bm{u}\|_2$
and $\psi_{\infty}^{\text{\rm LOP}}(\bm{u}) = \|\bm{u}\|_1$ by \cite[Theorem 2]{Kuroda:BlockSparse},
the equalities hold
for the coarsest block partition and the finest block partition,
which are optimal choices when $\alpha = 0$ and $\alpha = \infty$, respectively:
\begin{align*}
\Psi_{\frac{1}{\sqrt{\gamma}}\bm{I}, 0}^{\text{\rm LOP}}(\bm{u}) &= \rho_{\gamma, m}(\|\bm{u}\|_{2}),\\
\Psi_{\frac{1}{\sqrt{\gamma}}\bm{I}, \infty}^{\text{\rm LOP}}(\bm{u}) &= \sum_{i=1}^{m}
\rho_{\gamma, 1}(|u_i|).
\end{align*}

\item[b)] (GME-TGV) Applying \eqref{eq:def:GMEMI_penalty} to the TGV penalty 
$\psi_{\alpha}^{\text{\rm TGV}}$ defined by \eqref{eq:def:MI_penalty}
for the seed function $\varphi_{\alpha}^{\text{\rm TGV}}$ in \eqref{eq:def:seed_function_TGV},
we derive a novel GME-TGV penalty $\Psi_{\bm{B}, \alpha}^{\text{\rm TGV}}$.
To illustrate how the underestimation is remedied,
we present numerically computed values of $\Psi_{\bm{B}, \alpha}^{\text{\rm TGV}}$ in Fig.~\ref{fig:GME_TGV_penalty_value}(b).
It is clear from Fig.~\ref{fig:GME_TGV_penalty_value}(b) that the curve of the GME-TGV penalty becomes
flat when the magnitudes of jumps of derivatives exceed certain thresholds, and thus
the underestimation of large jumps is considered to have been resolved (see also numerical examples in Section \ref{sect:Experiment_piecewiselinear}).
\end{enumerate}

\subsection{GME-MI Model and Its Convexity Condition}
\label{sect:design_GMEMI_Regularization_Model}
We apply the GME-MI penalty to regularized least-squares estimation
of $\bm{x}_{\text{\rm org}} \in \mathcal{C}$ from $\bm{y} = \bm{A}\bm{x}_{\text{\rm org}} +  \bm{\varepsilon}$ as follows.
\begin{defn}
\label{defn:GMEMI_Regularization_Model}
Using the GME-MI penalty $\Psi_{\bm{B}}$ in Definition \ref{defn:GMEMI_Penalty}, we define the GME-MI model by
\begin{align}
\label{eq:def:OverallRegularizationModel}
\minimize_{\bm{x} \in \mathcal{C}}J(\bm{x}) \coloneqq  \frac{1}{2}\|\bm{y}-\bm{A}\bm{x}\|^2 + \lambda \Psi_{\bm{B}}(\bm{L}\bm{x}),
\end{align}
where $\lambda \in \mathbb{R}_{++}$ is the regularization parameter.
\end{defn}
\begin{lemm}\label{lemm:domain_GME_MI_cost_function}
Under Assumption \ref{Assumption:phi_Coresive_Proper_BoundedBelow}, $\mathrm{dom}(J) = \mathbb{R}^{n}$.
\end{lemm}
\begin{proof}
It follows from Lemma \ref{lemma:domain_GME_MI_Penalty}.
\end{proof}

While $\Psi_{\bm{B}}$ is nonconvex in general as it is the difference of convex functions,
we show that a suitable choice of $\bm{B}$ can ensure convexity
of the overall cost function $J$.

\begin{theo}
\label{theo:ExpressCostFunc_SumOfConvex}
Under Assumption \ref{Assumption:phi_Coresive_Proper_BoundedBelow}, we have
\begin{align}
\label{eq:ExpressCostFunc_SumOfConvex}
J(\bm{x}) = \frac{1}{2}\langle\bm{x},\bm{Q}\bm{x} \rangle
- \langle\bm{y},\bm{A}\bm{x} \rangle + \frac{1}{2}\|\bm{y}\|^2 
 + \lambda\psi(\bm{L}\bm{x})
+ \lambda\left(\psi + \frac{1}{2}\|\bm{B}\cdot \|^2\right)^{*}(\bm{B}^{\top}\bm{B}\bm{L}\bm{x})
\end{align}
for every $\bm{x} \in\mathbb{R}^{n}$,
and $J \in \Gamma_{0}(\mathbb{R}^{n})$ if
\begin{align}
\label{eq:OverallConvexCondition}
\bm{Q} \coloneqq  \bm{A}^{\top}\bm{A} - \lambda \bm{L}^{\top}\bm{B}^{\top}\bm{B}\bm{L} \succeq \bm{O}.
\end{align}
\end{theo}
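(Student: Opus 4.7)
My strategy is to recast the infimum in Definition \ref{defn:GMEMI_Penalty} as a Fenchel conjugate, which exposes the indefinite quadratic hidden inside $\Psi_{\bm{B}}$ and lets it be absorbed into the data-fidelity term. Once $J$ is in the form \eqref{eq:ExpressCostFunc_SumOfConvex}, verifying $J\in\Gamma_{0}(\mathbb{R}^{n})$ reduces to applying standard closure properties of $\Gamma_{0}$ to each summand.

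To establish \eqref{eq:ExpressCostFunc_SumOfConvex}, I would expand $\tfrac{1}{2}\|\bm{B}(\bm{u}-\bm{v})\|^{2}$, collect the $\bm{v}$-dependent terms into a supremum, and read off directly from the conjugate definition \eqref{eq:def:conjugate_convexFunc} that
\begin{align*}
\inf_{\bm{v}\in\mathbb{R}^{m}}\!\left[\psi(\bm{v}) + \tfrac{1}{2}\|\bm{B}(\bm{u}-\bm{v})\|^{2}\right] = \tfrac{1}{2}\|\bm{B}\bm{u}\|^{2} - \!\left(\psi + \tfrac{1}{2}\|\bm{B}\cdot\|^{2}\right)^{\!*}\!(\bm{B}^{\mathsf{T}}\bm{B}\bm{u}).
\end{align*}
Substituting $\bm{u}=\bm{L}\bm{x}$ into the resulting alternate expression for $\Psi_{\bm{B}}$ and expanding $\tfrac{1}{2}\|\bm{y}-\bm{A}\bm{x}\|^{2}$, the two quadratic-in-$\bm{x}$ contributions $\tfrac{1}{2}\langle\bm{x},\bm{A}^{\mathsf{T}}\bm{A}\bm{x}\rangle$ and $-\tfrac{\lambda}{2}\langle\bm{x},\bm{L}^{\mathsf{T}}\bm{B}^{\mathsf{T}}\bm{B}\bm{L}\bm{x}\rangle$ merge into $\tfrac{1}{2}\langle\bm{x},\bm{Q}\bm{x}\rangle$ with $\bm{Q}$ as in \eqref{eq:OverallConvexCondition}, yielding \eqref{eq:ExpressCostFunc_SumOfConvex}.

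For $J\in\Gamma_{0}(\mathbb{R}^{n})$ under $\bm{Q}\succeq\bm{O}$, I would check that each summand of \eqref{eq:ExpressCostFunc_SumOfConvex} belongs to $\Gamma_{0}(\mathbb{R}^{n})$: the quadratic-plus-affine block $\tfrac{1}{2}\langle\bm{x},\bm{Q}\bm{x}\rangle-\langle\bm{y},\bm{A}\bm{x}\rangle+\tfrac{1}{2}\|\bm{y}\|^{2}$ is continuous and convex; $\psi\circ\bm{L}$ lies in $\Gamma_{0}(\mathbb{R}^{n})$ since Lemma \ref{lemm:psi_lsc_convex_fullDomain} places $\psi$ in $\Gamma_{0}(\mathbb{R}^{m})$ with full domain; and the last summand is the precomposition of the conjugate of the proper member $\psi+\tfrac{1}{2}\|\bm{B}\cdot\|^{2}\in\Gamma_{0}(\mathbb{R}^{m})$ with a linear map, hence convex and lower semicontinuous. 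Summing, $J$ is convex and lsc, and is proper because the two representations of $J$ coincide and $\tfrac{1}{2}\|\bm{y}-\bm{A}\bm{x}\|^{2}+\lambda\Psi_{\bm{B}}(\bm{L}\bm{x})$ is finite whenever the envelope infimum is (and the envelope is bounded above by $\psi(\bm{L}\bm{x})<\infty$ via $\bm{v}=\bm{L}\bm{x}$). I do not anticipate a substantive obstacle: the proof is essentially a completion of squares dressed as a Fenchel transform, so the only delicate point is tracking signs so that the indefinite quadratic emerging from the envelope cancels against $-\tfrac{\lambda}{2}\|\bm{B}\bm{L}\bm{x}\|^{2}$ and combines with $\bm{A}^{\mathsf{T}}\bm{A}$ to form $\bm{Q}$.
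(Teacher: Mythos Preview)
Your proposal is correct and follows essentially the same approach as the paper's proof: expand the quadratic inside the envelope, recognize the resulting supremum as the Fenchel conjugate $(\psi+\tfrac{1}{2}\|\bm{B}\cdot\|^{2})^{*}$ evaluated at $\bm{B}^{\mathsf{T}}\bm{B}\bm{L}\bm{x}$, absorb the leftover $-\tfrac{\lambda}{2}\|\bm{B}\bm{L}\bm{x}\|^{2}$ into the data-fidelity quadratic to form $\bm{Q}$, and then verify $\Gamma_{0}$-membership term by term using Lemma~\ref{lemm:psi_lsc_convex_fullDomain}. The only cosmetic difference is that the paper substitutes $\bm{u}=\bm{L}\bm{x}$ before identifying the conjugate, whereas you do it after; and for properness the paper relies on the first four summands having full domain (so that adding a $\Gamma_{0}$ conjugate term preserves properness), while you appeal back to the original definition of $J$---both routes work.
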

\begin{proof}
By Definitions \ref{defn:GMEMI_Penalty} and \ref{defn:GMEMI_Regularization_Model},
since
\begin{align*}
\frac{1}{2}\|\bm{B}(\bm{L}\bm{x}-\bm{v})\|^2
= \frac{1}{2}\|\bm{B}\bm{L}\bm{x}\|^2 -\langle\bm{B}^{\top}\bm{B}\bm{L}\bm{x},\bm{v} \rangle + \frac{1}{2}\|\bm{B}\bm{v}\|^2,
\end{align*}
and $\|\bm{B}\bm{L}\bm{x}\|^2$ is independent of $\bm{v}$, we obtain
\begin{align*}
J(\bm{x}) &=\frac{1}{2}\langle\bm{x},\bm{Q}\bm{x} \rangle -\langle\bm{y},\bm{A}\bm{x} \rangle + \frac{1}{2}\|\bm{y}\|^2  +\lambda \psi(\bm{L}\bm{x}) - \lambda \inf_{\bm{v} \in \mathbb{R}^{m}}\left[\psi(\bm{v})
+\frac{1}{2}\|\bm{B}\bm{v}\|^2
-\langle\bm{B}^{\top}\bm{B}\bm{L}\bm{x},\bm{v} \rangle
\right],
\end{align*}
where we use the equality $\langle\bm{x},\bm{Q}\bm{x} \rangle = \|\bm{A}\bm{x}\|^2 - \lambda \|\bm{B}\bm{L}\bm{x}\|^2$ by the definition of $\bm{Q}$ in \eqref{eq:OverallConvexCondition}.
Moreover, since
\begin{align*}
&- \lambda \inf_{\bm{v} \in \mathbb{R}^{m}}\left[\psi(\bm{v})
+\frac{1}{2}\|\bm{B}\bm{v}\|^2
-\langle\bm{B}^{\top}\bm{B}\bm{L}\bm{x},\bm{v} \rangle
\right]\\
={}&\lambda\sup_{\bm{v} \in \mathbb{R}^{m}}\left[-\psi(\bm{v})
-\frac{1}{2}\|\bm{B}\bm{v}\|^2
+\langle\bm{B}^{\top}\bm{B}\bm{L}\bm{x},\bm{v} \rangle
\right]\\
={}&\lambda \left(\psi + \frac{1}{2}\|\bm{B}\cdot \|^2\right)^{*}(\bm{B}^{\top}\bm{B}\bm{L}\bm{x})
\end{align*}
due to the definition \eqref{eq:def:conjugate_convexFunc} of the conjugate,
we have the expression \eqref{eq:ExpressCostFunc_SumOfConvex}.
By Lemma \ref{lemm:domain_GME_MI_cost_function} and the expression \eqref{eq:ExpressCostFunc_SumOfConvex},
$\lambda (\psi + (1/2)\|\bm{B}\cdot \|^2)^{*} \circ \bm{B}^{\top}\bm{B}\bm{L}$
is proper. Thus,
from $\psi \in \Gamma_0(\mathbb{R}^{m})$ by Lemma \ref{lemm:psi_lsc_convex_fullDomain},
we have
$\lambda (\psi + (1/2)\|\bm{B}\cdot \|^2)^{*} \circ \bm{B}^{\top}\bm{B}\bm{L} \in \Gamma_0(\mathbb{R}^{n})$.
Meanwhile, $\psi \in \Gamma_0(\mathbb{R}^{m})$
and $\mathrm{dom}(\psi) = \mathbb{R}^{m}$ by Lemma \ref{lemm:psi_lsc_convex_fullDomain}
and the condition \eqref{eq:OverallConvexCondition} yield
that the function
\begin{align*}
\bm{x} \mapsto \frac{1}{2}\langle\bm{x},\bm{Q}\bm{x} \rangle -\langle\bm{y},\bm{A} \bm{x} \rangle + \frac{1}{2}\|\bm{y}\|^2  +\lambda \psi(\bm{L}\bm{x})
\end{align*}
belongs to $\Gamma_0(\mathbb{R}^{n})$.
Altogether, since $J$ is proper by Lemma \ref{lemm:domain_GME_MI_cost_function}, $J \in \Gamma_{0}(\mathbb{R}^{n})$ is guaranteed.
\end{proof}

\begin{rmrk}[Design of \mbox{$\bm{B}$} satisfying \eqref{eq:OverallConvexCondition}]
\label{rmrk:Design_GMEMat_OverallConvex}
While this paper constructs the novel GME-MI model from the MI penalty $\psi$ defined via minimization in \eqref{eq:def:MI_penalty}, the condition \eqref{eq:OverallConvexCondition} is common with existing studies \cite{Abe:LiGME,Kitahara:LiGME_MRI,Yata:cLiGME,Chen:GMEmat} on enhancement of linearly involved prox-friendly penalties defined without minimization.
This allows using the results of the existing studies to set $\bm{B}$ satisfying \eqref{eq:OverallConvexCondition}. In particular, by \cite[Theorem 1]{Chen:GMEmat},
we can set $\bm{B}$ for any pair of $\bm{A}$ and $\bm{L}$
in the closed form via LU decomposition (see also Section \ref{sect:Experiment} for the specific configurations in experiments).
\end{rmrk}

\section{Optimization Algorithm for GME-MI Model}
\label{sect:OptimizationAlgorithm}
We present the proposed proximal splitting algorithm with guaranteed convergence to
a globally optimal solution of the GME-MI model in Definition \ref{defn:GMEMI_Regularization_Model}
under the overall convexity condition \eqref{eq:OverallConvexCondition}.
Although the expression \eqref{eq:ExpressCostFunc_SumOfConvex}
represents the cost function $J$ in \eqref{eq:def:OverallRegularizationModel} as the sum of convex functions under the condition \eqref{eq:OverallConvexCondition}, its minimization is still challenging because
it involves non-prox-friendly functions that are difficult to handle:
$\psi$ defined by minimization in \eqref{eq:def:MI_penalty}
and the conjugate of the sum of $\psi$ and $(1/2)\|\bm{B}\cdot \|^2$.
As a result,
the standard proximal splitting algorithms (see, e.g., \cite{Combettes:ProxSplit,Condat:ProxSplit})
as well as the special algorithms
\cite{Lanza:GMC,Abe:LiGME,Kitahara:LiGME_MRI,AlShabili:SRS,Yata:cLiGME,Yukawa:LiMES}
for GME models of (linearly involved) prox-friendly penalties are not applicable to the GME-MI model.
We resolve this challenge by carefully designing
a computable averaged nonexpansive operator $T_{\text{\rm GME-MI}}$
whose fixed point set characterizes the solution set 
\begin{align}
\label{eq:def:SolutionSetGMEMI}
\mathcal{S} \coloneqq  \argmin_{\bm{x} \in \mathcal{C}} J(\bm{x})
= \argmin_{\bm{x} \in \mathbb{R}^{n}} \left[J(\bm{x})+\iota_{\mathcal{C}}(\bm{x})\right].
\end{align}
After introducing technical assumptions and lemmas in Section \ref{sect:assumption_lemma_for_proposed_optim_alg},
in Section \ref{sect:convergence_optim_alg}, we derive $T_{\text{\rm GME-MI}}$
via careful reformulation of
$\bm{x}^{\star} \in \mathcal{S}\Leftrightarrow \bm{0} \in \partial(J+\iota_{\mathcal{C}})(\bm{x}^{\star})$
and establish its averaged nonexpansiveness,
which yields the convergence of the proposed algorithm based on
the Krasnosel'ski\u{\i}-Mann iteration (Fact \ref{fact:KM_Iteration}) of $T_{\text{\rm GME-MI}}$
to an optimal solution in $\mathcal{S}$.

\subsection{Technical Assumptions and Lemmas}
\label{sect:assumption_lemma_for_proposed_optim_alg}
We adopt rather simple assumptions to reduce technical complexity, although these assumptions could be relaxed.
We first impose the following condition on the seed function $\varphi$
and the constraint set $\mathcal{C}$.
\begin{asmp}
\label{asmp:form_seed_func_for_optimization}
Assume that the seed function $\varphi$ can be represented as
\begin{align}
\label{eq:represent_seed_function}
\varphi(\bm{u},\bm{\sigma}) = f(\bm{u},\bm{\sigma}) + g(\bm{M}\bm{\sigma}),
\end{align}
where $f\in\Gamma_0(\mathbb{R}^{m}\times\mathbb{R}^{l})$ and $g\in\Gamma_0(\mathbb{R}^{p})$
are prox-friendly, and
$\bm{M}\in \mathbb{R}^{p \times l}$.
We also assume that $\iota_{\mathcal{C}}$ is prox-friendly, i.e., the projection $P_{\mathcal{C}}$ onto $\mathcal{C}$ is computable.
\end{asmp}
While the form \eqref{eq:represent_seed_function} may seem rather specific,
it covers
the seed functions used for the important instances of the GME-MI penalty presented in Example \ref{exmp:GME_MI_penalties}.
\begin{exmp}\hfill
\label{exmp:representation_MIpenalty_forOptimization}
\begin{enumerate}
\item[a)] The seed function in \eqref{eq:def:seed_function_LOPl2l1} for the GME-LOP-$\ell_2/\ell_1$ penalty can be represented in the form of \eqref{eq:represent_seed_function} by setting
$f(\bm{u},\bm{\sigma}) = \sum_{i=1}^{m}h(u_i,\sigma_i)$,
$g = \iota_{B_1^{\alpha}}$, and $\bm{M} = \bm{D}$.
\item[b)] The seed function in \eqref{eq:def:seed_function_TGV} for the GME-TGV penalty can be represented
in the form of \eqref{eq:represent_seed_function}
by setting $f(\bm{u},\bm{\sigma}) = \alpha\|\bm{u}-\bm{\sigma}\|_{2,1}^{\mathcal{G}_{1}}$,
$g = (1-\alpha) \|\cdot\|_{2,1}^{\mathcal{G}_{2}}$, and
$\bm{M} = \tilde{\bm{D}}$.
\end{enumerate}
\ref{appendix:ComputeProxOp} shows that these $f$ and $g$ are prox-friendly. Note that the constraint set $\mathcal{C}$ is independent of the seed function,
and can be chosen based on the application.
\end{exmp}\noindent

To use the formulas of subdifferential calculus (Fact \ref{fact:SubdifferentialCalculus}) in the derivation of the proposed algorithm,
we further adopt the following assumption, which automatically holds
for the seed functions used for Example \ref{exmp:GME_MI_penalties}
with their representations in Example \ref{exmp:representation_MIpenalty_forOptimization}
(see \ref{proof:QualificationConditions}).
\begin{asmp}
\label{Assumption:QualificationConditions}
Define
\begin{align}
\label{eq:def:L_bar}
\bar{\bm{L}}&\coloneqq 
\begin{bmatrix}
\bm{L} & \bm{O}\\
\bm{O}& \bm{I}
\end{bmatrix} \in \mathbb{R}^{(m+l)\times(n+l)},\\
\label{eq:def:M_bar}
\bar{\bm{M}}&\coloneqq 
\begin{bmatrix}
\bm{O} & \bm{M}
\end{bmatrix}\in \mathbb{R}^{p \times(m+l)},
\end{align}
and assume that the following conditions hold.
\begin{enumerate}
\item[i)] One of the following conditions holds:\vspace{2pt}
\begin{enumerate}
\item[a)] $(\forall (\bm{u},\bm{\sigma})\in\mathbb{R}^{m}\times\mathbb{R}^{l}) \quad \varphi(-\bm{u},\bm{\sigma}) = \varphi(\bm{u},\bm{\sigma})$.\vspace{2pt}
\item[b)] $(\forall (\bm{u},\bm{\sigma})\in\mathbb{R}^{m}\times\mathbb{R}^{l}) \quad \varphi(-\bm{u},\bm{\sigma}) = \varphi(\bm{u},-\bm{\sigma})$.\vspace{2pt}
\end{enumerate}
\item[ii)] $\bm{0} \in \mathrm{ri}\left[\mathrm{dom}(\varphi) - \mathrm{ran}(\bar{\bm{L}})\right]$.\vspace{2pt}
\item[iii)] $\bm{0} \in \mathrm{ri}\left[\mathrm{dom}(g) -\bar{\bm{M}}(\mathrm{dom}(f))\right]$.\vspace{2pt}
\end{enumerate}
\end{asmp}
\begin{lemm}
\label{lemm:Qualification_conj_psi_squaredBNorm}
Assumptions \ref{Assumption:phi_Coresive_Proper_BoundedBelow} and \ref{Assumption:QualificationConditions}(i) imply
\begin{align*}
\mbox{i)}&\quad \mathrm{dom}\left[\left(\psi + \frac{1}{2}\|\bm{B}\cdot \|^2\right)^{*} \circ \bm{B}^{\top}\bm{B}\bm{L}\right] = \mathbb{R}^{n},\\
\mbox{ii)}&\quad \bm{0} \in \mathrm{ri}\left[\mathrm{dom}\left(\left(\psi + \frac{1}{2}\|\bm{B}\cdot \|^2\right)^{*}\right) - \mathrm{ran}(\bm{B}^{\top}\bm{B}\bm{L})\right].
\end{align*}
\end{lemm}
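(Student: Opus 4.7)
The plan is to abbreviate $\eta \coloneqq \psi + \tfrac{1}{2}\|\bm{B}\cdot\|^2$, which lies in $\Gamma_0(\mathbb{R}^m)$ by Lemma \ref{lemm:psi_lsc_convex_fullDomain}, and to extract two immediate consequences of the hypothesized assumptions before addressing (i) and (ii). Assumption \ref{Assumption:QualificationConditions}(i) yields $\psi(\bm{u}) = \min_{\bm{\sigma}}\varphi(\bm{u},\bm{\sigma}) \geq \inf \varphi > -\infty$, so $\psi$ (and hence $\eta$) is bounded below. Assumption \ref{Assumption:QualificationConditions}(ii) makes $\psi$ even: directly in case (a), and in case (b) via the substitution $\bm{\sigma}\mapsto -\bm{\sigma}$ inside the minimization defining $\psi$. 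Since $\tfrac{1}{2}\|\bm{B}\cdot\|^2$ is even as well, $\eta$ is even, and then $\eta^*$ is even by a change of variables $\bm{v}\mapsto -\bm{v}$ in the supremum defining it.

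For (i), I would verify $\mathrm{ran}(\bm{B}^{\mathsf{T}}) \subset \mathrm{dom}(\eta^*)$ by a completion of squares. Writing $\bm{w} = \bm{B}^{\mathsf{T}}\bm{z}$ and expanding inside the supremum defining $\eta^*$, one obtains
\begin{align*}
\eta^*(\bm{B}^{\mathsf{T}}\bm{z}) = \tfrac{1}{2}\|\bm{z}\|^2 - \inf_{\bm{v}\in\mathbb{R}^m}\bigl[\psi(\bm{v}) + \tfrac{1}{2}\|\bm{B}\bm{v} - \bm{z}\|^2\bigr],
\end{align*}
which is bounded above by $\tfrac{1}{2}\|\bm{z}\|^2 - \inf\psi < \infty$ by the first observation above. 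Since $\mathrm{ran}(\bm{B}^{\mathsf{T}}\bm{B}\bm{L}) \subset \mathrm{ran}(\bm{B}^{\mathsf{T}})$, this gives $(\eta^*\circ\bm{B}^{\mathsf{T}}\bm{B}\bm{L})(\bm{x})<\infty$ for every $\bm{x}\in\mathbb{R}^n$, establishing (i).

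For (ii), I would denote $S \coloneqq \mathrm{dom}(\eta^*) - \mathrm{ran}(\bm{B}^{\mathsf{T}}\bm{B}\bm{L})$ and combine (i) with the evenness of $\eta^*$ as follows. Part (i) already yields $\bm{0}\in S$ (taking $\bm{0} = \bm{B}^{\mathsf{T}}\bm{B}\bm{L}\bm{0} - \bm{B}^{\mathsf{T}}\bm{B}\bm{L}\bm{0}$). Evenness gives $\mathrm{dom}(\eta^*) = -\mathrm{dom}(\eta^*)$, while $\mathrm{ran}(\bm{B}^{\mathsf{T}}\bm{B}\bm{L})$ is a linear subspace; thus $S$ is a convex set satisfying $S = -S$. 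The claim $\bm{0}\in\mathrm{ri}(S)$ then reduces to the elementary fact that any convex set symmetric about and containing the origin has $\bm{0}$ in its relative interior. To justify this, I would choose a basis $\{\bm{c}_1,\ldots,\bm{c}_k\}$ of $\mathrm{span}(S)$ inside $S$ (noting $\bm{0}\in S$ makes $\mathrm{aff}(S) = \mathrm{span}(S)$), use $-\bm{c}_i\in S$ by symmetry, and observe that every $\sum_i \mu_i \bm{c}_i$ with $\sum_i|\mu_i|\leq 1$ is a convex combination of $\bm{0}$ and $\pm\bm{c}_i$ and therefore lies in $S$; this exhibits a relative neighborhood of $\bm{0}$ inside $S$.

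No step presents a real obstacle: the completion-of-squares identity underlying (i) is standard, and the symmetric-set observation behind (ii) is elementary. The only points that require care are (a) tracing lower-boundedness and evenness of $\psi$ back through the minimization to the corresponding hypotheses on $\varphi$, rather than assuming them for $\psi$ directly, and (b) recognizing that the linearity of $\mathrm{ran}(\bm{B}^{\mathsf{T}}\bm{B}\bm{L})$ (hence its symmetry about $\bm{0}$) is what couples with the evenness of $\eta^*$ to produce a symmetric difference set.
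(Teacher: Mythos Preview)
Your argument is correct and matches the paper's approach: completion of squares combined with the lower bound on $\psi$ for (i), and evenness of $\eta^*$ yielding a nonempty symmetric convex difference set for (ii), where the paper cites \cite[Example~6.10]{BC:ConvexAnalysis} for the fact you prove by hand. One minor point: since the lemma's ``respectively'' asks that claim (ii) follow from Assumption~\ref{Assumption:QualificationConditions}(ii) alone, it is cleaner to obtain $S\neq\varnothing$ from $\eta^*\in\Gamma_0(\mathbb{R}^m)$ (hence $\mathrm{dom}(\eta^*)\neq\varnothing$) rather than by invoking part (i); symmetry and convexity then already force $\bm{0}\in S$.
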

\begin{proof}
Claim (i) follows from Lemma \ref{lemm:domain_GME_MI_cost_function} and the expression \eqref{eq:ExpressCostFunc_SumOfConvex}.
We prove claim (ii) as follows.
Assumption \ref{Assumption:QualificationConditions}(i)
and  Definition \ref{defn:MI_Penalty} imply
$\psi(-\bm{u}) = \psi(\bm{u})$ for every $\bm{u} \in \mathbb{R}^{m}$.
Thus, for any $\bm{w} \in \mathbb{R}^{m}$, we have
\begin{align*}
\left(\psi + \frac{1}{2}\|\bm{B}\cdot \|^2\right)^{*}(-\bm{w})
= \left(\psi + \frac{1}{2}\|\bm{B}\cdot \|^2\right)^{*}(\bm{w}),
\end{align*}
which implies that $\mathcal{A} \coloneqq  \mathrm{dom}\left[\left(\psi + \frac{1}{2}\|\bm{B}\cdot \|^2\right)^{*}\right]$ is
symmetric, i.e., $\mathcal{A} = -\mathcal{A}$.
Since $\left(\psi + \frac{1}{2}\|\bm{B}\cdot \|^2\right)^{*} \in \Gamma_0(\mathbb{R}^{m})$
is proved by Theorem \ref{theo:ExpressCostFunc_SumOfConvex},
$\mathcal{A}$ is nonempty and convex by \cite[Proposition 8.2]{BC:ConvexAnalysis}.
Altogether, $\mathcal{A} - \mathrm{ran}(\bm{B}^{\top}\bm{B}\bm{L})$
is a nonempty symmetric convex set, and thus claim (ii) holds by \cite[Example 6.10]{BC:ConvexAnalysis}.
\end{proof}

We also use the following lemma, which is a refinement of \cite[Proposition 16.59]{BC:ConvexAnalysis} for our scenario.
\begin{lemm}
\label{lemma:subdifferential_MarginalFunc}
For any $(\bm{u},\bm{\varsigma})\in\mathbb{R}^{m}\times\mathbb{R}^{l}$ and $\bm{r} \in \mathbb{R}^{m}$, we have
\begin{equation}
\label{eq:subdifferential_MarginalFunc_psi}
\left\{\begin{aligned}
&\bm{r} \in  \partial \psi(\bm{u})\\
&\bm{\varsigma} \in \argmin_{\bm{\sigma}\in\mathbb{R}^{l}}\varphi(\bm{u},\bm{\sigma})
\end{aligned}\right.
\Leftrightarrow
(\bm{r},\bm{0}) \in \partial \varphi(\bm{u},\bm{\varsigma}).
\end{equation}
Similarly, for any $(\bm{x},\bm{\varsigma})\in\mathbb{R}^{n}\times\mathbb{R}^{l}$ and $\bm{s} \in \mathbb{R}^{n}$, we have
\begin{equation}
\label{eq:subdifferential_MarginalFunc_psi_L}
\left\{\begin{aligned}
&\bm{s} \in  \partial (\psi \circ\bm{L})(\bm{x})\\
&\bm{\varsigma} \in \argmin_{\bm{\sigma}\in\mathbb{R}^{l}}\varphi(\bm{L}\bm{x},\bm{\sigma})
\end{aligned}\right.
\Leftrightarrow
(\bm{s},\bm{0}) \in \partial (\varphi\circ\bar{\bm{L}})(\bm{x},\bm{\varsigma}).
\end{equation}
\end{lemm}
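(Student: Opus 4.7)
The plan is to first prove \eqref{eq:subdifferential_MarginalFunc_psi} directly from the definition of the subdifferential, and then to deduce \eqref{eq:subdifferential_MarginalFunc_psi_L} by applying Fact \ref{fact:SubdifferentialCalculus}(c) (chain rule) on both sides of that equivalence at $\bm{u}=\bm{L}\bm{x}$, exploiting the block structure of $\bar{\bm{L}}$ in \eqref{eq:def:L_bar}.

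For \eqref{eq:subdifferential_MarginalFunc_psi}, the ``$\Leftarrow$'' implication will be read off the subgradient inequality $\varphi(\bm{u}',\bm{\sigma}')\geq\varphi(\bm{u},\bm{\varsigma})+\langle\bm{r},\bm{u}'-\bm{u}\rangle$ in two steps: fixing $\bm{u}'=\bm{u}$ and varying $\bm{\sigma}'$ forces $\bm{\varsigma}\in\argmin_{\bm{\sigma}}\varphi(\bm{u},\bm{\sigma})$, which together with Definition \ref{defn:MI_Penalty} gives $\varphi(\bm{u},\bm{\varsigma})=\psi(\bm{u})$; then minimizing the inequality over $\bm{\sigma}'$ for each $\bm{u}'$ yields $\psi(\bm{u}')\geq\psi(\bm{u})+\langle\bm{r},\bm{u}'-\bm{u}\rangle$, i.e., $\bm{r}\in\partial\psi(\bm{u})$. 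For the converse I would run the chain backwards: from $\bm{\varsigma}\in\argmin_{\bm{\sigma}}\varphi(\bm{u},\bm{\sigma})$ we obtain $\psi(\bm{u})=\varphi(\bm{u},\bm{\varsigma})$, and combining this with $\psi(\bm{u}')\leq\varphi(\bm{u}',\bm{\sigma}')$ and the subgradient inequality for $\psi$ at $\bm{u}$ produces exactly the subgradient inequality for $\varphi$ at $(\bm{u},\bm{\varsigma})$ in direction $(\bm{r},\bm{0})$. Lemma \ref{lemm:nonempty_partialmin_phi} guarantees that the minimizer set is nonempty, so the statement is non-vacuous.

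For \eqref{eq:subdifferential_MarginalFunc_psi_L}, I would chain both sides of \eqref{eq:subdifferential_MarginalFunc_psi}, evaluated at $\bm{u}=\bm{L}\bm{x}$, through the chain rule. On the left, since $\mathrm{dom}(\psi)=\mathbb{R}^{m}$ by Lemma \ref{lemm:psi_lsc_convex_fullDomain}, the qualification $\bm{0}\in\mathrm{ri}[\mathrm{dom}(\psi)-\mathrm{ran}(\bm{L})]$ is automatic, so Fact \ref{fact:SubdifferentialCalculus}(c) yields $\partial(\psi\circ\bm{L})(\bm{x})=\bm{L}^{\mathsf{T}}(\partial\psi(\bm{L}\bm{x}))$. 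On the right, Assumption \ref{Assumption:QualificationConditions}(iii) is literally the qualification $\bm{0}\in\mathrm{ri}[\mathrm{dom}(\varphi)-\mathrm{ran}(\bar{\bm{L}})]$, so Fact \ref{fact:SubdifferentialCalculus}(c) again gives $\partial(\varphi\circ\bar{\bm{L}})(\bm{x},\bm{\varsigma})=\bar{\bm{L}}^{\mathsf{T}}(\partial\varphi(\bm{L}\bm{x},\bm{\varsigma}))$. The block form \eqref{eq:def:L_bar} implies $\bar{\bm{L}}^{\mathsf{T}}(\bm{r},\bm{t})=(\bm{L}^{\mathsf{T}}\bm{r},\bm{t})$, so $(\bm{s},\bm{0})\in\partial(\varphi\circ\bar{\bm{L}})(\bm{x},\bm{\varsigma})$ unpacks to the existence of some $\bm{r}$ with $\bm{s}=\bm{L}^{\mathsf{T}}\bm{r}$ and $(\bm{r},\bm{0})\in\partial\varphi(\bm{L}\bm{x},\bm{\varsigma})$; feeding this pair into \eqref{eq:subdifferential_MarginalFunc_psi} converts it into $\bm{r}\in\partial\psi(\bm{L}\bm{x})$ together with $\bm{\varsigma}\in\argmin_{\bm{\sigma}}\varphi(\bm{L}\bm{x},\bm{\sigma})$, and $\bm{s}\in\bm{L}^{\mathsf{T}}(\partial\psi(\bm{L}\bm{x}))=\partial(\psi\circ\bm{L})(\bm{x})$ then closes the equivalence.

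The only point that demands care is the verification of the two chain-rule qualification conditions: the first follows for free from Lemma \ref{lemm:psi_lsc_convex_fullDomain}, while the second is precisely the reason Assumption \ref{Assumption:QualificationConditions}(iii) was imposed. Beyond that, the argument is essentially bookkeeping on the block decomposition of $\bar{\bm{L}}$ and careful tracking of the zero component that encodes the minimizer condition, so I do not anticipate further technical difficulty.
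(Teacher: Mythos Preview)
Your argument for \eqref{eq:subdifferential_MarginalFunc_psi} is correct and matches the paper's proof; the only cosmetic difference is that the paper packages both directions by citing \cite[Proposition 16.59]{BC:ConvexAnalysis} once the equality $\psi(\bm{u})=\varphi(\bm{u},\bm{\varsigma})$ has been secured, whereas you write out the subgradient inequalities by hand.

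For \eqref{eq:subdifferential_MarginalFunc_psi_L} you take a genuinely different route. The paper does \emph{not} invoke the chain rule or Assumption \ref{Assumption:QualificationConditions}(iii) here: it simply observes that, by the block form of $\bar{\bm{L}}$, one has $(\psi\circ\bm{L})(\bm{x})=\min_{\bm{\sigma}}(\varphi\circ\bar{\bm{L}})(\bm{x},\bm{\sigma})$, so $\psi\circ\bm{L}$ is itself the marginal of $\varphi\circ\bar{\bm{L}}\in\Gamma_0(\mathbb{R}^{n}\times\mathbb{R}^{l})$, and the very argument used for \eqref{eq:subdifferential_MarginalFunc_psi} applies verbatim with $(\psi,\varphi)$ replaced by $(\psi\circ\bm{L},\varphi\circ\bar{\bm{L}})$. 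Your approach instead pushes both sides of \eqref{eq:subdifferential_MarginalFunc_psi} through Fact \ref{fact:SubdifferentialCalculus}(c), which is also correct but costs you the extra qualification hypothesis Assumption \ref{Assumption:QualificationConditions}(iii). The paper's route therefore makes Lemma \ref{lemma:subdifferential_MarginalFunc} hold under Assumption \ref{Assumption:phi_CoresiveAndProper} alone; your route ties it to Assumption \ref{Assumption:QualificationConditions}(iii), which is harmless in context (Theorem \ref{theo:Convergence_OptimizationAlg} assumes it anyway) but slightly less sharp as a standalone lemma.
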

\begin{proof}
First, we prove the relation \eqref{eq:subdifferential_MarginalFunc_psi}.
\begin{enumerate}
\item[$(\Rightarrow)$]
The condition $\bm{\varsigma} \in \argmin_{\bm{\sigma}\in\mathbb{R}^{l}}\varphi(\bm{u},\bm{\sigma})$
and Definition \ref{defn:MI_Penalty} imply $\psi(\bm{u}) = \varphi(\bm{u},\bm{\varsigma})$.
Thus, since $\psi$ is proper by Lemma \ref{lemm:psi_lsc_convex_fullDomain}
and $\varphi \in\Gamma_0(\mathbb{R}^{m}\times\mathbb{R}^{l})$,
the claim follows from \cite[Proposition 16.59]{BC:ConvexAnalysis}.
\item[$(\Leftarrow)$] Suppose $(\bm{r},\bm{0}) \in \partial \varphi(\bm{u},\bm{\varsigma})$.
Then, for any $(\bm{v},\bm{\sigma}) \in \mathbb{R}^{m}\times\mathbb{R}^{l}$, we have
\begin{align*}
&\langle(\bm{v},\bm{\sigma})-(\bm{u},\bm{\varsigma}),(\bm{r},\bm{0}) \rangle + \varphi(\bm{u},\bm{\varsigma}) \leq \varphi(\bm{v},\bm{\sigma})\\
\Leftrightarrow {}&  \langle\bm{v}-\bm{u},\bm{r} \rangle + \langle\bm{\sigma}-\bm{\varsigma},\bm{0} \rangle +  \varphi(\bm{u},\bm{\varsigma}) \leq \varphi(\bm{v},\bm{\sigma})\\
\Leftrightarrow {}& \langle\bm{v}-\bm{u},\bm{r} \rangle +  \varphi(\bm{u},\bm{\varsigma}) \leq \varphi(\bm{v},\bm{\sigma}).
\end{align*}
In particular, by setting $\bm{v} = \bm{u}$, we obtain
$(\forall \bm{\sigma}\in\mathbb{R}^{l})\quad \varphi(\bm{u},\bm{\varsigma}) \leq \varphi(\bm{u},\bm{\sigma})$,
i.e., $\bm{\varsigma} \in \argmin_{\bm{\sigma}\in\mathbb{R}^{l}}\varphi(\bm{u},\bm{\sigma})$,
and thus $\varphi(\bm{u},\bm{\varsigma}) = \psi(\bm{u})$.
This equality and
$(\bm{r},\bm{0}) \in \partial \varphi(\bm{u},\bm{\varsigma})$
imply $\bm{r} \in  \partial \psi(\bm{u})$ by \cite[Proposition 16.59]{BC:ConvexAnalysis}.
\end{enumerate}
The relation \eqref{eq:subdifferential_MarginalFunc_psi_L} can be derived similarly, since
the definition of $\bar{\bm{L}}$ in \eqref{eq:def:L_bar} yields
$(\psi\circ \bm{L})(\bm{x}) = \min_{\bm{\sigma}\in\mathbb{R}^{l}}\varphi(\bm{L}\bm{x},\bm{\sigma})
=\min_{\bm{\sigma}\in\mathbb{R}^{l}}\left(\varphi\circ\bar{\bm{L}}\right)(\bm{x},\bm{\sigma})$.
\end{proof}

\subsection{Design of Proposed Algorithm and Its Convergence}
\label{sect:convergence_optim_alg}
Now we derive the averaged nonexpansive operator $T_{\text{\rm GME-MI}}$
that characterizes the solution set $\mathcal{S}$ in \eqref{eq:def:SolutionSetGMEMI}
and establish the convergence of Algorithm \ref{alg:Optimization_GME_MI},
designed based on
the Krasnosel'ski\u{\i}-Mann iteration of $T_{\text{\rm GME-MI}}$,
to a globally optimal solution of the GME-MI model in Definition \ref{defn:GMEMI_Regularization_Model}.

\begin{theo}
\label{theo:Convergence_OptimizationAlg}
Set $\kappa > 1$ and $\gamma_1,\gamma_2,\gamma_3, \gamma_4 \in \mathbb{R}_{++}$
satisfying\footnote{%
For any $\kappa > 1$, the condition \eqref{eq:cond_stepSizeLikeParam} is satisfied by, e.g., using $\delta \in \mathbb{R}_{++}$,
$\gamma_1 = 1/(\|(\kappa/2)\bm{A}^{\top}\bm{A}+\lambda\bm{L}^{\top}\bm{L}\|_{\mathrm{op}} + \delta)$,
$\gamma_2 = 1/(\|\bm{M}\|_{\mathrm{op}}^2 + 1 + \delta)$,
$\gamma_3 =  1/((\kappa/2 + 2/\kappa)\|\bm{B}\|_{\mathrm{op}}^2+\delta)$, and
$\gamma_4 = 1/(\gamma_3\|\bm{M}\|_{\mathrm{op}}^2 + \delta)$.}
\begin{equation}
\label{eq:cond_stepSizeLikeParam}
\left\{\begin{aligned}
&\frac{1}{\gamma_1}\bm{I} - \frac{\kappa}{2}\bm{A}^{\top}\bm{A}-\lambda\bm{L}^{\top}\bm{L} \succ \bm{O},\\
&\frac{1}{\gamma_2}\bm{I} - \bm{I} - \bm{M}^{\top}\bm{M} \succ \bm{O},\\
&\frac{1}{\gamma_3} \geq  \left(\frac{\kappa}{2} + \frac{2}{\kappa}\right)\|\bm{B}\|_{\mathrm{op}}^2,\\
&\frac{1}{\gamma_4}\bm{I} - \gamma_3\bm{M}\bm{M}^{\top} \succ \bm{O}.\\
\end{aligned}\right.
\end{equation}
Let $\mathcal{H} \coloneqq  \mathbb{R}^{n}\times\mathbb{R}^{l}\times\mathbb{R}^{m}\times\mathbb{R}^{l}\times\mathbb{R}^{m}\times\mathbb{R}^{l}\times\mathbb{R}^{p}\times\mathbb{R}^{p}$.
Define $T_{\text{\rm GME-MI}}\colon\mathcal{H}\rightarrow\mathcal{H}\colon
(\bm{x},
\bm{\sigma},
\bm{v},
\bm{\tau},
\bm{r},
\bm{\eta},
\bm{\xi},
\bm{\zeta})\mapsto (\hat{\bm{x}},
\hat{\bm{\sigma}},
\hat{\bm{v}},
\hat{\bm{\tau}},
\hat{\bm{r}},
\hat{\bm{\eta}},
\hat{\bm{\xi}},
\hat{\bm{\zeta}})$
by
\begin{align}
\label{eq:def:AveragedOperatorForGME-MI}
\begin{aligned}
\hat{\bm{x}} &\coloneqq  P_{\mathcal{C}}[\bm{x}-\gamma_1(\bm{Q}\bm{x} - \bm{A}^{\top}\bm{y}+\lambda\bm{L}^{\top}\bm{B}^{\top}\bm{B}\bm{v}+\lambda\bm{L}^{\top}\bm{r})],\\
\hat{\bm{\sigma}} &\coloneqq  \bm{\sigma}-\gamma_2(\bm{\eta}+\bm{M}^{\top}\bm{\xi}),\\
(\hat{\bm{v}},\hat{\bm{\tau}})
&\coloneqq \mathrm{prox}_{\gamma_3 f}(
\bm{v}+\gamma_3\bm{B}^{\top}\bm{B}(\bm{L}(2\hat{\bm{x}}-\bm{x})-\bm{v}),
\bm{\tau}-\gamma_3\bm{M}^{\top}\bm{\zeta}),\\
(\hat{\bm{r}},\hat{\bm{\eta}}) &\coloneqq 
\mathrm{prox}_{f^{*}}(\bm{r} + \bm{L}(2\hat{\bm{x}}-\bm{x}),\bm{\eta}+2\hat{\bm{\sigma}}-\bm{\sigma}),\\
\hat{\bm{\xi}}&\coloneqq \mathrm{prox}_{g^{*}}(\bm{\xi}+\bm{M}(2\hat{\bm{\sigma}}-\bm{\sigma})),\\
\hat{\bm{\zeta}}&\coloneqq \mathrm{prox}_{\gamma_4 g^{*}}(\bm{\zeta} + \gamma_4\bm{M}(2\hat{\bm{\tau}}-\bm{\tau})).
\end{aligned}
\end{align}
Then, under Assumptions \ref{Assumption:phi_Coresive_Proper_BoundedBelow}, \ref{asmp:form_seed_func_for_optimization}, and \ref{Assumption:QualificationConditions} and the condition \eqref{eq:OverallConvexCondition},
\begin{align}
\label{eq:Characterization_SolutionSet_AveragedNonexpansiveOp}
\bm{x}^{\star} \in \mathcal{S}
\Leftrightarrow
\exists (\bm{\sigma}^{\star},
\bm{v}^{\star},
\bm{\tau}^{\star},
\bm{r}^{\star},
\bm{\eta}^{\star},
\bm{\xi}^{\star},
\bm{\zeta}^{\star})\text{ {\rm such that} } 
(\bm{x}^{\star},\bm{\sigma}^{\star},
\bm{v}^{\star},
\bm{\tau}^{\star},
\bm{r}^{\star},
\bm{\eta}^{\star},
\bm{\xi}^{\star},
\bm{\zeta}^{\star}) \in \mathrm{Fix}(T_{\text{\rm GME-MI}}),
\end{align}
and $T_{\text{\rm GME-MI}}$ is $\kappa/(2\kappa-1)$-averaged nonexpansive in
$\mathcal{H}$
with $\langle\cdot,\cdot\rangle_{\bm{P}}$ and $\|\cdot\|_{\bm{P}}$, where
$\bm{P} \in\mathbb{R}^{\mathrm{dim}\mathcal{H}\times\mathrm{dim}\mathcal{H}}$ is the positive definite matrix
defined by
\begin{equation}
\label{eq:def:positiveSelfAdjOp}
\begin{aligned}
\bm{P}
\begin{bmatrix}
\bm{x}\\
\bm{\sigma}\\
\bm{v}\\
\bm{\tau}\\
\bm{r}\\
\bm{\eta}\\
\bm{\xi}\\
\bm{\zeta}
\end{bmatrix}
\coloneqq 
\begin{bmatrix*}[l]
(1/\gamma_1)\bm{x}-\lambda\bm{L}^{\top}\bm{B}^{\top}\bm{B}\bm{v} - \lambda\bm{L}^{\top}\bm{r}\\
(\lambda/\gamma_2)\bm{\sigma}- \lambda\bm{\eta}- \lambda\bm{M}^{\top}\bm{\xi}\\
(\lambda/\gamma_3)\bm{v}-\lambda\bm{B}^{\top}\bm{B}\bm{L}\bm{x}\\
(\lambda/\gamma_3)\bm{\tau}-\lambda\bm{M}^{\top}\bm{\zeta}\\
\lambda\bm{r}-\lambda\bm{L}\bm{x}\\
\lambda\bm{\eta}-\lambda\bm{\sigma}\\
\lambda\bm{\xi}-\lambda\bm{M}\bm{\sigma}\\
(\lambda/\gamma_4)\bm{\zeta}-\lambda\bm{M}\bm{\tau}
\end{bmatrix*}.
\end{aligned}
\end{equation}
In conjunction with Fact \ref{fact:KM_Iteration}, these properties yield that $(\bm{x}_k)_{k\in\mathbb{N}}$ generated by Algorithm \ref{alg:Optimization_GME_MI}
converges to a point in $\mathcal{S}$, i.e., a globally optimal solution of \eqref{eq:def:OverallRegularizationModel},
if $\mathcal{S}$ is nonempty\footnote{%
$\mathcal{S} \neq \varnothing$ is automatically guaranteed in many scenarios,
e.g., if $\mathcal{C}$ is bounded.
Since $\mathrm{dom}(J) = \mathbb{R}^{n}$ holds by Lemma \ref{lemm:domain_GME_MI_cost_function}, we have $\mathrm{dom}(J) \cap \mathcal{C} \neq \varnothing$.
Thus, if $\mathcal{C}$ is bounded, then
$\mathcal{S} \neq \varnothing$ holds by \cite[Proposition 11.15]{BC:ConvexAnalysis}
under the condition \eqref{eq:OverallConvexCondition} guaranteeing $J \in \Gamma_{0}(\mathbb{R}^{n})$.
}.
\end{theo}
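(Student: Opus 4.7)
The plan is to translate the first-order optimality condition $\bm{x}^{\star} \in \mathcal{S} \Leftrightarrow \bm{0} \in \partial(J + \iota_{\mathcal{C}})(\bm{x}^{\star})$ into a monotone inclusion on the product space $\mathcal{H}$, identify $T_{\text{\rm GME-MI}}$ with a preconditioned forward--backward operator (preconditioner $\bm{P}$) whose fixed points are exactly the zeros of that inclusion, and then establish averaged nonexpansiveness of $T_{\text{\rm GME-MI}}$ in the $\bm{P}$-metric by a composition argument. Once these are in place, Fact~\ref{fact:KM_Iteration} yields convergence of Algorithm~\ref{alg:Optimization_GME_MI} to a point in $\mathcal{S}$.

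To derive the inclusion, I would first split $\iota_{\mathcal{C}}$ from $J$ via Fact~\ref{fact:SubdifferentialCalculus}(b), then use \eqref{eq:ExpressCostFunc_SumOfConvex} together with the sum/chain rules in Fact~\ref{fact:SubdifferentialCalculus}(a),(c), justified by Assumption~\ref{Assumption:QualificationConditions}(iii),(iv) and Lemma~\ref{lemm:Qualification_conj_psi_squaredBNorm}, to pull subdifferentials through $\bar{\bm{L}}$ and $\bm{B}^{\mathsf{T}}\bm{B}\bm{L}$. Lemma~\ref{lemma:subdifferential_MarginalFunc} is then invoked to peel off the minimization that defines $\psi$, introducing a latent primal variable $\bm{\sigma}^{\star}$ and, via \eqref{eq:relation_subdifferential_conjugate}, dual counterparts that I collect as $\bm{r}^{\star},\bm{\eta}^{\star},\bm{v}^{\star},\bm{\tau}^{\star}$. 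The decomposition $\varphi = f + g \circ \bm{M}$ from Assumption~\ref{asmp:form_seed_func_for_optimization} together with Fact~\ref{fact:SubdifferentialCalculus}(a) then splits $\partial\varphi$ into $\partial f + \bar{\bm{M}}^{\mathsf{T}}\partial g \circ \bar{\bm{M}}$, producing dual multipliers $\bm{\xi}^{\star}$ and $\bm{\zeta}^{\star}$. Assembling everything, the optimality condition becomes $\bm{0} \in (A+B)(\bm{z}^{\star})$ for $\bm{z}^{\star} = (\bm{x}^{\star},\bm{\sigma}^{\star},\bm{v}^{\star},\bm{\tau}^{\star},\bm{r}^{\star},\bm{\eta}^{\star},\bm{\xi}^{\star},\bm{\zeta}^{\star})$, where $A$ is maximally monotone and assembles $\partial\iota_{\mathcal{C}}$, $\partial f$, $\partial g^{*}$, $\partial(\gamma_4 g^{*})$, and $B$ is a single linear operator carrying the symmetric term $\bm{Q}$, the Moreau-type curvature through $\bm{B}^{\mathsf{T}}\bm{B}$, and the skew couplings through $\bm{L}$ and $\bm{M}$.

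With this inclusion in hand, \eqref{eq:def:AveragedOperatorForGME-MI} is to be identified with the preconditioned forward--backward step $T_{\text{\rm GME-MI}} = (\bm{P}+A)^{-1}(\bm{P}-B)$ by matching each component update in \eqref{eq:def:AveragedOperatorForGME-MI} against the corresponding block of $(\bm{P}+A)\hat{\bm{z}} \ni (\bm{P}-B)\bm{z}$; the two-step substitutions involving $2\hat{\bm{x}}-\bm{x}$ and $2\hat{\bm{\sigma}}-\bm{\sigma}$ reflect exactly the off-diagonal blocks of $\bm{P}$ prescribed in \eqref{eq:def:positiveSelfAdjOp}. The equivalence \eqref{eq:Characterization_SolutionSet_AveragedNonexpansiveOp} is then immediate since $\bm{z}^{\star}=T_{\text{\rm GME-MI}}(\bm{z}^{\star})$ reduces algebraically to $\bm{0} \in (A+B)\bm{z}^{\star}$. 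Positive definiteness of $\bm{P}$ is to be verified by a Schur-complement argument: the first two inequalities in \eqref{eq:cond_stepSizeLikeParam} control the $(\bm{x},\bm{r},\bm{v})$- and $(\bm{\sigma},\bm{\eta},\bm{\xi})$-blocks, and the fourth inequality controls the $(\bm{\tau},\bm{\zeta})$-block.

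The hard part is the $\tfrac{\kappa}{2\kappa-1}$-averaged nonexpansiveness of $T_{\text{\rm GME-MI}}$ in $\langle\cdot,\cdot\rangle_{\bm{P}}$. My approach is to write $T_{\text{\rm GME-MI}} = T_{\text{\rm bwd}} \circ T_{\text{\rm fwd}}$, where $T_{\text{\rm bwd}}$ is the resolvent of an operator that is maximally monotone in the $\bm{P}$-metric (hence $\tfrac{1}{2}$-averaged by the characterization recalled in Section~II-C), while $T_{\text{\rm fwd}}$ is a gradient-type step that I need to show is $\tfrac{1}{\kappa}$-averaged in the $\bm{P}$-metric. The latter reduces to a cocoercivity estimate: the symmetric part of $B$ inherits curvature from $\bm{Q}$ and from $\lambda\|\bm{B}\cdot\|^2$, and the third inequality in \eqref{eq:cond_stepSizeLikeParam}, through the factor $\kappa/2 + 2/\kappa$, is precisely what calibrates the step size against this curvature after a Young-type bound absorbs the cross terms created by the off-diagonal blocks of $\bm{P}$, while the skew part of $B$ contributes zero to the averaging inequality. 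Applying Fact~\ref{fact:CompositionAveragedOp} with $\alpha_1 = \tfrac{1}{2}$ and $\alpha_2 = \tfrac{1}{\kappa}$ then gives
\begin{equation*}
\alpha = \frac{\tfrac{1}{2} + \tfrac{1}{\kappa} - \tfrac{1}{\kappa}}{1 - \tfrac{1}{2\kappa}} = \frac{\kappa}{2\kappa-1}.
\end{equation*}
Finally, Fact~\ref{fact:KM_Iteration} applied with $\mu_k = 1$ to the $\tfrac{\kappa}{2\kappa-1}$-averaged operator $T_{\text{\rm GME-MI}}$ yields convergence of the iterates to a fixed point, whose $\bm{x}$-component lies in $\mathcal{S}$ by \eqref{eq:Characterization_SolutionSet_AveragedNonexpansiveOp}, provided $\mathcal{S}\neq\varnothing$.
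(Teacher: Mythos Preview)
Your high-level architecture is correct and matches the paper: reformulate $\bm 0\in\partial(J+\iota_{\mathcal C})(\bm x^\star)$ as a monotone inclusion on $\mathcal H$, identify $T_{\text{GME-MI}}$ with a preconditioned forward--backward operator in the $\bm P$-metric, and conclude via Fact~\ref{fact:CompositionAveragedOp} and Fact~\ref{fact:KM_Iteration}. However, your proposed split of the inclusion is off in a way that breaks the averagedness proof.

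You place the skew couplings (through $\bm L$ and $\bm M$) in the forward operator $B$ and claim that ``the skew part of $B$ contributes zero to the averaging inequality.'' This is false. Writing $S=\bm P^{-1}B$, the condition that $\mathrm{Id}-S$ be $\tfrac1\kappa$-averaged in $\|\cdot\|_{\bm P}$ is equivalent to
\[
\tfrac{1}{\kappa}\,\langle \bm P^{-1}B\bm z,\,B\bm z\rangle \;\le\; 2\,\langle \bm z,\,B\bm z\rangle\qquad(\forall\,\bm z).
\]
The skew part of $B$ annihilates the right-hand side but contributes a nonnegative term $\langle \bm P^{-1}\bm N\bm z,\bm N\bm z\rangle$ (plus cross terms) to the left-hand side, so it makes the inequality \emph{harder}, not neutral. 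Moreover, if the skew block sits in the forward step, matching \eqref{eq:def:AveragedOperatorForGME-MI} fails: the $2\hat{\bm x}-\bm x$ and $2\hat{\bm\sigma}-\bm\sigma$ substitutions arise precisely because the off-diagonal blocks of $\bm P$ \emph{and} the skew matrix $\bm N$ both act on $\hat{\bm z}$ in the backward step, giving a combined factor $2$; with $\bm N$ in the forward step you would get, e.g., $\bm r+\bm L\hat{\bm x}$ instead of $\bm r+\bm L(2\hat{\bm x}-\bm x)$.

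The paper's decomposition is $T_{\text{GME-MI}}=(\mathrm{Id}+\bm P^{-1}(\mathcal D+\bm N))^{-1}\circ(\mathrm{Id}-\bm P^{-1}H)$, where $\mathcal D$ collects the subdifferentials, $\bm N$ is the skew coupling, and $H$ is the \emph{purely symmetric} affine map built from $\bm Q$ and $\lambda\bm B^{\mathsf T}\bm B$. The point is that $\mathcal D+\bm N$ is maximally monotone (sum of a maximally monotone subdifferential and a full-domain skew linear map), so its $\bm P$-resolvent is $\tfrac12$-averaged, while the $\tfrac1\kappa$-averagedness of $\mathrm{Id}-\bm P^{-1}H$ reduces to the matrix inequality $\bm P_1-\tfrac{\kappa}{2}\bm R\succeq\bm O$, which is where the third condition in \eqref{eq:cond_stepSizeLikeParam} (and the overall convexity condition \eqref{eq:OverallConvexCondition}) are actually used. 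Your Schur-complement sketch for $\bm P\succ\bm O$ also omits this: the $(\bm x,\bm v,\bm r)$-block needs the first \emph{and} third inequalities in \eqref{eq:cond_stepSizeLikeParam} together with \eqref{eq:OverallConvexCondition}, not just the first.
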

\begin{algorithm}[t]
\SetCommentSty{textrm}
\caption{Solver for GME-MI model \eqref{eq:def:OverallRegularizationModel} with 
\eqref{eq:def:MI_penalty}, \eqref{eq:def:GMEMI_penalty}, and \eqref{eq:represent_seed_function}}
\label{alg:Optimization_GME_MI}
\setstretch{1.4}
\KwIn{$\bm{B}$ satisfying \eqref{eq:OverallConvexCondition},
$\gamma_1,\gamma_2,\gamma_3,\gamma_4$ satisfying \eqref{eq:cond_stepSizeLikeParam},
$\bm{x}_{0}\in \mathbb{R}^{n},
\bm{\sigma}_{0}\in\mathbb{R}^{l},
\bm{v}_{0}\in \mathbb{R}^{m},
\bm{\tau}_{0} \in \mathbb{R}^{l},
\bm{r}_0 \in \mathbb{R}^{m},
\bm{\eta}_0 \in \mathbb{R}^{l},
\bm{\xi}_0 \in \mathbb{R}^{p},
\bm{\zeta}_0 \in \mathbb{R}^{p}$.}
\For{$k = 0,1,2,\ldots$}{
		$\bm{x}_{k+1} = P_{\mathcal{C}}[\bm{x}_k-\gamma_1(\bm{Q}\bm{x}_{k} - \bm{A}^{\top}\bm{y}+\lambda\bm{L}^{\top}(\bm{B}^{\top}\bm{B}\bm{v}_k+\bm{r}_k))]$\;
		$\bm{\sigma}_{k+1} = \bm{\sigma}_k-\gamma_2(\bm{\eta}_k+\bm{M}^{\top}\bm{\xi}_{k})$\;
		$\bm{u}_{k+1} = \bm{L}(2\bm{x}_{k+1}-\bm{x}_{k})$\;
		$(\bm{v}_{k+1},\bm{\tau}_{k+1})$ $=\mathrm{prox}_{\gamma_3 f}(\bm{v}_k+\gamma_3\bm{B}^{\top}\bm{B}(\bm{u}_{k+1}-\bm{v}_k),\bm{\tau}_k-\gamma_3\bm{M}^{\top}\bm{\zeta}_{k})$\;
		$(\bm{r}_{k+1},\bm{\eta}_{k+1}) =\mathrm{prox}_{f^{*}}(\bm{r}_k + \bm{u}_{k+1},\bm{\eta}_k+2\bm{\sigma}_{k+1}-\bm{\sigma}_k)$\;
		$\bm{\xi}_{k+1}=\mathrm{prox}_{g^{*}}(\bm{\xi}_k+\bm{M}(2\bm{\sigma}_{k+1}-\bm{\sigma}_{k}))$\;
		$\bm{\zeta}_{k+1}=\mathrm{prox}_{\gamma_4 g^{*}}(\bm{\zeta}_k + \gamma_4\bm{M}(2\bm{\tau}_{k+1}-\bm{\tau}_{k}))$\;
		}
\end{algorithm}
\begin{proof}
First, we derive the relation \eqref{eq:Characterization_SolutionSet_AveragedNonexpansiveOp}.
Under the condition \eqref{eq:OverallConvexCondition},
applying Fact \ref{fact:SubdifferentialCalculus}(b) 
to $\partial (J+\iota_{\mathcal{C}})(\bm{x})$ with the expression \eqref{eq:ExpressCostFunc_SumOfConvex} repeatedly by
the differentiability of the first three terms on the right-hand side of \eqref{eq:ExpressCostFunc_SumOfConvex} on $\mathbb{R}^{n}$,
$\mathrm{dom}(\psi \circ \bm{L}) = \mathbb{R}^{n}$ by Lemma \ref{lemm:psi_lsc_convex_fullDomain}, and Lemma \ref{lemm:Qualification_conj_psi_squaredBNorm}(i), we obtain
\begin{align*}
\partial (J+\iota_{\mathcal{C}})(\bm{x}) = \bm{Q}\bm{x}-\bm{A}^{\top}\bm{y}
+\lambda\partial(\psi \circ \bm{L})(\bm{x}) +\partial\iota_{\mathcal{C}}(\bm{x}) 
+\lambda\partial\left(\left(\psi + \frac{1}{2}\|\bm{B}\cdot \|^2\right)^{*}\circ\bm{B}^{\top}\bm{B}\bm{L} \right)(\bm{x}).
\end{align*}
Furthermore, Lemma \ref{lemm:Qualification_conj_psi_squaredBNorm}(ii) and Fact \ref{fact:SubdifferentialCalculus}(c)
yield
\begin{align*}
\partial\left(\left(\psi + \frac{1}{2}\|\bm{B}\cdot \|^2\right)^{*}
\circ\bm{B}^{\top}\bm{B}\bm{L} \right)(\bm{x})
= \bm{L}^{\top}\bm{B}^{\top}\bm{B}\left(\partial\left(\psi + \frac{1}{2}\|\bm{B}\cdot \|^2\right)^{*}(\bm{B}^{\top}\bm{B}\bm{L}\bm{x})\right).
\end{align*}
In addition, by the property \eqref{eq:relation_subdifferential_conjugate}, we obtain
\begin{align*}
\bm{v} \in \partial\left(\psi + \frac{1}{2}\|\bm{B}\cdot \|^2\right)^{*}(\bm{B}^{\top}\bm{B}\bm{L}\bm{x})
&\Leftrightarrow
\bm{B}^{\top}\bm{B}\bm{L}\bm{x} \in
\partial\left(\psi + \frac{1}{2}\|\bm{B}\cdot \|^2\right)(\bm{v})\\
&\Leftrightarrow
\bm{B}^{\top}\bm{B}\bm{L}\bm{x} \in
\partial\psi(\bm{v}) + \bm{B}^{\top}\bm{B}\bm{v},
\end{align*}
where the last equivalence holds since $(1/2)\|\bm{B}\cdot \|^2$ is differentiable on $\mathbb{R}^{m}$.
Thus, we have
\begin{align*}
\bm{x}^{\star} \in \mathcal{S}
&\Leftrightarrow \bm{0}\in \partial (J+\iota_{\mathcal{C}})(\bm{x}^{\star})\\
&\Leftrightarrow 
\left\{\begin{aligned}
&\bm{0} \in \bm{Q}\bm{x}^{\star}-\bm{A}^{\top}\bm{y}+\lambda\partial\left(\psi \circ \bm{L}\right)(\bm{x}^{\star}) +\lambda\bm{L}^{\top}\bm{B}^{\top}\bm{B}\bm{v}^{\star} + \bm{t}^{\star}\\
&\bm{B}^{\top}\bm{B}\bm{L}\bm{x}^{\star} \in
\partial\psi(\bm{v}^{\star}) + \bm{B}^{\top}\bm{B}\bm{v}^{\star}\\
&\bm{t}^{\star} \in \partial\iota_{\mathcal{C}}(\bm{x}^{\star}).
\end{aligned}\right.
\end{align*}
Since Lemma \ref{lemm:nonempty_partialmin_phi} guarantees $\argmin_{\bm{\sigma}\in\mathbb{R}^{l}}\varphi(\bm{L}\bm{x}^{\star},\bm{\sigma}) \neq \varnothing$
and $\argmin_{\bm{\sigma}\in\mathbb{R}^{l}}\varphi(\bm{v}^{\star},\bm{\sigma}) \neq \varnothing$,
we can introduce auxiliary variables $\bm{\sigma}^{\star}$ and $\bm{\tau}^{\star}$ as follows:
\begin{align*}
\bm{x}^{\star} \in \mathcal{S}
&\Leftrightarrow 
\left\{\begin{aligned}
&-\bm{Q}\bm{x}^{\star}+\bm{A}^{\top}\bm{y}-\lambda\bm{L}^{\top}\bm{B}^{\top}\bm{B}\bm{v}^{\star} -\bm{t}^{\star} \in \lambda\partial\left(\psi \circ \bm{L}\right)(\bm{x}^{\star})\\
&\bm{\sigma}^{\star} \in \argmin_{\bm{\sigma}\in\mathbb{R}^{l}}\varphi(\bm{L}\bm{x}^{\star},\bm{\sigma})\\
&\bm{B}^{\top}\bm{B}\bm{L}\bm{x}^{\star}-\bm{B}^{\top}\bm{B}\bm{v}^{\star} \in 
\partial\psi(\bm{v}^{\star})\\
&\bm{\tau}^{\star} \in \argmin_{\bm{\sigma}\in\mathbb{R}^{l}}\varphi(\bm{v}^{\star},\bm{\sigma})\\
&\bm{t}^{\star} \in \partial\iota_{\mathcal{C}}(\bm{x}^{\star})
\end{aligned}\right.\\
&\Leftrightarrow 
\left\{\begin{aligned}
&(-\bm{Q}\bm{x}^{\star}+\bm{A}^{\top}\bm{y}-\lambda\bm{L}^{\top}\bm{B}^{\top}\bm{B}\bm{v}^{\star}-\bm{t}^{\star},\bm{0})
\in \lambda\partial \left(\varphi\circ\bar{\bm{L}}\right)(\bm{x}^{\star},\bm{\sigma}^{\star})\\
&(\bm{B}^{\top}\bm{B}\bm{L}\bm{x}^{\star}-\bm{B}^{\top}\bm{B}\bm{v}^{\star},\bm{0}) \in 
\partial\varphi(\bm{v}^{\star},\bm{\tau}^{\star})\\
&\bm{t}^{\star} \in \partial\iota_{\mathcal{C}}(\bm{x}^{\star}),
\end{aligned}\right.
\end{align*}
where the last equivalence follows from Lemma \ref{lemma:subdifferential_MarginalFunc}.
Applying Fact \ref{fact:SubdifferentialCalculus}(c) under Assumption \ref{Assumption:QualificationConditions}(ii) 
and Fact \ref{fact:SubdifferentialCalculus}(a) under Assumption \ref{Assumption:QualificationConditions}(iii)
with the definitions of $\varphi$, $\bar{\bm{L}}$,
and $\bar{\bm{M}}$ in \eqref{eq:represent_seed_function}, \eqref{eq:def:L_bar}, and \eqref{eq:def:M_bar},
we obtain
\begin{align*}
\partial \left(\varphi\circ\bar{\bm{L}}\right)(\bm{x}^{\star},\bm{\sigma}^{\star}) &= 
\bar{\bm{L}}^{\top}[\partial\varphi (\bm{L}\bm{x}^{\star},\bm{\sigma}^{\star})]\\
&=\bar{\bm{L}}^{\top}\left[\partial\left(f + g\circ \bar{\bm{M}} \right) (\bm{L}\bm{x}^{\star},\bm{\sigma}^{\star})\right]\\
&=\bar{\bm{L}}^{\top}\left[\partial f(\bm{L}\bm{x}^{\star},\bm{\sigma}^{\star}) +\bar{\bm{M}}^{\top}(\partial g(\bm{M}\bm{\sigma}^{\star})) \right],
\end{align*}
and similarly,
\begin{align*}
\partial\varphi(\bm{v}^{\star},\bm{\tau}^{\star}) = \partial f(\bm{v}^{\star},\bm{\tau}^{\star}) +\bar{\bm{M}}^{\top}(\partial g(\bm{M}\bm{\tau}^{\star})).
\end{align*}
Thus, using the relations
\begin{align*}
(\bm{r}^{\star},\bm{\eta}^{\star}) \in \partial f(\bm{L}\bm{x}^{\star},\bm{\sigma}^{\star}) &\Leftrightarrow
(\bm{L}\bm{x}^{\star},\bm{\sigma}^{\star}) \in \partial f^{*}(\bm{r}^{\star},\bm{\eta}^{\star}),\\
\bm{\xi}^{\star} \in \partial g(\bm{M}\bm{\sigma}^{\star})&\Leftrightarrow
\bm{M}\bm{\sigma}^{\star} \in \partial g^{*}(\bm{\xi}^{\star}),\\
\bm{\zeta}^{\star} \in \partial g(\bm{M}\bm{\tau}^{\star})&\Leftrightarrow
\bm{M}\bm{\tau}^{\star} \in \partial g^{*}(\bm{\zeta}^{\star}),
\end{align*}
since $\bar{\bm{L}}^{\top}[(\bm{r}^{\star},\bm{\eta}^{\star})+\bar{\bm{M}}^{\top}\bm{\xi}^{\star}] = (\bm{L}^{\top}\bm{r}^{\star},\bm{\eta}^{\star}+\bm{M}^{\top}\bm{\xi}^{\star})$
and $\bar{\bm{M}}^{\top}\bm{\zeta}^{\star} = (\bm{0},\bm{M}^{\top}\bm{\zeta}^{\star})$,
we deduce
\begin{align*}
\bm{x}^{\star} \in \mathcal{S}
\Leftrightarrow 
\left\{\begin{aligned}
&\bm{0} \in \bm{Q}\bm{x}^{\star} -\bm{A}^{\top}\bm{y} +\lambda\bm{L}^{\top}\bm{B}^{\top}\bm{B}\bm{v}^{\star} + \lambda\bm{L}^{\top}\bm{r}^{\star} +\partial\iota_{\mathcal{C}}(\bm{x}^{\star})\\
&\bm{0} =\lambda\bm{\eta}^{\star}+ \lambda\bm{M}^{\top}\bm{\xi}^{\star}\\
&(\bm{0},\bm{0}) \in 
\lambda\partial f(\bm{v}^{\star},\bm{\tau}^{\star})+\lambda(\bm{B}^{\top}\bm{B}(
\bm{v}^{\star}-\bm{L}\bm{x}^{\star}),\bm{M}^{\top}\bm{\zeta}^{\star})\\
&(\bm{0},\bm{0}) \in \lambda\partial f^{*}(\bm{r}^{\star},\bm{\eta}^{\star}) - \lambda(\bm{L}\bm{x}^{\star},\bm{\sigma}^{\star})\\
&\bm{0} \in \lambda\partial g^{*}(\bm{\xi}^{\star})-\lambda\bm{M}\bm{\sigma}^{\star}\\
&\bm{0} \in \lambda\partial g^{*}(\bm{\zeta}^{\star}) - \lambda\bm{M}\bm{\tau}^{\star}.
\end{aligned}\right.
\end{align*}
By using the notation
$\bm{z} = (\bm{x},
\bm{\sigma},
\bm{v},
\bm{\tau},
\bm{r},
\bm{\eta},
\bm{\xi},
\bm{\zeta})$,
the last expression can be rewritten as
\begin{align}
\label{eq:CharacterizeSolution_MonotoneInclusion}
\bm{x}^{\star} \in \mathcal{S} &\Leftrightarrow \bm{0} \in H(\bm{z}^{\star}) + \mathcal{D}(\bm{z}^{\star}) + \bm{N}\bm{z}^{\star}
\Leftrightarrow
\bm{P}\bm{z}^{\star} - H(\bm{z}^{\star}) \in \bm{P}\bm{z}^{\star}+ \mathcal{D}(\bm{z}^{\star}) + \bm{N}\bm{z}^{\star},
\end{align}
where $H\colon\mathcal{H}\rightarrow\mathcal{H}$ is the affine operator, $\mathcal{D}\colon\mathcal{H}\rightarrow2^{\mathcal{H}}$ is the set-valued operator consisting of subdifferentials,
and $\bm{N}\in\mathbb{R}^{\mathrm{dim}\mathcal{H}\times\mathrm{dim}\mathcal{H}}$ is the skew-symmetric matrix, respectively defined by
\begin{align}
\label{eq:def:AffineOp_H}
H(\bm{z}) &\coloneqq  (\bm{Q}\bm{x} -\bm{A}^{\top}\bm{y},\bm{0},\lambda\bm{B}^{\top}\bm{B}\bm{v},\bm{0},\bm{0},\bm{0},\bm{0},\bm{0}),\\
\label{eq:def:subDiffOp_D}
\mathcal{D}(\bm{z})
&\coloneqq  \partial\iota_{\mathcal{C}}(\bm{x})\times\{\bm{0}\}\times 
\lambda\partial f(\bm{v},\bm{\tau})
\times \lambda\partial f^{*}(\bm{r},\bm{\eta})\times
\lambda\partial g^{*}(\bm{\xi})\times
\lambda\partial g^{*}(\bm{\zeta}),\\
\label{eq:def:skewOp_N}
\bm{N} &\coloneqq  \bm{\Pi}^{\top}\begin{bmatrix}
\bm{N}_1 &  &\\
 & \bm{N}_2 & \\
 &  & \bm{N}_3
\end{bmatrix}\bm{\Pi},
\end{align}
where $\bm{\Pi}\in\mathbb{R}^{\mathrm{dim}\mathcal{H}\times\mathrm{dim}\mathcal{H}}$ is the permutation matrix defined as
\begin{align}
\label{eq:def:permutationOp}
\bm{\Pi}\colon(\bm{x},
\bm{\sigma},
\bm{v},
\bm{\tau},
\bm{r},
\bm{\eta},
\bm{\xi},
\bm{\zeta})
\mapsto(\bm{x},
\bm{v},
\bm{r},
\bm{\sigma},
\bm{\eta},
\bm{\xi},
\bm{\tau},
\bm{\zeta}),
\end{align}
and
\begin{align*}
\bm{N}_{1}&\coloneqq \lambda\begin{bmatrix}
\bm{O} & \bm{L}^{\top}\bm{B}^{\top}\bm{B} & \bm{L}^{\top}\\
- \bm{B}^{\top}\bm{B}\bm{L} & \bm{O} & \bm{O}\\
-\bm{L} & \bm{O} & \bm{O}
\end{bmatrix},\\
\bm{N}_{2}&\coloneqq \lambda\begin{bmatrix}
\bm{O} & \bm{I} & \bm{M}^{\top}\\
-\bm{I} & \bm{O} & \bm{O}\\
- \bm{M} & \bm{O} & \bm{O}
\end{bmatrix},\\
\bm{N}_{3}&\coloneqq \lambda\begin{bmatrix}
\bm{O} &  \bm{M}^{\top}\\
- \bm{M} & \bm{O}
\end{bmatrix}.
\end{align*}
Meanwhile,
since the proximity operator is the resolvent of subdifferential (see \eqref{eq:relationProxSubdifferential}),
we derive from \eqref{eq:def:AveragedOperatorForGME-MI} that
\begin{align}
\nonumber
&T_{\text{\rm GME-MI}}(\bm{z}) = \hat{\bm{z}}\\
\nonumber
\Leftrightarrow &
\left\{\begin{aligned}
&
\bm{x}-\gamma_1(\bm{Q}\bm{x} -\bm{A}^{\top}\bm{y}+\lambda\bm{L}^{\top}\bm{B}^{\top}\bm{B}\bm{v}+\lambda\bm{L}^{\top}\bm{r})\in (\mathrm{Id}+\partial \iota_{\mathcal{C}})(\hat{\bm{x}})
\\
&\bm{\sigma}-\gamma_2(\bm{\eta}+\bm{M}^{\top}\bm{\xi}) = \hat{\bm{\sigma}}\\
&
(\bm{v}+\gamma_3\bm{B}^{\top}\bm{B}(\bm{L}(2\hat{\bm{x}}-\bm{x})-\bm{v}),
\bm{\tau}-\gamma_3\bm{M}^{\top}\bm{\zeta})
\in(\mathrm{Id}+\gamma_3\partial f)(\hat{\bm{v}},\hat{\bm{\tau}})\\
&
(\bm{r} + \bm{L}(2\hat{\bm{x}}-\bm{x}),
\bm{\eta}+2\hat{\bm{\sigma}}-\bm{\sigma})\in
(\mathrm{Id}+\partial f^{*})(\hat{\bm{r}},\hat{\bm{\eta}})\\
&\bm{\xi}+\bm{M}(2\hat{\bm{\sigma}}-\bm{\sigma})\in(\mathrm{Id}+\partial g^{*})(\hat{\bm{\xi}})\\
&\bm{\zeta} + \gamma_4\bm{M}(2\hat{\bm{\tau}}-\bm{\tau})\in (\mathrm{Id}+\gamma_4\partial g^{*})(\hat{\bm{\zeta}})
\end{aligned}\right.\\
\nonumber\Leftrightarrow&
\left\{\begin{aligned}
&\frac{1}{\gamma_1}\bm{x}-\lambda\bm{L}^{\top}\bm{B}^{\top}\bm{B}\bm{v}-\lambda\bm{L}^{\top}\bm{r}-\bm{Q}\bm{x} + \bm{A}^{\top}\bm{y} = \frac{1}{\gamma_1}\hat{\bm{x}}+\partial \iota_{\mathcal{C}}(\hat{\bm{x}})\\
&\frac{\lambda}{\gamma_2}\bm{\sigma}-\lambda\bm{\eta}- \lambda\bm{M}^{\top}\bm{\xi} =
\frac{\lambda}{\gamma_2}\hat{\bm{\sigma}}\\
&\left(\frac{\lambda}{\gamma_3}\bm{v}-\lambda\bm{B}^{\top}\bm{B}\bm{L}\bm{x}-\lambda \bm{B}^{\top}\bm{B}\bm{v},
\frac{\lambda}{\gamma_3}\bm{\tau}-\lambda\bm{M}^{\top}\bm{\zeta}\right) \in
\frac{\lambda}{\gamma_3}(\hat{\bm{v}},\hat{\bm{\tau}})+
\lambda\partial f(\hat{\bm{v}},\hat{\bm{\tau}})-
(2\lambda\bm{B}^{\top}\bm{B}\bm{L}\hat{\bm{x}},\bm{0})\\
&
(\lambda \bm{r}-\lambda \bm{L}\bm{x},
\lambda\bm{\eta}-\lambda\bm{\sigma})\in
\lambda(\hat{\bm{r}},
\hat{\bm{\eta}})
+ \lambda\partial f^{*}(
\hat{\bm{r}},
\hat{\bm{\eta}}) - 
(2\lambda\bm{L}\hat{\bm{x}},
2\lambda\hat{\bm{\sigma}})\\
&\lambda \bm{\xi}-\lambda\bm{M}\bm{\sigma} \in
\lambda \hat{\bm{\xi}}+\lambda\partial g^{*}(\hat{\bm{\xi}})-2\lambda\bm{M}\hat{\bm{\sigma}}\\
&\frac{\lambda}{\gamma_4}\bm{\zeta}- \lambda\bm{M}\bm{\tau} \in 
\frac{\lambda}{\gamma_4}\hat{\bm{\zeta}} +\lambda\partial g^{*}(\hat{\bm{\zeta}}) - 2\lambda\bm{M}\hat{\bm{\tau}}
\end{aligned}\right.\\
\label{eq:GME_MI_Op_Representation}
\Leftrightarrow{}&\bm{P}\bm{z} - H(\bm{z}) \in  \bm{P}\hat{\bm{z}} + \mathcal{D}(\hat{\bm{z}})+\bm{N}\hat{\bm{z}}.
\end{align}
This relation for $\bm{z} = \hat{\bm{z}} = \bm{z}^{\star}$ and \eqref{eq:CharacterizeSolution_MonotoneInclusion} yield the claim \eqref{eq:Characterization_SolutionSet_AveragedNonexpansiveOp}.

Next, we prove that $T_{\text{\rm GME-MI}}$ is $\kappa/(2\kappa-1)$-averaged nonexpansive in $(\mathcal{H},\langle\cdot,\cdot\rangle_{\bm{P}},\|\cdot\|_{\bm{P}})$.
The positive definiteness of $\bm{P}$ is shown in \ref{proof:positive_definite_P}.
Since $\mathcal{D}$ in \eqref{eq:def:subDiffOp_D}
is the product of subdifferentials, it is maximally monotone
in $(\mathcal{H},\langle\cdot,\cdot\rangle,\|\cdot\|)$
by \cite[Theorem 20.25 and Proposition 20.23]{BC:ConvexAnalysis}.
Since $\bm{N}$ in \eqref{eq:def:skewOp_N} is
skew-symmetric,
it is maximally monotone
in $(\mathcal{H},\langle\cdot,\cdot\rangle,\|\cdot\|)$ by \cite[Example 20.35]{BC:ConvexAnalysis}.
Thus, since the domain of $\bm{N}$ is $\mathcal{H}$,
$\mathcal{D}+\bm{N}$ is maximally monotone in $(\mathcal{H},\langle\cdot,\cdot\rangle,\|\cdot\|)$
by \cite[Corollary 25.5]{BC:ConvexAnalysis}.
The positive definiteness of $\bm{P}$ and the maximal monotonicity of $\mathcal{D}+\bm{N}$
imply that $\bm{P}^{-1}\circ (\mathcal{D} + \bm{N})$
is maximally monotone in $(\mathcal{H},\langle\cdot,\cdot\rangle_{\bm{P}},\|\cdot\|_{\bm{P}})$
by \cite[Proposition 20.24]{BC:ConvexAnalysis}. Thus,
its resolvent
$(\mathrm{Id}+\bm{P}^{-1}\circ (\mathcal{D} + \bm{N}))^{-1}$ is single-valued
and $1/2$-averaged nonexpansive in $(\mathcal{H},\langle\cdot,\cdot\rangle_{\bm{P}},\|\cdot\|_{\bm{P}})$.
Since $(\mathrm{Id}+\bm{P}^{-1}\circ (\mathcal{D} + \bm{N}))^{-1}$ is single-valued, we 
derive from \eqref{eq:GME_MI_Op_Representation} that
\begin{align*}
T_{\text{\rm GME-MI}}(\bm{z}) = \hat{\bm{z}}
&\Leftrightarrow \bm{P}\bm{z} - H(\bm{z}) \in \bm{P}\hat{\bm{z}}+ \mathcal{D}(\hat{\bm{z}}) + \bm{N}\hat{\bm{z}}\\
&\Leftrightarrow 
\bm{z}-\bm{P}^{-1} H(\bm{z}) \in \hat{\bm{z}} + \bm{P}^{-1} (\mathcal{D} + \bm{N})(\hat{\bm{z}})\\
&\Leftrightarrow 
 [(\mathrm{Id}+\bm{P}^{-1}\circ (\mathcal{D} + \bm{N}))^{-1}\circ(\mathrm{Id}-\bm{P}^{-1}\circ H)](\bm{z})
 =\hat{\bm{z}},
\end{align*}
i.e.,
\begin{align*}
T_{\text{\rm GME-MI}}
= (\mathrm{Id}+\bm{P}^{-1} \circ (\mathcal{D} + \bm{N}))^{-1} \circ (\mathrm{Id}-\bm{P}^{-1} \circ H).
\end{align*}
In $(\mathcal{H},\langle\cdot,\cdot\rangle_{\bm{P}},\|\cdot\|_{\bm{P}})$,
the $1/2$-averaged nonexpansiveness of
$(\mathrm{Id}+\bm{P}^{-1} \circ (\mathcal{D} + \bm{N}))^{-1}$
and the $1/\kappa$-averaged nonexpansiveness of
$\mathrm{Id}-\bm{P}^{-1} \circ H$, proved in \ref{proof:AveragedNonexpansive_AffineOp_H},
yield that
$T_{\text{\rm GME-MI}}$ is $\kappa/(2\kappa-1)$-averaged nonexpansive by Fact \ref{fact:CompositionAveragedOp}.
Finally,
since Algorithm \ref{alg:Optimization_GME_MI} is designed
to have $\bm{z}_{k+1} = T_{\text{\rm GME-MI}}(\bm{z}_k)$,
the averaged nonexpansiveness
of $T_{\text{\rm GME-MI}}$
and the relation \eqref{eq:Characterization_SolutionSet_AveragedNonexpansiveOp} yield that
$(\bm{x}_k)_{k\in\mathbb{N}}$
converges to a point in $\mathcal{S}$ by Fact \ref{fact:KM_Iteration}.
\end{proof}

\begin{rmrk}[Computational cost of Algorithm \ref{alg:Optimization_GME_MI}]
Algorithm \ref{alg:Optimization_GME_MI} consists of
matrix-vector multiplication, the proximity operators, and the projection onto the constraint set.
Matrix-vector multiplication admits efficient implementation in general.
This paper focuses on simple constraints, e.g., the box constraint for the dynamic range of signal,
so that the projection onto the constraint set can be easily computed.
While computational cost of the proximity operators
depends on the specific scenario,
the proximity operators needed for
the GME-LOP-$\ell_2/\ell_1$ and GME-TGV models
have linear computational complexity (see \ref{appendix:ComputeProxOp}).
Furthermore, as seen from \ref{appendix:ComputeProxOp},
these proximity operators can be computed component-wise,
and thus Algorithm \ref{alg:Optimization_GME_MI} is inherently suitable for parallel computation,
which could reduce the computation time.
\end{rmrk}

\begin{rmrk}[Flexibility of proposed framework]\hfill
\begin{enumerate}
\item[a)] (Combination with various observation models) Since $\bm{A}$ in \eqref{eq:def:OverallRegularizationModel} can be
any linear matrix, the proposed framework is applicable to various linear inverse problems, e.g.,
denoising, deconvolution, compressed sensing, and remote sensing, among many others.

\item[b)] (Enhancement of various MI penalties)
While we focus on enhancement of the basic forms of
the LOP-$\ell_2/\ell_1$ and TGV penalties in Example \ref{exmp:GME_MI_penalties} to simplify the exposition,
the proposed framework is potentially applicable to enhancement of their variants
for diverse scenarios, e.g.,
\cite{Kuroda:GraphSparse}
for graph-structured sparse signals, \cite{Kuroda:PSDEstimation} for power spectral densities,
\cite{Arai:TF} for time-frequency representations,
\cite{Ferstl:TGV} for depth images,
\cite{Bredies:TGV_BookChapter,Ono:VTGV} for multi-channel images,
\cite{Ranftl:NonLocalTGV} for signals with nonlocal similarities,
and \cite{Gao:OTGV} for textures in images.
\end{enumerate}
\end{rmrk}

\section{Numerical Examples}
\label{sect:Experiment}
To demonstrate the effectiveness of the proposed framework to enhance MI penalties,
we present numerical examples
of the GME-MI models shown in Example \ref{exmp:GME_MI_penalties} for enhancement of the LOP-$\ell_{2}/\ell_{1}$ and TGV penalties.

\subsection{Estimation of Block-Sparse Signals}
\label{sect:Experiment_blocksparse}
We conduct numerical experiments corresponding to the scenario in Examples \ref{exmp:MI_penalties}--\ref{exmp:GME_MI_penalties}(a)
for block-sparse estimation with unknown block partition.
Note that we adopt this scenario because the LOP-$\ell_{2}/\ell_{1}$ penalty is proposed
to exploit block-sparsity without the knowledge of concrete block partition.
More precisely, we consider the estimation of block-sparse signal $\bm{x}_{\text{\rm org}} \in \mathcal{C}\coloneqq  \mathbb{R}^{n}$
from noisy compressive measurements $\bm{y} = \bm{A}\bm{x}_{\text{\rm org}} + \bm{\varepsilon} \in \mathbb{R}^{d}$,
where the entries of $\bm{A} \in \mathbb{R}^{d\times n}$
are drawn from i.i.d.~Gaussian distribution $\mathcal{N}(0,1)$, 
$\bm{\varepsilon} \in \mathbb{R}^{d}$ is the white Gaussian noise, and $d < n$.
The block-sparse signal $\bm{x}_{\text{\rm org}}$
with $4$ nonzero blocks, $80$ nonzero components, and $n = 256$
is randomly generated by the scheme used in \cite{Kuroda:BlockSparse}.
Note that the block partition is randomly generated for each trial to investigate
the average performance for various block partitions.
Amplitudes of nonzero components are drawn from i.i.d.~$\mathcal{N}(0,1)$.

As an instance of the GME-MI model,
the GME-LOP-$\ell_2/\ell_1$ penalty $\Psi_{\bm{B}, \alpha}^{\text{\rm LOP}}$
presented in Example \ref{exmp:GME_MI_penalties}(a) is used in Definition \ref{defn:GMEMI_Regularization_Model}.
Since $\bm{x}_{\text{\rm org}}$ itself is block-sparse in the experiments, we set $\bm{L} = \bm{I}$.
Due to $\bm{L} = \bm{I}$,
we can set $\bm{B}$ satisfying \eqref{eq:OverallConvexCondition} by
$\bm{B} = \sqrt{\theta/\lambda}\bm{A}$ with any $\theta \in [0,1]$.

\begin{figure}[t]
\centering
\subfloat[\scriptsize  NMSE versus the number of measurements, where the SNR is fixed to $40$dB.]{\includegraphics[width=0.4\columnwidth]{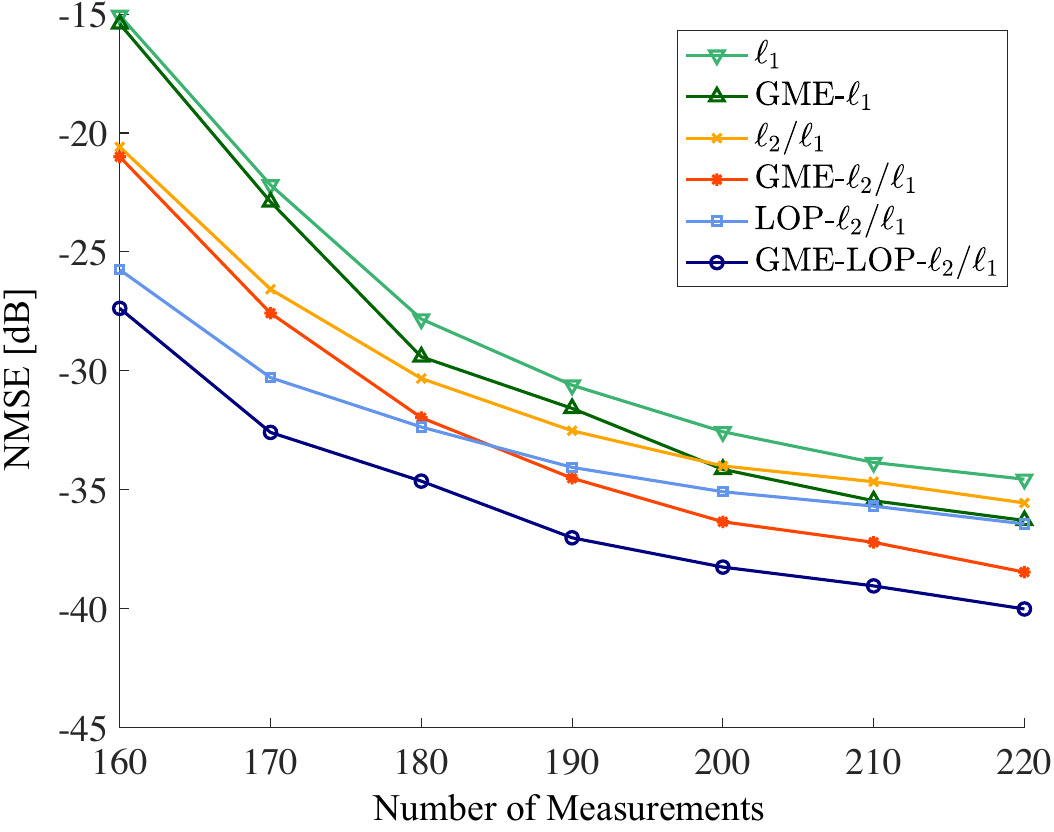}}
\hspace{20pt}
\centering
\subfloat[\scriptsize  NMSE versus the SNR, where the number of measurements is fixed to $220$.]{\includegraphics[width=0.4\columnwidth]{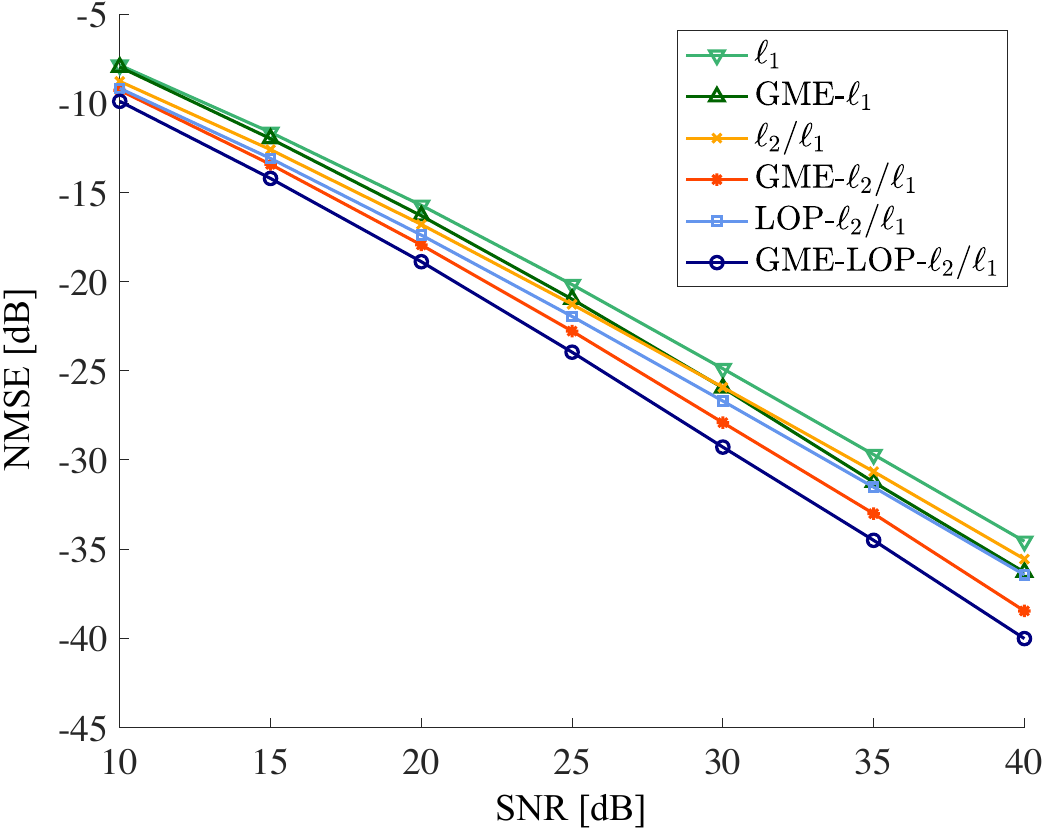}}
\caption{Average NMSEs for estimation of block-sparse signals.}
\label{fig:NMSE_BlockSparse}
\end{figure}
\begin{figure}[t]
  \centering
    \includegraphics[width=0.65\columnwidth]{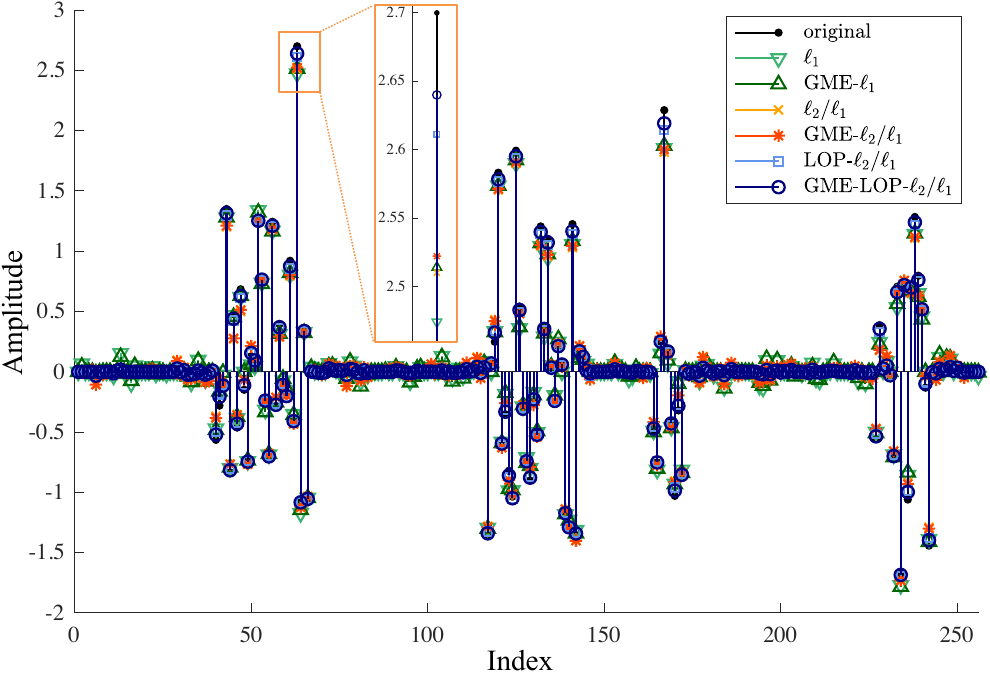}
  \caption{Original block-sparse signal and the estimates for a trial, where the number of measurements is $160$ and the SNR is $40$dB.}
  \label{fig:trial_experiment_BlockSparse}
\end{figure}

We compare the proposed GME-LOP-$\ell_{2}/\ell_{1}$ penalty with
the LOP-$\ell_{2}/\ell_{1}$ penalty given in Example \ref{exmp:MI_penalties}(a)
to investigate the effectiveness of enhancement by the proposed framework.
For reference, we also show
the GME penalty $R_{\bm{B}}(\bm{x}) \coloneqq  R(\bm{x}) - \min_{\bm{v} \in \mathbb{R}^{n}}[R(\bm{v}) + (1/2)\|\bm{B}(\bm{x}-\bm{v})\|^2]$ \cite{Abe:LiGME}
for convex prox-friendly penalties:
$R = \|\cdot\|_{2,1}$ for block-sparsity with fixed blocks
(proposed in \cite{Kitahara:LiGME_MRI,Liu:GroupGMC})
and $R = \|\cdot\|_{1}$ for non-structured sparsity (proposed as GMC penalty \cite{Selesnick:GMC}).
Pre-enhanced $\ell_{2}/\ell_{1}$ and $\ell_1$ penalties
are also shown for reference.
All the penalties are combined with the square error $(1/2)\|\bm{y}-\bm{A}\bm{x}\|^2$.

The GME-LOP-$\ell_{2}/\ell_{1}$ model is solved by Algorithm \ref{alg:Optimization_GME_MI}
with the representation given in Example \ref{exmp:representation_MIpenalty_forOptimization}(a),
the GME models for $R = \|\cdot\|_{2,1}$ and $R = \|\cdot\|_{1}$ are solved by the iterative algorithm
given in \cite{Abe:LiGME},
and the convexly regularized models are solved by the iterative algorithm given in \cite{Kuroda:BlockSparse}.
We terminate the iteration when the norm of the differences between the variables of successive iterates is below the threshold $10^{-4}$ or the number of iterations reaches $10000$.
For the GME-LOP-$\ell_{2}/\ell_{1}$ model,
the computational complexity of Algorithm \ref{alg:Optimization_GME_MI} per iteration
is $\mathcal{O}(dn)$ for matrix-vector multiplication
and $\mathcal{O}(n)$ for the proximity operators (see \ref{appendix:ComputeProxOp}).
This complexity is identical to those of the other algorithms.

We compare the models
in terms of normalized mean square error (NMSE)
\begin{align*}
\frac{\|\bm{x}_{\text{\rm org}}-\bm{x}^{\star}\|^2}{\|\bm{x}_{\text{\rm org}}\|^2},
\end{align*}
where $\bm{x}^{\star}$ is an optimal solution of each regularization model.
Note that the regularization parameter $\lambda$ and other tuning parameters
are adjusted to obtain the best estimation accuracy
for each pair of models and experimental conditions.
The tuning parameters other than $\lambda$ are as follows:
$\alpha$ for GME-LOP-$\ell_{2}/\ell_{1}$ and LOP-$\ell_{2}/\ell_{1}$,
block-size for GME-$\ell_{2}/\ell_{1}$ and $\ell_{2}/\ell_{1}$, and
$\theta$ for the overall convexity conditions of GME-LOP-$\ell_{2}/\ell_{1}$, GME-$\ell_{2}/\ell_{1}$, and GME-$\ell_{1}$.
In Figs.~\ref{fig:NMSE_BlockSparse}(a) and (b), NMSE is plotted
with respect to the number, $d$, of measurements
and the SNR
$E[\|\bm{A}\bm{x}_{\text{\rm org}}\|^2]/E[\|\bm{\varepsilon}\|^2]$, respectively,
where the results are averaged over $100$ independent trials.
As is clear from Fig.~\ref{fig:NMSE_BlockSparse},
the GME-LOP-$\ell_{2}/\ell_{1}$ model
achieves the best estimation accuracy, representing a
significant improvement over the LOP-$\ell_{2}/\ell_{1}$ model.

We also discuss an example involving the original block-sparse signal and the estimates for a trial in
Fig.~\ref{fig:trial_experiment_BlockSparse}.
The GME-LOP-$\ell_{2}/\ell_{1}$ model certainly remedies the underestimation tendency of the LOP-$\ell_{2}/\ell_{1}$ model.
Meanwhile, the $\ell_1$ and $\ell_2/\ell_1$ models
yield multiple incorrect nonzero components, possibly due to their fixed structures
in contrast to the (GME-)LOP-$\ell_{2}/\ell_{1}$ model that automatically optimizes the block structure.
This drawback is not resolved by the GME-$\ell_1$ and GME-$\ell_{2}/\ell_{1}$ models,
while mitigating the underestimation tendency.

\subsection{Estimation of Piecewise Linear Signals}
\label{sect:Experiment_piecewiselinear}
We conduct numerical experiments for the scenario in Examples \ref{exmp:MI_penalties}--\ref{exmp:GME_MI_penalties}(b)
to illustrate the estimation of piecewise linear signals.
Note that we adopt this scenario because
the most suitable model for
the second-order TGV penalty is piecewise linear.
We set $\bm{x}_{\text{\rm org}} \in \mathcal{C}\coloneqq  
[-1, 1]^n$ as the piecewise linear signal indicated by the dotted line in Fig.~\ref{fig:trial_experiment_PiecewiseLinear},
where $n = 128$.
Measurements are generated in the same way as in the previous section.

\begin{figure}[t]
  \centering
    \includegraphics[width=0.9\columnwidth]{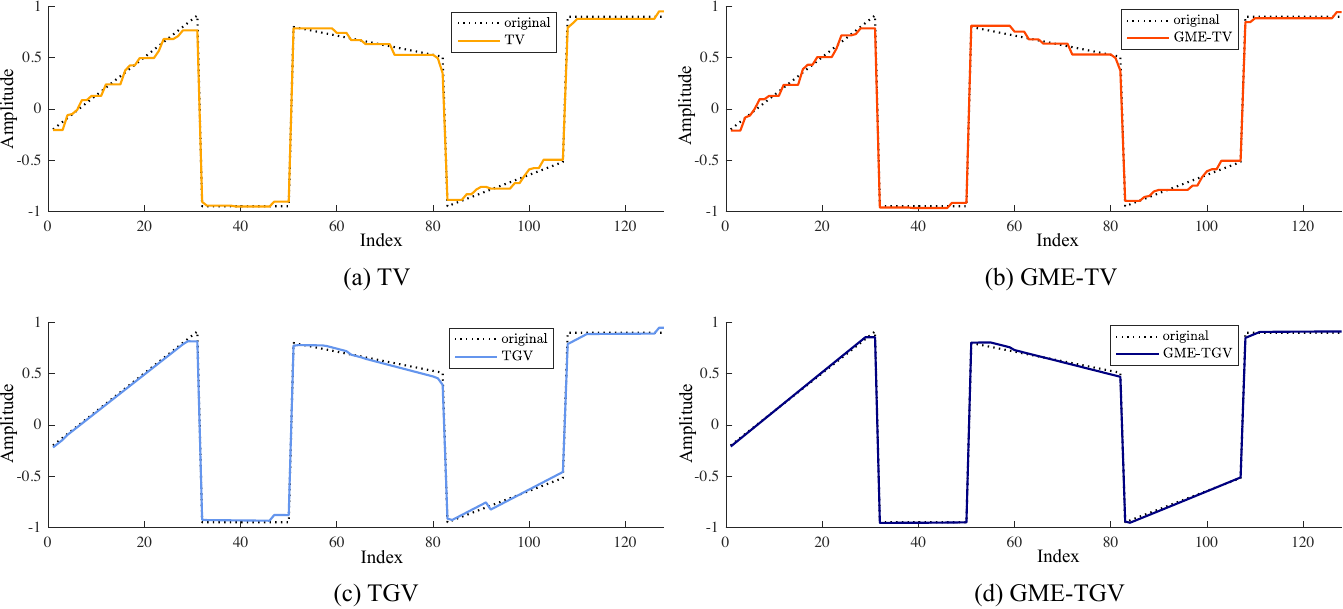}
  \caption{Original piecewise linear signal and the estimates for a trial, where the number of measurements is $100$ and the SNR is $20$dB.}
  \label{fig:trial_experiment_PiecewiseLinear}
\end{figure}
\begin{figure}[t]
\centering
\subfloat[\scriptsize  NMSE versus the number of measurements, where the SNR is fixed to $30$dB.]{\includegraphics[width=0.4\columnwidth]{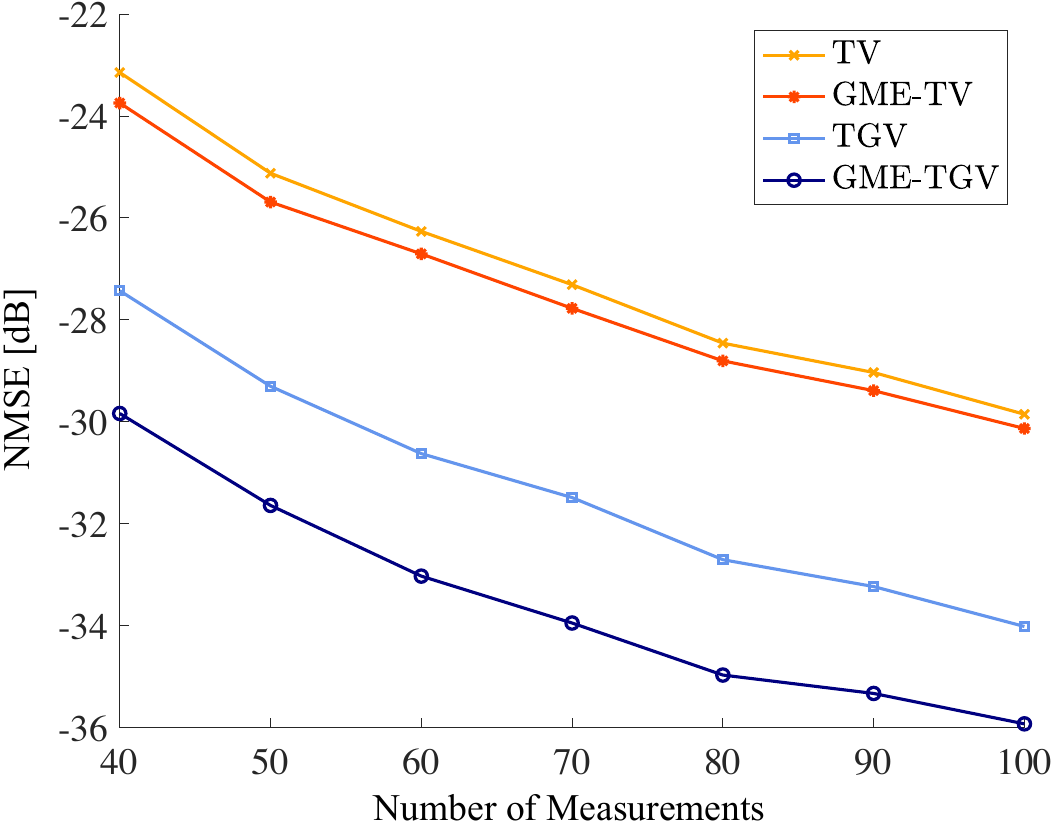}}
\hspace{20pt}
\centering
\subfloat[\scriptsize  NMSE versus the SNR, where the number of measurements is fixed to $100$.]{\includegraphics[width=0.4\columnwidth]{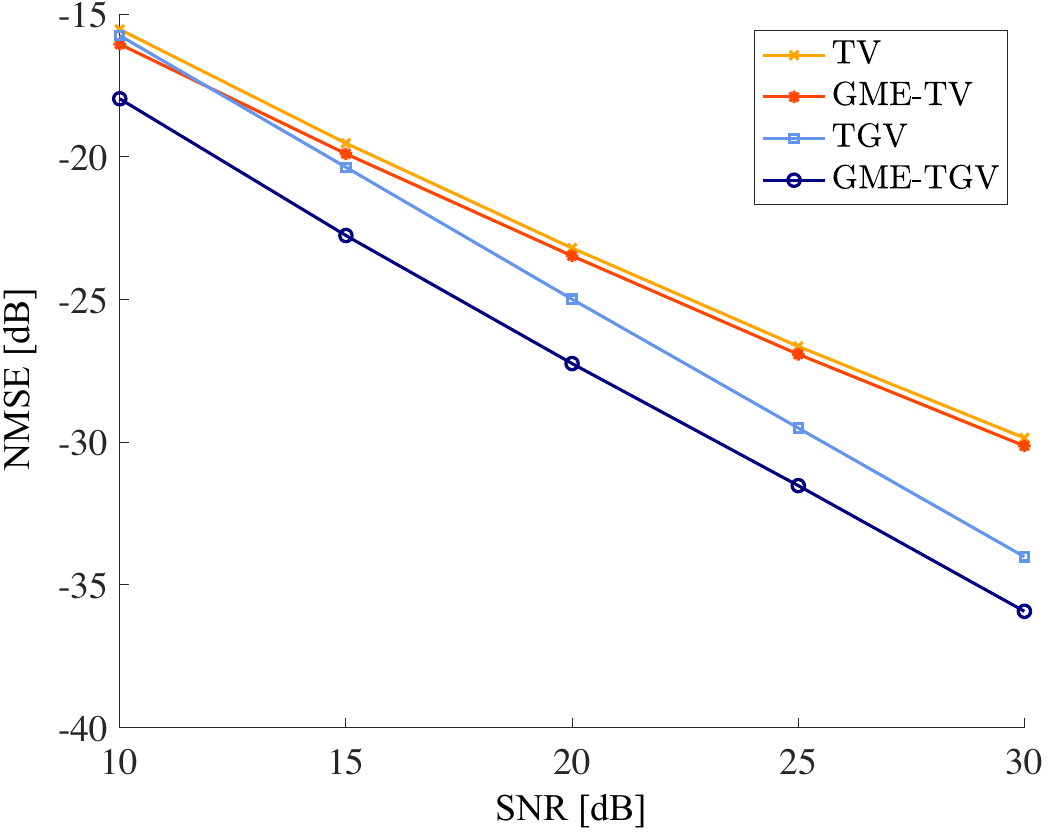}}
\caption{Average NMSEs for estimation of piecewise linear signals.}
\label{fig:NMSE_PiecewiseLinear}
\end{figure}

As an instance of the GME-MI model,
the (second-order) GME-TGV penalty $\Psi_{\bm{B}, \alpha}^{\text{\rm TGV}}$
presented in Example \ref{exmp:GME_MI_penalties}(b) is used in Definition \ref{defn:GMEMI_Regularization_Model},
where we set $\bm{L} = \bm{D}_{\text{\rm 1d}}$ with $\bm{D}_{\text{\rm 1d}}\in\mathbb{R}^{(n-1)\times n}$
defined as $\bm{D}_{\text{\rm 1d}}\colon\bm{x}\mapsto(x_{i+1}-x_{i})_{i=1}^{n-1}$.
In this case, $\tilde{\bm{D}}$ in the seed function $\varphi_{\alpha}^{\text{\rm TGV}}$ in \eqref{eq:def:seed_function_TGV}
becomes $\bm{D}_{\text{\rm 1d}}^{\top}$.
Since only $\bm{B}^{\top}\bm{B}$ is used in Algorithm \ref{alg:Optimization_GME_MI},
we set $\bm{B}^{\top}\bm{B}$ satisfying \eqref{eq:OverallConvexCondition} for $\bm{L} = \bm{D}_{\text{\rm 1d}}$ as follows. We have the equality
\begin{align*}
\underbrace{\begin{bmatrix}
-1&  1 & &  \\
  & \ddots &\ddots &  \\
   & & -1 & 1
\end{bmatrix}}_{=\bm{D}_{\text{\rm 1d}}\in\mathbb{R}^{(n-1) \times n}}
\underbrace{\begin{bmatrix}
1&   & &  \\
1& 1  & &  \\
 \vdots& \vdots &\ddots &  \\
  1&1 & \cdots & 1
\end{bmatrix}}_{\eqqcolon \bm{S}_{\text{\rm 1d}}\in\mathbb{R}^{n \times n}} =
\begin{bmatrix}
\bm{0} & \bm{I}
\end{bmatrix}.
\end{align*}
Thus, based on \cite[Theorem 1]{Chen:GMEmat} (see also Remark \ref{rmrk:Design_GMEMat_OverallConvex}),
letting $\bm{h} \in \mathbb{R}^{d}$ and $\bm{H} \in \mathbb{R}^{d \times (n-1)}$ such that
$\begin{bmatrix}\bm{h} &\bm{H}\end{bmatrix} \coloneqq  \bm{A}\bm{S}_{\text{\rm 1d}}$,
we can show that
\begin{align*}
\bm{B}^{\top}\bm{B} = \frac{\theta}{\lambda}\bm{H}^{\top}(\bm{I} - \bm{h}\bm{h}^{\dagger})\bm{H} \in \mathbb{R}^{(n-1)\times(n-1)}
\end{align*}
with any $\theta \in [0, 1]$
satisfies \eqref{eq:OverallConvexCondition},
where $\bm{h}^{\dagger} = \bm{h}^{\top}/\|\bm{h}\|^2$ if $\bm{h} \neq \bm{0}$
and $\bm{h}^{\dagger} = \bm{0}^{\top}$ if $\bm{h} = \bm{0}$.

We compare the proposed GME-TGV penalty with
the TGV penalty given in Example \ref{exmp:MI_penalties}(b)
to investigate the effectiveness of enhancement by the proposed framework.
For reference, we also show
the GME-TV penalty $(\|\cdot\|_1)_{\bm{B}}\circ\bm{D}_{\text{\rm 1d}}$ presented in \cite{Abe:LiGME}
and the TV penalty $\|\cdot\|_1\circ\bm{D}_{\text{\rm 1d}}$.
All the penalties are combined with the square error $(1/2)\|\bm{y}-\bm{A}\bm{x}\|^2$.

The GME-TGV model is solved by Algorithm \ref{alg:Optimization_GME_MI}
with the representation in Example \ref{exmp:representation_MIpenalty_forOptimization}(b),
the GME-TV model is solved by the iterative algorithm given in \cite{Abe:LiGME}
with an extension \cite{Kitahara:LiGME_MRI} to handle the constraint
$[-1, 1]^n$,
and the TGV and TV models are solved as the special cases with $\bm{B} = \bm{O}$
of the GME-TGV and GME-TV models, respectively.
We terminate the iteration in the same way as in the previous section.
For the GME-TGV model, the computational complexity of Algorithm \ref{alg:Optimization_GME_MI} per iteration
is $\mathcal{O}(dn)$ for matrix-vector multiplication,
$\mathcal{O}(n)$ for the projection $P_{[-1, 1]^n}$,
and $\mathcal{O}(n)$ for the proximity operators (see \ref{appendix:ComputeProxOp}).
This complexity is identical to that of the algorithm for GME-TV.

In Fig.~\ref{fig:NMSE_PiecewiseLinear}, we compare the models in terms of NMSE,
where the results are averaged over $100$ independent trials, and
the regularization parameter $\lambda$ and other tuning parameters
are adjusted to yield the best estimation accuracy for each pair of models and experimental conditions.
The tuning parameters other than $\lambda$ are as follows:
$\alpha$ for GME-TGV and TGV, and
$\theta$ for the overall convexity conditions of GME-TGV and GME-TV.
As is apparent from Fig.~\ref{fig:NMSE_PiecewiseLinear}, the GME-TGV model exhibits the best estimation accuracy,
with a remarkable improvement over the TGV model.

We also discuss examples of the estimates 
for a trial in Fig.~\ref{fig:trial_experiment_PiecewiseLinear}.
In Fig.~\ref{fig:trial_experiment_PiecewiseLinear}(a) for the TV model,
we observe the so-called stair-casing effect.
The GME-TV model does not resolve the stair-casing effect in the slanted segments in Fig.~\ref{fig:trial_experiment_PiecewiseLinear}(b),
while recovering signal discontinuity more effectively than the TV model.
The TGV model depicted in Fig.~\ref{fig:trial_experiment_PiecewiseLinear}(c)
mostly does not exhibit the stair-casing effect, but underestimates the discontinuous
jumps of the signal and its first derivative.
In Fig.~\ref{fig:trial_experiment_PiecewiseLinear}(d),
we observe that the GME-TGV model significantly mitigates the underestimation of the jumps
of the signal and its first derivative, while being free from the stair-casing effect.

\section{Conclusion}
\label{sect:Conclusion}
This paper presented a convex-nonconvex framework to mitigate the underestimation tendencies of MI penalties. We designed the GME-MI penalty, where the generalized Moreau envelope of the MI penalty is subtracted from it.
While the GME-MI penalty is nonconvex in general, we derived an overall convexity condition for the GME-MI regularized least-squares model. Moreover, we presented a proximal splitting algorithm guaranteed to converge to a globally optimal solution of the GME-MI model under the overall convexity condition. Numerical examples considered in the enhancement scenarios of the LOP-$\ell_2/\ell_1$ and TGV penalties demonstrate the effectiveness of the proposed framework.

\section*{Acknowledgment}
This work was supported in part by the Japan Society for the Promotion of Science under Grants-in-Aid 21K17827, 21H01592, and 24H00272.
The author would like to thank Isao Yamada, Daichi Kitahara, Yi Zhang, and Wataru Yata
for their helpful comments on the original manuscript.
The author would also like to thank the anonymous reviewers for their valuable comments.

\appendix

\section{Validity of Assumption \ref{Assumption:phi_Coresive_Proper_BoundedBelow}}
\label{proof:validity_phi_CoresiveAndProper}
We prove that Assumption \ref{Assumption:phi_Coresive_Proper_BoundedBelow}
automatically hold for the seed functions presented in Example \ref{exmp:MI_penalties} as follows.

{\it For Example \ref{exmp:MI_penalties}(a):}
From the definition \eqref{eq:def:seed_function_LOPl2l1} with \eqref{eq:def:perspective_quadratic_func}
and \eqref{eq:def:indicator_func_l1_ball}, it is clear that
$\varphi_{\alpha}^{\text{\rm LOP}}$ is bounded below
for any $\alpha \in \mathbb{R}_{+}$.
Since $\bm{D}$ is the discrete difference operator, for any constant vector $\bm{c} = (c,c,\ldots,c) \in \mathbb{R}^{m}$,
we have $\bm{D}\bm{c} = \bm{0}$, and thus $\iota_{B_1^{\alpha}}(\bm{D}\bm{c}) = 0$
holds
for any $\alpha \in \mathbb{R}_{+}$ by the definition \eqref{eq:def:indicator_func_l1_ball}.
Meanwhile, by the definition \eqref{eq:def:perspective_quadratic_func}, $\sum_{i=1}^{m}h(u_i,c)$ takes a finite value for any
$\bm{u} \in \mathbb{R}^{m}$ and $c \in \mathbb{R}_{++}$.
Altogether,
$\varphi_{\alpha}^{\text{\rm LOP}}(\bm{u},\cdot)$ is proper
for every $\bm{u} \in \mathbb{R}^{m}$ and $\alpha \in \mathbb{R}_{+}$.
Since $h$ is coercive by \cite[Lemma 1]{Kuroda:BlockSparse}
and $\iota_{B_1^{\alpha}}$ is bounded below for any $\alpha \in \mathbb{R}_{+}$,
$\varphi_{\alpha}^{\text{\rm LOP}}(\bm{u},\cdot)$ is coercive for every $\bm{u} \in \mathbb{R}^{m}$ and $\alpha \in \mathbb{R}_{+}$.

{\it For Example \ref{exmp:MI_penalties}(b):}
From the definition \eqref{eq:def:seed_function_TGV}, it is clear that $\varphi_{\alpha}^{\text{\rm TGV}}$ is bounded below
for any $\alpha \in (0,1)$.
Due to
$\mathrm{dom}(\|\cdot\|_{2,1}^{\mathcal{G}_{1}}) = \mathbb{R}^{m}$
and $\mathrm{dom}(\|\cdot\|_{2,1}^{\mathcal{G}_{2}}) = \mathbb{R}^{p}$,
$\varphi_{\alpha}^{\text{\rm TGV}}(\bm{u},\cdot)$
is proper for every $\bm{u} \in \mathbb{R}^{m}$ and $\alpha \in (0,1)$.
Since $\|\cdot\|_{2,1}^{\mathcal{G}_{1}}$ is a norm, we have
\begin{align*}
\|\bm{u}-\bm{\sigma}\|_{2,1}^{\mathcal{G}_{1}} \geq \bigl|\|\bm{\sigma}\|_{2,1}^{\mathcal{G}_{1}} - \|\bm{u}\|_{2,1}^{\mathcal{G}_{1}}\bigr|,
\end{align*}
which implies that $\|\bm{u}-\cdot\|_{2,1}^{\mathcal{G}_{1}}$ is coercive for any fixed $\bm{u} \in \mathbb{R}^{m}$.
Thus, since $\|\tilde{\bm{D}}\cdot\|_{2,1}^{\mathcal{G}_{2}}$ is bounded below,
$\varphi_{\alpha}^{\text{\rm TGV}}(\bm{u},\cdot)$ is coercive for every $\bm{u} \in \mathbb{R}^{m}$ and $\alpha \in (0,1)$.

\section{Validity of Assumption \ref{asmp:form_seed_func_for_optimization}}
\label{appendix:ComputeProxOp}
We show that efficient exact methods are available for computing the proximity operators of $f$ and $g$ in Example \ref{exmp:representation_MIpenalty_forOptimization}
used to represent the seed functions in the form of \eqref{eq:represent_seed_function}.

{\it For Example \ref{exmp:representation_MIpenalty_forOptimization}(a):}
Since $f(\bm{u},\bm{\sigma}) = \sum_{i=1}^{m}h(u_i,\sigma_i)$
is the separable sum of $h$ in \eqref{eq:def:perspective_quadratic_func},
based on \cite[Example 2.4]{Combettes:PerspectiveML},
for any $(\bm{u},\bm{\sigma}) \in \mathbb{R}^{m}\times\mathbb{R}^{m}$ and $\gamma \in \mathbb{R}_{++}$,
we have $\mathrm{prox}_{\gamma f}(\bm{u},\bm{\sigma})
= (\mathrm{prox}_{\gamma h}(u_i,\sigma_i))_{i=1}^{m}$ with
\begin{align*}
\mathrm{prox}_{\gamma h}(u_i,\sigma_i)
=\begin{dcases}
(0, 0), &\text{\rm if }2\gamma\sigma_{i}+u_{i}^2 \leq \gamma^2;\\
\left(0, \sigma_i-\frac{\gamma}{2}\right), &\mbox{if } u_{i} = 0 \text{ {\rm and} }\sigma_{i} > \frac{\gamma}{2};\\
\left(u_{i} - \gamma t_{i} \frac{u_{i}}{|u_{i}|},\sigma_{i}+\gamma\frac{t_{i}^2-1}{2}\right), &\text{\rm otherwise},
\end{dcases}
\end{align*}
where $t_{i} \in \mathbb{R}_{++}$ is the unique positive root of 
\begin{align*}
t_{i}^3 + \left(\frac{2}{\gamma}\sigma_{i}+1\right)t_{i}-\frac{2}{\gamma} |u_{i}| = 0,
\end{align*}
which can be solved using Cardano's formula as follows \cite{Kuroda:BlockSparse,Bauschke:RootCubic}.
Let $p_{i} = 2\sigma_{i}/\gamma+1$ and $\Delta_{i} = u_{i}^2/\gamma^2+p_{i}^3/27$. Then,
\begin{align*}
t_{i} = \begin{dcases}
\sqrt[3]{\frac{|u_{i}|}{\gamma} + \sqrt{\Delta_{i}}} + \sqrt[3]{\frac{|u_{i}|}{\gamma} - \sqrt{\Delta_{i}}}, &
\text{{\rm if} } \Delta_{i} \geq 0;\\
2\sqrt{-\frac{p_{i}}{3} } \cos\left(\frac{\arctan(\gamma\sqrt{-\Delta_{i}}/|u_{i}|) }{3} \right), &\text{{\rm if} } \Delta_{i} < 0,
\end{dcases}
\end{align*}
where $\sqrt[3]{\cdot}$ denotes the real cubic root.
Altogether, $\mathrm{prox}_{\gamma f}$ for Example \ref{exmp:representation_MIpenalty_forOptimization}(a) can be computed with
$\mathcal{O}(m)$ operations.

Since $\iota_{B_1^{\alpha}}$ in \eqref{eq:def:indicator_func_l1_ball} is the indicator function of the $\ell_1$
norm ball,
$\mathrm{prox}_{\gamma \iota_{B_1^{\alpha}}}$ with any $\gamma \in \mathbb{R}_{++}$ reduces
to the $\ell_1$ ball projection $P_{B_1^{\alpha}}$.
There exist algorithms for computing $P_{B_1^{\alpha}}$ with
$\mathcal{O}(p)$ expected complexity (see, e.g., \cite{Condat:l1ballprojection}).

{\it For Example \ref{exmp:representation_MIpenalty_forOptimization}(b):}
We can represent $f(\bm{u},\bm{\sigma}) = \alpha\|\bm{u}-\bm{\sigma}\|_{2,1}^{\mathcal{G}_{1}}$
as $f = \alpha\|\cdot \|_{2,1}^{\mathcal{G}_{1}} \circ \bm{U}$
with $\bm{U}\coloneqq \begin{bmatrix}\bm{I} & -\bm{I} \end{bmatrix} \in \mathbb{R}^{m\times 2m}$.
Since $\bm{U}\bm{U}^{\top} = 2\bm{I}$,
based on \cite[Proposition 24.14]{BC:ConvexAnalysis},
for any $(\bm{u},\bm{\sigma}) \in \mathbb{R}^{m}\times\mathbb{R}^{m}$, $\gamma \in \mathbb{R}_{++}$,
and $\alpha \in (0,1)$,
we have
\begin{align*}
\mathrm{prox}_{\gamma f}(\bm{u},\bm{\sigma}) 
= \frac{1}{2}\left(\bm{u}+\bm{\sigma}+\mathrm{prox}_{2 \gamma \alpha\|\cdot \|_{2,1}^{\mathcal{G}_{1}}}(\bm{u}-\bm{\sigma}),\bm{u}+\bm{\sigma}-\mathrm{prox}_{2 \gamma \alpha\|\cdot \|_{2,1}^{\mathcal{G}_{1}}}(\bm{u}-\bm{\sigma})\right).
\end{align*}
Since $\mathcal{G}_{1} = (\mathcal{I}_j)_{j=1}^{r}$ is a partition,
the proximity operator of
$\kappa\|\cdot\|_{2,1}^{\mathcal{G}_{1}}$ with $\kappa = 2 \gamma \alpha$ can be computed for any
$\bm{w} \in \mathbb{R}^{m}$ by
\begin{align*}
[\mathrm{prox}_{\kappa\|\cdot\|_{2,1}^{\mathcal{G}_1}}(\bm{w})]_{\mathcal{I}_j}
= \left(1-\frac{\kappa}{\max\{\kappa, \|\bm{w}_{\mathcal{I}_j}\|_2\}}\right)\bm{w}_{\mathcal{I}_j}
\end{align*}
for each $j = 1,\ldots,r$. The proximity operator of $g = (1-\alpha) \|\cdot\|_{2,1}^{\mathcal{G}_{2}}$
can be computed similarly.
Altogether, $\mathrm{prox}_{\gamma f}$
and $\mathrm{prox}_{\gamma g}$
for Example \ref{exmp:representation_MIpenalty_forOptimization}(b)
 can be computed with
$\mathcal{O}(m)$
and $\mathcal{O}(p)$
operations, respectively.

\section{Validity of Assumption \ref{Assumption:QualificationConditions}}
\label{proof:QualificationConditions}
We prove that Assumption \ref{Assumption:QualificationConditions}
is
automatically satisfied for the seed functions in Example \ref{exmp:MI_penalties} with their representations in Example \ref{exmp:representation_MIpenalty_forOptimization} as follows.

{\it For Example \ref{exmp:MI_penalties}(a) with Example \ref{exmp:representation_MIpenalty_forOptimization}(a):}
By the definition \eqref{eq:def:seed_function_LOPl2l1} with \eqref{eq:def:perspective_quadratic_func}
and \eqref{eq:def:indicator_func_l1_ball},
Assumption \ref{Assumption:QualificationConditions}(i)(a) is clear.
From the definitions \eqref{eq:def:seed_function_LOPl2l1} and \eqref{eq:def:L_bar}, we obtain
\begin{align*}
\mathrm{dom}(\varphi_{\alpha}^{\text{\rm LOP}})- \mathrm{ran}(\bar{\bm{L}})
\supset (\mathbb{R}^{m}\times S_{\alpha}) - 
(\mathrm{ran}(\bm{L})\times\mathbb{R}^{m}),
\end{align*}
where $S_{\alpha} \coloneqq  \mathbb{R}^{m}_{++}\cap\{\bm{\sigma}\in\mathbb{R}^{m}\mid\|\bm{D}\bm{\sigma}\|_1 \leq \alpha\}$.
We have $S_{\alpha} \neq \varnothing$ for any $\alpha \in \mathbb{R}_{+}$ because
$\bm{D}\bm{c} = \bm{0}$ holds for any $\bm{c} = (c,c,\ldots,c) \in \mathbb{R}_{++}^{m}$,
as $\bm{D}$ is the discrete difference operator.
Thus, for any $\alpha \in \mathbb{R}_{+}$,
we have
$\mathrm{dom}(\varphi_{\alpha}^{\text{\rm LOP}})- \mathrm{ran}(\bar{\bm{L}}) = \mathbb{R}^{m}\times\mathbb{R}^{m}$,
implying that Assumption \ref{Assumption:QualificationConditions}(ii) holds.
From the definition \eqref{eq:def:M_bar} and the setting of Example \ref{exmp:representation_MIpenalty_forOptimization}(a), we obtain
\begin{align*}
\mathrm{dom}(g) -\bar{\bm{M}}(\mathrm{dom}(f)) = B_{1}^{\alpha} - \bm{D}(\mathbb{R}_{+}^{m}) = B_{1}^{\alpha} - \mathrm{ran}(\bm{D}),
\end{align*}
where $B_{1}^{\alpha} \coloneqq  \{\bm{\xi}\in\mathbb{R}^{p}\mid\|\bm{\xi}\|_1 \leq \alpha\}$,
and the last equality follows from the relation $\bm{D}(\mathbb{R}_{+}^{m}) = \mathrm{ran}(\bm{D})$. This relation holds because, for any $\bm{\sigma} \in \mathbb{R}^{m}$,
$\bm{D}\bm{\sigma} = \bm{D}(\bm{\sigma}+\bm{c})$
and $\bm{\sigma}+\bm{c} \in \mathbb{R}_{+}^{m}$
can be satisfied
by setting $\bm{c} =  (c,c,\ldots,c) \in \mathbb{R}_{+}^{m}$ with $c \geq \max \{|\sigma_i|\}_{i=1}^{m}$.
For any $\alpha \in \mathbb{R}_{+}$,
since $B_{1}^{\alpha} - \mathrm{ran}(\bm{D})$
is a nonempty symmetric convex set, Assumption \ref{Assumption:QualificationConditions}(iii) holds by \cite[Example 6.10]{BC:ConvexAnalysis}.

{\it For Example \ref{exmp:MI_penalties}(b) with Example \ref{exmp:representation_MIpenalty_forOptimization}(b):}
Clear from the definition \eqref{eq:def:seed_function_TGV},
$\mathrm{dom}(\|\cdot\|_{2,1}^{\mathcal{G}_{1}}) = \mathbb{R}^{m}$, 
$\mathrm{dom}(\|\cdot\|_{2,1}^{\mathcal{G}_{2}}) = \mathbb{R}^{p}$,
and the setting of Example \ref{exmp:representation_MIpenalty_forOptimization}(b).

\section{Proof of Positive Definiteness of \texorpdfstring{$\bm{P}$}{P}}
\label{proof:positive_definite_P}
Using $\bm{\Pi}$ in \eqref{eq:def:permutationOp},
we can represent $\bm{P}$ in \eqref{eq:def:positiveSelfAdjOp} as
\begin{align}
\label{eq:representPositiveSelfAdjOpByPermuteBlockDiag}
\bm{P} = \bm{\Pi}^{\top}\begin{bmatrix}
\bm{P}_1 &  &\\
 & \bm{P}_2 & \\
 &  & \bm{P}_3
\end{bmatrix}\bm{\Pi},
\end{align}
where
\begin{align*}
\bm{P}_{1}&\coloneqq \begin{bmatrix}
(1/\gamma_1)\bm{I} & -\lambda\bm{L}^{\top}\bm{B}^{\top}\bm{B} & -\lambda\bm{L}^{\top}\\
-\lambda \bm{B}^{\top}\bm{B}\bm{L} & (\lambda/\gamma_3)\bm{I} & \bm{O}\\
-\lambda \bm{L} & \bm{O} & \lambda \bm{I}
\end{bmatrix},\\
\bm{P}_{2}&\coloneqq \lambda\begin{bmatrix}
(1/\gamma_2)\bm{I} & -\bm{I} & -\bm{M}^{\top}\\
- \bm{I} & \bm{I} & \bm{O}\\
-\bm{M} & \bm{O} & \bm{I}
\end{bmatrix},\\
\bm{P}_{3}&\coloneqq \lambda\begin{bmatrix}
(1/\gamma_3)\bm{I} &  -\bm{M}^{\top}\\
-\bm{M} & (1/\gamma_4)\bm{I}
\end{bmatrix}.
\end{align*}
Since $\bm{\Pi}$ is the permutation matrix,
we have
\begin{align*}
\bm{P} \succ \bm{O} \Leftrightarrow \bm{P}_1 \succ \bm{O}, \bm{P}_2\succ \bm{O}, \bm{P}_3\succ \bm{O}.
\end{align*}
The property of the Schur complement \cite[Theorem 7.7.7]{HJ:MatrixAnalysis} yields
\begin{align*}
\bm{P}_2 \succ \bm{O} &\Leftrightarrow \frac{1}{\gamma_2}\bm{I} -\bm{I}-\bm{M}^{\top}\bm{M} \succ \bm{O},\\
\bm{P}_3 \succ \bm{O} &\Leftrightarrow \frac{1}{\gamma_4}\bm{I} -\gamma_3\bm{M}\bm{M}^{\top} \succ \bm{O},
\end{align*}
and thus $\bm{P}_2 \succ \bm{O}$ and $\bm{P}_3 \succ \bm{O}$ are
guaranteed by the second and the fourth conditions in \eqref{eq:cond_stepSizeLikeParam}, respectively.
Applying the property of the Schur complement to $\bm{P}_1$, we obtain
\begin{align*}
\bm{P}_1 \succ \bm{O} \Leftrightarrow {}&
\frac{1}{\gamma_1}\bm{I}
- \lambda\gamma_3\bm{L}^{\top}(\bm{B}^{\top}\bm{B})^2\bm{L}
-\lambda\bm{L}^{\top}\bm{L} \succ \bm{O}\\
\Leftrightarrow {}&
\frac{1}{\gamma_1}\bm{I}-\frac{\kappa}{2}\bm{A}^{\top}\bm{A}-\lambda\bm{L}^{\top}\bm{L}+\frac{\kappa}{2}\bm{A}^{\top}\bm{A}-\lambda\gamma_3\bm{L}^{\top}(\bm{B}^{\top}\bm{B})^2\bm{L}\succ \bm{O}.
\end{align*}
Since $(1/\gamma_1)\bm{I}-(\kappa/2)\bm{A}^{\top}\bm{A}-\lambda\bm{L}^{\top}\bm{L} \succ \bm{O}$
is guaranteed by the first condition in \eqref{eq:cond_stepSizeLikeParam},
it suffices to prove $(\kappa/2)\bm{A}^{\top}\bm{A}-\lambda\gamma_3\bm{L}^{\top}(\bm{B}^{\top}\bm{B})^2\bm{L}\succeq \bm{O}$.
Since the third condition in \eqref{eq:cond_stepSizeLikeParam} implies
$\gamma_3\|\bm{B}\|_{\mathrm{op}}^2 \leq  \kappa/2$,
for any $\bm{x} \in \mathbb{R}^{n}$, we obtain
\begin{align*}
\langle\bm{x},\gamma_3\bm{L}^{\top}(\bm{B}^{\top}\bm{B})^2&\bm{L}\bm{x} \rangle
=\gamma_3\|\bm{B}^{\top}\bm{B}\bm{L}\bm{x}\|^2
\leq
\gamma_3\|\bm{B}\|_{\mathrm{op}}^2\|\bm{B}\bm{L}\bm{x}\|^2
\leq \frac{\kappa}{2}\|\bm{B}\bm{L}\bm{x}\|^2.
\end{align*}
Thus, we have
\begin{align*}
(\forall \bm{x} \in \mathbb{R}^{n})\quad\left\langle\bm{x},
\left(\frac{\kappa}{2}\bm{A}^{\top}\bm{A}-\lambda\gamma_3\bm{L}^{\top}(\bm{B}^{\top}\bm{B})^2\bm{L}\right)\bm{x} \right\rangle
\geq \frac{\kappa}{2}\langle\bm{x},(\bm{A}^{\top}\bm{A} - \lambda \bm{L}^{\top}\bm{B}^{\top}\bm{B}\bm{L})\bm{x} \rangle \geq 0,
\end{align*}
where the last inequality follows from the condition \eqref{eq:OverallConvexCondition}.

\section{Proof of Averaged Nonexpansiveness of \texorpdfstring{$\mathrm{Id}-\bm{P}^{-1}\circ H$}{Id-P-1H}}
\label{proof:AveragedNonexpansive_AffineOp_H}
To prove the $1/\kappa$-averaged nonexpansiveness of $\mathrm{Id}-\bm{P}^{-1}\circ H$,
it suffices to prove that $\mathrm{Id}-\kappa\bm{P}^{-1}\circ H$ is nonexpansive, in $(\mathcal{H},\langle\cdot,\cdot\rangle_{\bm{P}},\|\cdot\|_{\bm{P}})$.
Define $\bm{R}\in\mathbb{R}^{(n+2m)\times(n+2m)}$ by
\begin{align*}
\bm{R} \coloneqq  \begin{bmatrix}
\bm{A}^{\top}\bm{A} - \lambda \bm{L}^{\top}\bm{B}^{\top}\bm{B}\bm{L} &  &\\
 & \lambda \bm{B}^{\top}\bm{B} & \\
 &  & \bm{O}
\end{bmatrix}.
\end{align*}
Then, $\bm{R} \succeq \bm{O}$ holds by the condition \eqref{eq:OverallConvexCondition}.
For $\bm{z} = (\bm{x},
\bm{\sigma},
\bm{v},
\bm{\tau},
\bm{r},
\bm{\eta},
\bm{\xi},
\bm{\zeta}) \in \mathcal{H}$,
denote $\breve{\bm{z}} \coloneqq  (\bm{x},\bm{v},\bm{r}) \in \mathbb{R}^{n+2m}$.
From the definitions of $H$ and $\bm{\Pi}$ in \eqref{eq:def:AffineOp_H} and \eqref{eq:def:permutationOp}, respectively,
and the expression of $\bm{P}$ in \eqref{eq:representPositiveSelfAdjOpByPermuteBlockDiag},
for any $\bm{z}_1,\bm{z}_2 \in \mathcal{H}$, we have
\begin{align*}
H(\bm{z}_1)-H(\bm{z}_2) &= 
\bm{\Pi}^{\top}(\bm{R}(\breve{\bm{z}}_1-\breve{\bm{z}}_2),\bm{0},\bm{0},\bm{0},\bm{0},\bm{0}),\\
\bm{P}^{-1}[H(\bm{z}_1)-H(\bm{z}_2)] &= \bm{\Pi}^{\top}(\bm{P}_1^{-1}\bm{R}(\breve{\bm{z}}_1-\breve{\bm{z}}_2),\bm{0},\bm{0},\bm{0},\bm{0},\bm{0}).
\end{align*}
Using these expressions, for any $\bm{z}_1,\bm{z}_2 \in \mathcal{H}$, we obtain
\begin{align*}
&\|(\mathrm{Id}-\kappa\bm{P}^{-1}\circ H)(\bm{z}_1)-(\mathrm{Id}-\kappa\bm{P}^{-1}\circ H)(\bm{z}_2)\|_{\bm{P}}^2\\
={}&\|(\bm{z}_1-\bm{z}_2)-\kappa\bm{P}^{-1}[H(\bm{z}_1)-H(\bm{z}_2)]\|_{\bm{P}}^2\\
={}&
\|\bm{z}_1-\bm{z}_2 \|_{\bm{P}}^2-2\kappa\langle\bm{z}_1-\bm{z}_2,H(\bm{z}_1)-H(\bm{z}_2)\rangle
+\kappa^2\langle\bm{P}^{-1}[H(\bm{z}_1)-H(\bm{z}_2)],H(\bm{z}_1)-H(\bm{z}_2)\rangle\\
={}&
\|\bm{z}_1-\bm{z}_2 \|_{\bm{P}}^2-2\kappa\langle\breve{\bm{z}}_1-\breve{\bm{z}}_2,\bm{R}(\breve{\bm{z}}_1-\breve{\bm{z}}_2)\rangle +\kappa^2\langle\bm{P}_1^{-1}\bm{R}(\breve{\bm{z}}_1-\breve{\bm{z}}_2),\bm{R}(\breve{\bm{z}}_1-\breve{\bm{z}}_2) \rangle
\\
={}&\|\bm{z}_1-\bm{z}_2 \|_{\bm{P}}^2
-2\kappa\left\langle\breve{\bm{z}}_1-\breve{\bm{z}}_2, \left(\bm{R}-\frac{\kappa}{2}\bm{R}\bm{P}_1^{-1}\bm{R}\right)(\breve{\bm{z}}_1-\breve{\bm{z}}_2) \right\rangle.
\end{align*}
Thus,
$\mathrm{Id}-\kappa\bm{P}^{-1}\circ H$ is nonexpansive if and only if $\bm{R}-(\kappa/2)\bm{R}\bm{P}_1^{-1}\bm{R} \succeq \bm{O}$.
Let $\bm{R}^{\dagger}$ denote the Moore-Penrose pseudo-inverse of $\bm{R}$.
Since $\bm{R}\bm{R}^{\dagger}\bm{R} = \bm{R}$,
by the property of the generalized Schur complement \cite[Theorem 1.20]{HZ:GeneralizedSchurComplement}, we have
\begin{align*}
\bm{R}-\frac{\kappa}{2}\bm{R}\bm{P}_1^{-1}\bm{R} \succeq \bm{O} \Leftrightarrow 
\begin{bmatrix}
\bm{R}& \bm{R}\\
\bm{R}& \frac{2}{\kappa}\bm{P}_1
\end{bmatrix}\succeq \bm{O} \Leftrightarrow
\bm{P}_1 - \frac{\kappa}{2}\bm{R} \succeq \bm{O}.
\end{align*}
We can rewrite $\bm{P}_1 - (\kappa/2)\bm{R}$ as
\begin{align*}
\bm{P}_1 - \frac{\kappa}{2}\bm{R}
= \begin{bmatrix}
\frac{1}{\gamma_1}\bm{I}-\frac{\kappa}{2}\bm{A}^{\top}\bm{A} & \bm{O} & -\lambda\bm{L}^{\top}\\
\bm{O}& \bm{O} & \bm{O}\\
-\lambda \bm{L} & \bm{O} & \lambda \bm{I}
\end{bmatrix}
+\lambda\begin{bmatrix}
\frac{\kappa}{2}\bm{L}^{\top}\bm{B}^{\top}\bm{B}\bm{L} & -\bm{L}^{\top}\bm{B}^{\top}\bm{B} &\bm{O}\\
 -\bm{B}^{\top}\bm{B}\bm{L}& \frac{1}{\gamma_3}\bm{I} - \frac{\kappa}{2}\bm{B}^{\top}\bm{B} &\bm{O} \\
 \bm{O}&\bm{O}  & \bm{O}
\end{bmatrix},
\end{align*}
and thus it suffices to prove that each term on the right-hand side is positive semidefinite. 
The first term is positive semidefinite
by the first condition in \eqref{eq:cond_stepSizeLikeParam}
and the property of the Schur complement.
For the second term, we have
\begin{align*}
\begin{bmatrix}
\frac{\kappa}{2}\bm{L}^{\top}\bm{B}^{\top}\bm{B}\bm{L} & -\bm{L}^{\top}\bm{B}^{\top}\bm{B}\\
 -\bm{B}^{\top}\bm{B}\bm{L}& \frac{1}{\gamma_3}\bm{I} - \frac{\kappa}{2}\bm{B}^{\top}\bm{B}
\end{bmatrix}
=\begin{bmatrix}
\bm{L}& \\
& \bm{I}
\end{bmatrix}^{\top}
\begin{bmatrix}
\frac{\kappa}{2}\bm{B}^{\top}\bm{B} & -\bm{B}^{\top}\bm{B}\\
 -\bm{B}^{\top}\bm{B}& \frac{1}{\gamma_3}\bm{I} - \frac{\kappa}{2}\bm{B}^{\top}\bm{B}
\end{bmatrix}
\begin{bmatrix}
\bm{L}& \\
& \bm{I}
\end{bmatrix},
\end{align*}
and by the property of the generalized Schur complement,
\begin{align*}
\begin{bmatrix}
\frac{\kappa}{2}\bm{B}^{\top}\bm{B} & -\bm{B}^{\top}\bm{B}\\
 -\bm{B}^{\top}\bm{B}& \frac{1}{\gamma_3}\bm{I} - \frac{\kappa}{2}\bm{B}^{\top}\bm{B}
\end{bmatrix} \succeq \bm{O}
\Leftrightarrow
 \frac{1}{\gamma_3}\bm{I} - \frac{\kappa}{2}\bm{B}^{\top}\bm{B} - \frac{2}{\kappa}\bm{B}^{\top}\bm{B} \succeq \bm{O}.
\end{align*}
Thus, the second term is positive semidefinite by the third condition in \eqref{eq:cond_stepSizeLikeParam}.

\bibliographystyle{elsarticle-num} 

\end{document}